\documentclass[11pt, reqno, a4paper,oneside]{amsart}
\usepackage{graphicx, epsfig, psfrag}
\usepackage{amsfonts,amsmath,amssymb,amsbsy,amsthm}
\usepackage{bm}
\usepackage{color}
\usepackage{float}
\usepackage{hyperref}
\usepackage{mathrsfs}
\usepackage{longtable}

\usepackage[left=2cm,right=2cm,top=2cm,bottom=2cm]{geometry}


\usepackage{environments}
\numberwithin{theorem}{section}
\numberwithin{equation}{section}





\renewcommand{\paragraph}[1]{\subsubsection{#1}}

\renewcommand{\cases}[1]{\left\{ \begin{array}{rl} #1 \end{array} \right.}
\newcommand{\smfrac}[2]{{\textstyle \frac{#1}{#2}}}


\def\XXint#1#2#3{{\setbox0=\hbox{$#1{#2#3}{\int}$ }
\vcenter{\hbox{$#2#3$ }}\kern-.6\wd0}}

\usepackage{accents}
\newlength{\dhatheight}
\newlength{\dtildeheight}
\newcommand{\hhat}[1]{%
    \settoheight{\dhatheight}{\ensuremath{\hat{#1}}}%
    \addtolength{\dhatheight}{-0.35ex}%
    \hat{\vphantom{\rule{1pt}{\dhatheight}}%
    \smash{\hat{#1}}}}
\newcommand{\ttilde}[1]{%
    \settoheight{\dtildeheight}{\ensuremath{\tilde{#1}}}%
    \addtolength{\dtildeheight}{-0.15ex}%
    \tilde{\vphantom{\rule{1pt}{\dtildeheight}}%
    \smash{\tilde{#1}}}}

\def\b{\big}
\def\B{\Big}
\def\bg{\bigg}
\def\Bg{\Bigg}

\def\sep{\,|\,}
\def\bsep{\,\b|\,}



\def\R{\mathbb{R}}
\def\N{\mathbb{N}}
\def\Z{\mathbb{Z}}


\def\dx{\,{\rm d}x}

\def\<{\langle}
\def\>{\rangle}


\def\mA{{\sf A}}
\def\mB{{\sf B}}
\def\mC{{\sf C}}
\def\mF{{\sf F}}
\def\mG{{\sf G}}
\def\mH{{\sf H}}
\def\mI{{\sf I}}

\def\mQ{{\sf Q}}

\def\mO{{\sf 0}}


\def\bfg{{\bm g}}



\newcommand{\transpose}{{\!\top}}

\def\D{\nabla}
\def\del{\delta}
\def\ddel{\delta^2}

\def\eps{\varepsilon}

\def\a{{\rm a}}
\def\c{{\rm c}}
\def\ac{{\rm ac}}
\def\i{{\rm i}}
\def\refl{{\rm rfl}}
\def\qnl{{\rm qnl}}
\def\stab{{\rm stab}}
\def\qce{{\rm qce}}
\def\grac{{\rm g23}}
\def\lref{{\rm lrf}}

\def\L{\Lambda}




\def\Us{\mathscr{W}}
\def\Usz{\Us_0}

\def\E{\mathcal{E}}
\def\Ea{\E^\a}
\def\Ha{H^\a}
\def\Ec{\E^\c}
\def\Hc{H^\c}
\def\Eqnl{\E^\qnl}
\def\Hqnl{H^\qnl}
\def\Erefl{\E^\refl}
\def\Hrefl{H^\refl}
\def\Eac{\E^\ac}
\def\Hac{H^\ac}
\def\Estab{\E^\stab}
\def\Hstab{H^\stab}

\def\dW{W'}
\def\ddW{W''}



%








\def\vsig{\varsigma}
\def\Rg{\mathcal{R}}
\def\tilD{\tilde{D}}
\def\rcut{r_{\rm cut}}

\def\ca{\gamma^\a}
\def\cac{\gamma^\ac}

\def\cqnl{\gamma^\qnl}

\def\La{\L^\a}
\def\Lc{\L^\c}

\def\T{\mathcal{T}}
\def\Ed{\mathcal{E}}
\def\e{\mathrm{\mathbf{e}}}


\definecolor{cocol}{rgb}{0.7, 0, 0}
\definecolor{ascol}{rgb}{0, 0, 0.9}
\definecolor{lzcol}{rgb}{0, 0.7, 0}
\definecolor{todocol}{rgb}{0.0, 0.4, 0.0}

\usepackage{ulem}\normalem

\definecolor{delcol}{rgb}{0.7, 0.0, 0.7}
\newcommand{\delfinal}[1]{{#1}}


\begin{document}


\title[(In-)Stability and Stabilisation of QNL-Type A/C
Methods]{(In-)Stability and Stabilisation of QNL-Type
  Atomistic-to-Continuum Coupling Methods}

\author{C. Ortner}
\address{C. Ortner\\ Mathematics Institute \\ Zeeman Building \\
  University of Warwick \\ Coventry CV4 7AL \\ UK}
\email{christoph.ortner@warwick.ac.uk}

\author{A. V. Shapeev}
\address{A. V. Shapeev \\ School of Mathematics \\ University of
  Minnesota \\ 206 Church St SE \\  Minneapolis \\ MN 55455 \\ USA}
\email{ashapeev@math.umn.edu}

\author{L. Zhang} \address{L. Zhang \\ Department of Mathematics,
  Institute of Natural Sciences, and Ministry of Education Key
  Laboratory of Scientific and Engineering Computing (MOE-LSC) \\
  Shanghai Jiao Tong University \\ 800 Dongchuan Road \\ Shanghai
  200240 \\ China}
\email{lzhang2012@sjtu.edu.cn}

\date{\today}

\thanks{CO and LZ were supported by the EPSRC grant EP/H003096
  ``Analysis of atomistic-to-continuum coupling methods''. AS was
  supported by AFOSR Award FA9550-12-1-0187.}

\subjclass[2000]{65Q99, 74S30, 65N12, 70C20}

\keywords{atomistic-to-continuum coupling, quasinonlocal, stability}

\begin{abstract}
  We study the stability of ghost force-free energy-based
  atomistic-to-continuum coupling methods. In 1D we essentially
  complete the theory by introducing a universally stable a/c coupling
  as well as a stabilisation mechanism for unstable coupling schemes.

  We then present a comprehensive study of a two-dimensional scalar
  planar interface setting, as a step towards a general 2D/3D
  vectorial analysis. Our results point out various new challenges.
  For example, we find that none of the ghost force-free methods known
  to us are universally stable (i.e., stable under general interaction
  and general loads). We then explore to what extent our 1D
  stabilisation mechanism can be extended.
\end{abstract}

\maketitle


\section{Introduction}
Atomistic-to-continuum (a/c) coupling schemes are a class of
computational multiscale methods for the efficient simulation of
crystalline solids in the presence of defects. Different variants have
been among the tools of computional materials science for many decades
\cite{Mullins1982, Fischmeister1989, Ortiz:1995a}. More recently, a
numerical analysis theory of a/c coupling has emerged; we refer to
\cite{acta} for an introduction, a summary of the state of the art,
and extensive references.

While the {\em consistency} theory of a/c coupling methods has a solid
foundation \cite{Or:2011a,OrtnerShapeev2013}, understanding their
stability properties essentially remains an outstanding open
problem. The main difficulty is that the a/c model interface, even if
treated consistently, can generate new eigenmodes present in neither
the atomistic nor continuum model, which can render a/c coupling
methods unstable. Indeed, we emphasize that we are not only concerned
with questions of analysis, but also with the construction of stable
schemes.

In one dimension, an essentially complete survey of stability is
presented in the review article \cite{acta}, which is partially based
on the results of the present article. In dimension greater than one
very little is known.  Some recent progress for 2D force-based
blending methods \cite{2012-MMS-bqcf.stab, 2013-PRE-bqcfcomp} remains
incomplete and partially based on numerical evidence. For a sharp
interface force-based coupling scheme more comprehensive analytical
results are presented in \cite{LuMin201212}, but even these are
restricted to flat a/c interfaces and are dependent upon conditions
that cannot be readily checked analytically.

In the present work we focus on the stability of a particular class of
conservative a/c schemes, generally called quasinonlocal (QNL) type
coupling schemes. In one dimension we present examples of stability
and instability (\S \ref{sec:qnl1d}), construct a new ``universally
stable'' scheme (\S \ref{sec:refl}), and further show how unstable QNL
schemes can be stabilised (\S \ref{sec:stab1d}).

We then consider a two-dimensional model problem, for which our
results are more limited, in that we need to make much more stringent
assumptions on the deformation and interaction potential than in
1D. Within these assumptions, we show that there is a source of
instability in 2D interfaces, which was not present in the 1D setting
(\S\ref{sec:2d:qnlhess_rep}).  Moreover, we show that this instability
is universal. It is not only present in specific instances of QNL type
a/c couplings (Proposition \ref{th:2d:Hqnl_simple}), but in a fairly
wide class of generalized {\it geometric reconstruction} methods
\cite{PRE-ac.2dcorners} (\S \ref{sec:univ_stab_2d}) which cover most
of the existing methods. This new source of instability is more severe
than the instabilities observed in 1D and cannot be ``easily''
stabilised. To be precise, we show that stabilising QNL type schemes
in 2D severely affects their consistency when the system approaches a
bifurcation point (\S\ref{sec:stab2d}).

\clearpage

\section{A General 1D QNL Formulation}

\subsection{Notation for lattice functions}
\label{sec:intro:notation}
For a lattice function $v : \Z \to \R$ and $\rho \in \Z \setminus
\{0\}$, we define the finite difference operators
\begin{displaymath}
  D_\rho v(\xi) := v(\xi+\rho) - v(\xi).
\end{displaymath}
For some finite interaction stencil $\Rg = \{\pm 1, \dots, \pm \rcut
\}$, where $\rcut \in \N$ is a fixed cut-off radius, we define
\begin{displaymath}
  Dv(\xi) := \b( D_\rho v(\xi) \b)_{\rho \in \Rg}.
\end{displaymath}
The space of compact displacements is defined by
\begin{displaymath}
  \Usz := \b\{ u : \Z \to \R \bsep {\rm supp}(u) \text{ is bounded} \b\}.
\end{displaymath}

Each lattice function $v : \Z \to \R$ is identified with its canonical
continuous piecewise affine interpolant. In particular, we define the
gradients $\D v(x) := v(\xi) - v(\xi-1)$ for $x \in (\xi-1, \xi)$. 

If $H : \Usz \to \Usz^*$ is a linear operator (or, $\< H \cdot, \cdot
\>$ a bilinear form on $\Usz$), then we define the associated
stability constant
\begin{displaymath}
  \gamma(H) := \inf_{\substack{ u \in \Usz \\ \| \D u \|_{L^2} = 1 }}
  \< H u, u \>.
\end{displaymath}
We say that $H$ is stable if $\gamma(H) > 0$.

\subsection{Many-body interactions for an infinite chain}
We consider finite-range many-body interactions of deformed
configurations of the infinite chain $\Z$. Let $V \in C^2(\R^{\Rg})$
be the many-body site energy potential with partial derivatives
\begin{displaymath}
  V_\rho(\bfg) := \frac{\partial V(\bfg)}{\partial g_\rho} \quad
  \text{and} \quad
  V_{\rho\vsig}(\bfg) := \frac{\partial^2 V(\bfg)}{\partial
    g_\rho \partial g_\vsig} \quad \text{for } \bfg = (g_\rho)_{\rho\in\Rg} \in \R^\Rg.
\end{displaymath}
We assume that $V$ is invariant under reflections of the local
configuration, that is,
\begin{equation}
  \label{eq:qnl1d:symmV}
  V\b( (g_\rho)_{\rho\in\Rg} \b) = V\b( (-g_{-\rho})_{\rho\in\Rg} \b).
\end{equation}
Immediate consequences of \eqref{eq:qnl1d:symmV} are the symmetries
\begin{equation}
  \label{eq:qnl1d:symm_dV_hV}
  V_{-\rho}(\mF\Rg) = -V_\rho(\mF\Rg) \quad \text{and} \quad
  V_{-\rho,-\vsig}(\mF\Rg) = V_{\rho\vsig}(\mF\Rg) \qquad \forall
  \rho,\vsig\in\Rg, \mF > 0.
\end{equation}

A macroscopic strain $\mF$ and a displacement $u \in \Usz$ induce a
deformed configuration $y(\xi) = \mF \xi + u(\xi)$, $\xi \in \Z$. To
such a configuration we assign the energy difference
\begin{equation}
  \label{eq:defn_Ea}
  \Ea(y) := \sum_{\xi \in \Z} \b[ V(Dy(\xi)) -
  V(\mF\Rg) \b].
\end{equation}
Since the lattice sum is finite, this expression is well-defined. The
first and second variations with respect to $u$ (in the sense of
Gateaux derivatives) are also well-defined and are given by
\begin{align*}
  \<\del\Ea(y), v \> &:= \sum_{\xi\in \Z} \sum_{\rho\in\Rg}
  V_\rho(Dy(\xi)) \cdot D_\rho
  v(\xi), \quad \text{and} \\
  \< \ddel\Ea(y) v, v \> &:= \sum_{\xi\in\Z} \sum_{\rho,\vsig\in\Rg}
  V_{\rho\vsig}(Dy(\xi)) \cdot D_\rho v(\xi) D_\vsig v(\xi), \qquad
  \text{for } v \in \Usz.
\end{align*}
We are particularly interested in the second variation evaluated at
the homogeneous deformation $y = \mF x$ (where $(\mF x)(\xi) := \mF
\xi$),
\begin{equation}
  \label{eq:defn_Ha}
  \< \Ha_\mF v, v \> := \< \ddel\Ea(\mF x) v, v \> = \sum_{\xi\in\Z}
  \sum_{\rho,\vsig \in \Rg} V_{\rho\vsig} \cdot D_\rho v(\xi) D_\vsig v(\xi),
\end{equation}
where, here and throughout most of this work, we are suppressing the
dependence of $V_{\rho\vsig}$ on $\mF\Rg$ when it is clear from the
context that we mean $V_{\rho\vsig}(\mF\Rg)$.

The stability of non-homogeneous states $y = \mF + u$ can be deduced
from the stability of homogeneous states; see \cite[Theorem
7.8]{acta}.



\subsection{A general QNL formulation}
The QNL approximation of $\Ea$ \cite{Shimokawa:2004} transitions
between the atomistic model and a continuum model by introducing
modified site potential at the a/c interface. To simplify our analysis
we focus on a single a/c interface. Let $\Z_- := \{-\infty, \dots,
0\}$ be the atomistic region and $\R_+ := (0, \infty)$ the continuum
region. In the atomistic region, we employ modified site energies
$\tilde{V}_\xi \in C^2(\R^\Rg)$, $\xi \leq 0$, while, in the continuum
region we employ the Cauchy--Born strain energy density
\cite{BLBL:arma2002,Hudson:stab,E:2007a,OrtnerTheil2012},
\begin{displaymath}
  W(\mG) := V(\mG \Rg).
\end{displaymath}
The QNL a/c coupling energy functional is then given by
\begin{equation}
  \label{eq:general_qnl_energy}
  \Eac(y) := \sum_{\xi = -\infty}^{0} \b[ \tilde{V}_\xi(Dy(\xi))  -
  V(\mF \Rg) \b] 
  + \int_{1/2}^\infty \b[W(\D y)-W(\mF)\b] \dx,
\end{equation}
for all deformations $y = \mF x + u, u \in \Usz$. 

We shall assume throughout that the modified site energies satisfy the
following conditions: there exists $\xi_0 \in \Z_-$ such that
\begin{align}
  \label{eq:qnl:noninteraction}
  &\tilde{V}_{\xi,\rho}(Dy(\xi)) = 0 \quad \text{whenever } \xi+\rho >
  1, \\
  \label{eq:qnl:finite_interface}
  & \tilde{V}_{\xi}(Dy) = V(Dy) \quad \text{ for } \xi \leq \xi_0, \\
  \label{eq:qnl:force_consistency}
  & \del\Eac(\mF x) = 0 \qquad \forall \mF > 0.
\end{align}
Condition \eqref{eq:qnl:noninteraction} states that atoms do not
interact with the continuum region, except for the interface atom at
$\xi = 0$. Condition \eqref{eq:qnl:finite_interface} states that the
transition region is bounded. Condition
\eqref{eq:qnl:force_consistency} is the force-consistency condition
(absence of ``ghost forces''), which ensures first-order consistency
of the QNL approximation \cite{Or:2011a}.

As in the case of the atomistic model, $\Eac$ is well-defined and has
variations in the sense of Gateaux derivatives. The second variation,
evaluated at the homogeneous deformation $y = \mF x$, $\Hac_\mF =
\ddel\Eac(\mF x)$, is given by
\begin{equation} 
  \label{eq:qnl:hessian}
  \< \Hac_\mF u, u \> = \sum_{\xi = -\infty}^0 \sum_{\rho,\vsig\in\Rg}
  \tilde{V}_{\xi,\rho\vsig} \cdot D_\rho u(\xi) D_\vsig u(\xi) +
  \ddW(\mF) \int_{1/2}^\infty |\D u|^2 \dx.
\end{equation}

\subsubsection{Error in critical strains}
\label{sec:err_crit_strains}
We shall be interested in understanding the regimes of strains $\mF$
for which $\Ha_\mF$ and $\Hac_\mF$ are stable. To explain why this is
relevant in practical simulations, consider the following description
of a quasi-static loading scenario (adapted from
\cite{Dobson:qce.stab}):
\begin{itemize}
\item[(i)] $\mF(t) \in C([t_0, t_*])$ is a given path in deformation
  space, where $t_*$ is a critical load, and constants $c_0, c_1 > 0$,
  such that
  \begin{displaymath}
    c_0 (t_* - t) \leq \gamma(\Ha_{\mF(t)}) \leq c_1 (t_*-t) \quad
    \text{for } t_0 \leq t \leq t_*.
  \end{displaymath}
  At the critical load $t_*$ (e.g., a bifurcation) an instability
  occurs, which typically indicates the onset of defect nucleation of
  defect motion (``critical event'').

\item[(ii)] Suppose now that QNL is initially stable but has a reduced
  stability region: $\gamma(\Hqnl_{\mF(t_0)}) > 0$ but
  $\gamma(\Hqnl_{\mF(t_*)}) < 0$. Then, there exists a reduced critical
  load $t_*^\qnl < t_*$ such that $\gamma(\Hqnl_{\mF(t_*^\qnl)}) =
  0$. 

  In such a situation we first of all predict an incorrect critical
  load, i.e., incorrect magnitude applied forces under which the
  critical event occurs. Moreover, since the event may occur in a
  different region of deformation space, it is even possible that a
  qualitatively different event is observed (e.g., a different type of
  defect is nucleated).
\end{itemize}

\subsubsection{Preliminary estimates}
We can immediately make the following generic observation.

\begin{proposition}
  \label{th:cac_leq_ca}
  $\gamma(\Hac_\mF) \leq \gamma(\Ha_\mF)$ for all $\mF > 0$.
\end{proposition}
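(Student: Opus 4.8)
The plan is to exhibit, for each homogeneous strain $\mF$, a family of test functions on which $\<\Hac_\mF u, u\>$ is close to $\<\Ha_\mF u, u\>$, so that the infimum defining $\gamma(\Hac_\mF)$ cannot exceed the infimum defining $\gamma(\Ha_\mF)$. Concretely, fix $\eps > 0$ and pick $u_\eps \in \Usz$ with $\|\D u_\eps\|_{L^2} = 1$ and $\<\Ha_\mF u_\eps, u_\eps\> \leq \gamma(\Ha_\mF) + \eps$. The idea is to translate $u_\eps$ far into the atomistic region $\Z_-$, i.e. set $u^{(n)}(\xi) := u_\eps(\xi + n)$ for large $n \in \N$. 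Since $\D u^{(n)}$ is just a translate of $\D u_\eps$, we still have $\|\D u^{(n)}\|_{L^2} = 1$, so $u^{(n)}$ is admissible in the variational problem for $\gamma(\Hac_\mF)$.

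The key step is then to compare $\<\Hac_\mF u^{(n)}, u^{(n)}\>$ with $\<\Ha_\mF u_\eps, u_\eps\>$. Because $u_\eps$ has bounded support, for $n$ large enough the support of $u^{(n)}$ lies entirely in $\{\xi \leq \xi_0\}$ together with a buffer of width $\rcut$; by the cut-off assumption \eqref{eq:qnl:finite_interface} the modified site energies satisfy $\tilde V_\xi = V$ on all sites that see the support of $u^{(n)}$, and the continuum integral in \eqref{eq:qnl:hessian} contributes nothing since $\D u^{(n)} \equiv 0$ on $(1/2,\infty)$. Hence $\<\Hac_\mF u^{(n)}, u^{(n)}\> = \sum_{\xi} \sum_{\rho,\vsig} V_{\rho\vsig} D_\rho u^{(n)}(\xi) D_\vsig u^{(n)}(\xi)$, and by translation invariance of this expression (the coefficients $V_{\rho\vsig}$ do not depend on $\xi$) this equals $\<\Ha_\mF u_\eps, u_\eps\> \leq \gamma(\Ha_\mF) + \eps$. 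Taking the infimum over admissible test functions gives $\gamma(\Hac_\mF) \leq \gamma(\Ha_\mF) + \eps$, and letting $\eps \to 0$ finishes the proof.

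The only place requiring care is the claim that the atomistic energy expression is exactly translation-invariant once we are away from the interface: one must check that the summand $\sum_{\rho,\vsig} V_{\rho\vsig} D_\rho u^{(n)}(\xi) D_\vsig u^{(n)}(\xi)$, summed over $\xi \leq 0$, picks up every term that appears in the sum over all of $\Z$ for $u_\eps$. This is where boundedness of $\supp(u_\eps)$ and the finite range $\rcut$ are used: for $n$ large the relevant sites $\xi$ all satisfy $\xi \leq 0$ (indeed $\xi \leq \xi_0$), so no terms are lost by restricting the sum. I expect this bookkeeping to be the main (though still routine) obstacle; everything else is immediate from the definitions and the structure of \eqref{eq:qnl:hessian}.
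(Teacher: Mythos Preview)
Your proposal is correct and follows essentially the same argument as the paper: pick a near-minimiser $u_\eps$ for $\gamma(\Ha_\mF)$, translate it deep into the atomistic region so that its support avoids the interface and continuum regions (using \eqref{eq:qnl:finite_interface} and the finite range $\rcut$), observe that on such a test function $\Hac_\mF$ and $\Ha_\mF$ coincide by translation invariance, and let $\eps \to 0$. The paper's proof is more terse but identical in substance.
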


\begin{remark}
  If the atomistic region is finite, then we would obtain that
  $\cac(\mG) \leq \ca(\mF) + {\rm err}$, where ${\rm err}$ decreases
  with increasing atomistic region size. See \cite{Hudson:stab} for
  results along these lines.
\end{remark}
\begin{proof}
  Let $\eps > 0$ and let $u \in \Usz$ such that $\| \D u \|_{L^2} = 1$
  and $\< \Ha u, u \> \leq \ca(\mF) + \eps$. Upon shifting $u$ by
  $v(\xi) = u(\xi + \eta)$ for $\eta$ sufficiently large, we can
  assume without loss of generality that $u(\xi) = 0$ for all $\xi
  \geq \xi_0 - \rcut-1$. Therefore, we obtain
  \begin{displaymath}
    \gamma(\Hac_\mF) \leq \< \Hac_\mF u, u \> = \< \Ha u, u \> \leq \gamma(\Ha_\mF)
    + \eps.
  \end{displaymath}
  Since $\eps$ was arbitary, the result follows.
\end{proof}

Proposition \ref{th:cac_leq_ca} ensures that, if a lattice $\mF \Z$ is
unstable in the atomistic model, then it must also be unstable in the
a/c coupling model. The converse question, whether stability of
$\Ha_\mF$ implies stability of $\Hac_\mF$ is more difficult to answer
in general. This question was first raised in \cite{Dobson:qce.stab}
for 1D second-neighbour Lennard-Jones type pair interactions, where
this implication holds. Further investigations in this direction can
be found in \cite{Dobson:qcf.stab,XHLi:3n,LiLuskin2012}. In the present
work we aim to present a more complete picture for the case of general
range many-body interactions.

To conclude this section, we present another elementary auxiliary
result that we will reference later on. Let the Cauchy--Born energy
functional be given by $\Ec(y) := \int_\R [W(\D y) - W(\mF)]\dx$, and
the corresponding hessian operator by
\begin{displaymath}
  \< \Hc_\mF u, u \> := \ddW(\mF)
  \| \D u \|_{L^2}^2.
\end{displaymath}
Then, we have the following result.

\begin{lemma}
  \label{th:cc_geq_ca}
  $\gamma(\Hc_\mF) = \ddW(\mF) \geq \gamma(\Ha_\mF)$ for all $\mF >
  0$.
\end{lemma}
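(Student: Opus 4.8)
The plan is to prove the two statements in Lemma~\ref{th:cc_geq_ca} separately. The equality $\gamma(\Hc_\mF) = \ddW(\mF)$ is almost immediate from the definitions: by construction $\<\Hc_\mF u, u\> = \ddW(\mF)\|\D u\|_{L^2}^2$, so restricting to the constraint set $\|\D u\|_{L^2} = 1$ gives $\<\Hc_\mF u, u\> = \ddW(\mF)$ identically, whence the infimum defining $\gamma(\Hc_\mF)$ equals $\ddW(\mF)$. (One should note the set $\{u \in \Usz : \|\D u\|_{L^2} = 1\}$ is nonempty, so there is no degenerate infimum.)

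The inequality $\ddW(\mF) \geq \gamma(\Ha_\mF)$ is the substantive part, and the natural approach is to exhibit a test sequence $u_n \in \Usz$ with $\|\D u_n\|_{L^2} = 1$ along which $\<\Ha_\mF u_n, u_n\> \to \ddW(\mF)$. The heuristic is that long-wavelength, slowly varying displacements ``see'' only the Cauchy--Born limit of the atomistic Hessian: writing out $\<\Ha_\mF u, u\> = \sum_\xi \sum_{\rho,\vsig} V_{\rho\vsig}\, D_\rho u(\xi) D_\vsig u(\xi)$ and approximating $D_\rho u(\xi) \approx \rho\, \D u$ for smooth $u$, the quadratic form collapses to $\big(\sum_{\rho,\vsig}\rho\vsig V_{\rho\vsig}\big)\|\D u\|_{L^2}^2 = \ddW(\mF)\|\D u\|_{L^2}^2$, using $\ddW(\mF) = \sum_{\rho,\vsig\in\Rg}\vsig\rho\, V_{\rho\vsig}(\mF\Rg)$ from the chain rule applied to $W(\mG) = V(\mG\Rg)$. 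A clean way to make this rigorous is to take a fixed smooth compactly supported profile $\phi$ and set $u_n(\xi) := \frac{1}{\sqrt n}\,\phi(\xi/n)$ (suitably interpolated), then estimate $D_\rho u_n(\xi) - \rho\,\D u_n(\xi)$ using a second-difference/Taylor bound, which is $O(n^{-3/2})$ pointwise and supported on $O(n)$ sites, so the cross terms in the quadratic form vanish as $n\to\infty$ while $\|\D u_n\|_{L^2} \to \|\phi'\|_{L^2}$; rescaling gives the claim. Combining with $\gamma(\Ha_\mF) \le \<\Ha_\mF u_n, u_n\>/\|\D u_n\|_{L^2}^2$ and passing to the limit yields $\gamma(\Ha_\mF) \le \ddW(\mF)$.

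The main obstacle is purely technical bookkeeping: one must control the discrepancy between the finite differences $D_\rho u_n$ and their continuum surrogate $\rho\,\D u_n$ uniformly in $\xi$ and sum the resulting error over the $O(n)$ sites in the support, being careful that the interpolation convention $\D v(x) = v(\xi) - v(\xi-1)$ on $(\xi-1,\xi)$ matches the discrete gradient used in the sum. There is a slicker alternative that avoids an explicit test sequence entirely: since $\Ec$ has range $\rcut$ after homogenisation, one can instead invoke Proposition~\ref{th:cac_leq_ca} applied to a ``degenerate'' a/c coupling in which the atomistic region is empty (so $\Eac = \Ec$), but this requires checking that such a choice satisfies \eqref{eq:qnl:noninteraction}--\eqref{eq:qnl:force_consistency}, which may not be literally admissible; I would therefore favour the direct test-function argument as the robust route, and only mention the softer viewpoint as a remark. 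Either way, the inequality is sharp --- equality holds, e.g., for nearest-neighbour interactions --- which is worth recording since it shows the Cauchy--Born model is always at least as stable as the atomistic model, consistent with the general philosophy that continuum models cannot detect short-wavelength instabilities.
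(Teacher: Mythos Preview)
Your proposal is correct. The equality $\gamma(\Hc_\mF)=\ddW(\mF)$ is indeed immediate from the definition, and your long-wavelength scaling argument for $\gamma(\Ha_\mF)\le\ddW(\mF)$ is the standard classical route: rescaling a fixed smooth profile $\phi$ via $u_n(\xi)=n^{-1/2}\phi(\xi/n)$ forces $D_\rho u_n(\xi)=\rho\,n^{-3/2}\phi'(\xi/n)+O(n^{-5/2})$, so the Rayleigh quotient converges to $\sum_{\rho,\vsig}\rho\vsig\,V_{\rho\vsig}=\ddW(\mF)$. The bookkeeping you flag is genuinely routine here since the support contains $O(n)$ sites and the error terms are one order smaller than the leading terms.

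As for comparison with the paper: the paper does not actually prove this lemma. It only remarks that the result is classical (citing Wallace) and that a proof ``which can be translated verbatim'' appears in \cite{Hudson:stab}. Your scaling argument is precisely the kind of argument those references contain; an equivalent Fourier-side formulation is that the symbol of $\Ha_\mF$ divided by $|e^{ik}-1|^2$ tends to $\ddW(\mF)$ as $k\to 0$, which encodes the same long-wavelength limit. Incidentally, the same scaling idea reappears later in the paper in the proof of Proposition~\ref{th:2d:atm_hessrep}, so your approach is fully in the spirit of the paper. Your side remark about invoking Proposition~\ref{th:cac_leq_ca} with an ``empty atomistic region'' is, as you suspected, not literally admissible in the paper's framework (the general QNL form fixes $\Z_-$ as the atomistic region and requires $\tilde V_\xi=V$ for $\xi\le\xi_0$), so you are right to prefer the direct argument.
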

\begin{proof}
  The idea of this result is classical; see for example
  \cite{Wallace98}.  A proof, which can be translated verbatim to our
  present setting, is given in \cite{Hudson:stab}.
\end{proof}


\section{(In-)stability of a Second-Neighbour QNL Method}
\label{sec:qnl1d}

\subsection{The second-neighbour QNL method}
\label{sec:qnl1d:defn_qnl}
%
%
The original QNL energy, in the case of second-neighbours ($\Rg = \{
\pm 1, \pm 2\}$), is given by \cite{Shimokawa:2004,Dobson:2008b}
\begin{equation}
  \label{eq:defn_qnl2}
  \begin{split}
    \Eqnl(y) &= \sum_{\xi = -\infty}^{-2} \b[ V(Dy(\xi)) - V(\mF\Rg) \b]
    + \sum_{\xi = -1}^{0} \b[ V(\tilD y(\xi)) - V(\mF \Rg) \b] \\
    & \qquad  + \int_{1/2}^\infty \b[ W(\D y) - W(\mF) \b] \dx,
  \end{split}
\end{equation}
where
\begin{displaymath}
  \tilD := ( D_{-2}, D_{-1}, D_1, 2 D_1 ).
\end{displaymath}
(That is, interaction of interface atoms with the atomistic region use
the atomistic finite difference, $D_{-j}$, while interaction of
interface atoms with the continuum region use only nearest-neighbour
finite difference, $j D_1$.)

It is well-known that this energy functional is force-consistent
\cite{Shimokawa:2004,E:2006},
\begin{displaymath}
  \< \del\Eqnl(\mF x), v \> = 0 \qquad \forall v \in \Usz,
\end{displaymath}
which implies a general first-order consistency result
\cite{acta,Or:2011a}.

Moreover, for the case of Lennard-Jones type interactions under
expansion, and periodic boundary conditions, it has been shown in
\cite{Dobson:qce.stab} that $\gamma(H^\qnl_\mF) > 0$ if and only if
$\gamma(\Ha_\mF) > 0$, up to a small error. This can be generalised
and translated to our setting as follows.

\begin{proposition}
  \label{th:qnl2_stab_pair}
  Suppose that $\Rg = \{\pm 1, \pm 2\}$ and $V(Dy) = \sum_{j \in \Rg}
  \phi(|D_j y|)$, where $\phi \in C^2(\R_+)$. Then, for $\mF > 0$,
  $\gamma(\Hqnl_\mF) > 0$ if and only if $\gamma(\Ha_\mF) > 0$.
\end{proposition}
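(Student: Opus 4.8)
The plan is to compute both Hessians explicitly on the Fourier side (or, more precisely, to reduce to a half-line discrete problem) and to exploit the special structure of pair interactions, where the second-neighbour QNL scheme modifies only the second-neighbour terms touching the interface. First I would recall from Proposition \ref{th:cac_leq_ca} that $\gamma(\Hqnl_\mF) \leq \gamma(\Ha_\mF)$ holds generally, so one direction is free: if $\gamma(\Ha_\mF) \leq 0$ then $\gamma(\Hqnl_\mF) \leq 0$. It remains to show that $\gamma(\Ha_\mF) > 0$ implies $\gamma(\Hqnl_\mF) > 0$. For pair interactions the atomistic Hessian reads $\langle \Ha_\mF u, u\rangle = \phi''_1 \sum_\xi |D_1 u(\xi)|^2 + \phi''_2 \sum_\xi |D_2 u(\xi)|^2$ (writing $\phi''_j := \phi''(j\mF)$), and $\gamma(\Ha_\mF) > 0$ is equivalent, by a standard Fourier computation on $\Z$, to $\phi''_1 + 4\phi''_2 > 0$ together with $\phi''_1 > 0$ — or, more sharply, to positivity of $\phi''_1 \frac{\sin^2(\theta/2)}{\sin^2(\theta/2)} + \ldots$; in any case the relevant condition becomes $\phi''_1 > 0$ and $\phi''_1 + 4\phi''_2 \geq 0$ (the continuum/long-wavelength limit).

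Next I would write out $\langle \Hqnl_\mF u, u\rangle$ using \eqref{eq:qnl:hessian} with the reconstruction $\tilD$: for $\xi \leq -2$ one gets the full atomistic contribution $\phi''_1 |D_1 u(\xi)|^2 + \phi''_2 |D_2 u(\xi)|^2$; for the two interface sites $\xi = -1, 0$ the second-neighbour bond reaching into the continuum contributes $\phi''_2 |2 D_1 u(\xi)|^2 = 4\phi''_2 |D_1 u(\xi)|^2$ instead of $\phi''_2 |D_2 u(\xi)|^2$; and the continuum term contributes $(\phi''_1 + 4\phi''_2)\int_{1/2}^\infty |\nabla u|^2$. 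The key algebraic step is to collect all the nearest-neighbour-type terms and rewrite the QNL Hessian, after using the telescoping/reflection identity that underlies force consistency, in a manifestly nonnegative-plus-atomistic form: concretely, I expect an identity of the shape $\langle \Hqnl_\mF u, u\rangle = \phi''_1 \|\nabla u\|_{L^2}^2 + \phi''_2 \big(\text{sum of squares of the form } |D_1 u(\xi) + D_1 u(\xi+1)|^2 \text{ or similar}\big)$ on the whole line, with the interface reconstruction precisely accounting for the "missing" bonds. Once the QNL Hessian is expressed as $\phi''_1 \|\nabla u\|^2 + \phi''_2 Q(u)$ with $Q(u) = \sum |D_1 u(\xi) + D_1 u(\xi+1)|^2$ (the natural second-neighbour quadratic form that is reconstructed consistently), one checks $0 \leq Q(u) \leq 4\|\nabla u\|^2$ pointwise by $|a+b|^2 \leq 2(|a|^2 + |b|^2)$ summed appropriately, and then $\langle \Hqnl_\mF u, u\rangle \geq \min(\phi''_1, \phi''_1 + \tfrac{?}{} \phi''_2)\|\nabla u\|^2$; distinguishing the signs $\phi''_2 \geq 0$ and $\phi''_2 < 0$ gives $\gamma(\Hqnl_\mF) \geq \min(\phi''_1, \phi''_1 + 4\phi''_2) > 0$ under the atomistic stability hypothesis.

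The main obstacle I anticipate is establishing the correct sum-of-squares identity for the QNL Hessian — i.e., verifying that the interface reconstruction $2D_1$ in place of $D_2$ does not spoil the positivity, and that no spurious interface eigenmode appears. This is exactly the subtlety flagged in the introduction (new eigenmodes localized at the a/c interface), and here is where force consistency \eqref{eq:qnl:force_consistency} is used crucially: it forces the interface coefficients to align so that the cross terms telescope rather than produce an indefinite boundary form. Technically, the cleanest route is probably to test against $u \in \Usz$ supported sufficiently far from the interface to recover the bulk condition, and then to handle the finitely many interface degrees of freedom by an explicit finite-dimensional perturbation argument, showing the interface contribution is a relatively compact, sign-controlled perturbation of the bulk form. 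If the clean global identity is elusive, a fallback is a contradiction argument: assume $\gamma(\Hqnl_\mF) \leq 0$, extract a near-minimizer, split it into a far-field part (controlled by $\gamma(\Ha_\mF) > 0$ via Proposition \ref{th:cac_leq_ca}-type reasoning) and a bounded interface part, and derive a contradiction from the strict bulk stability plus the explicit interface quadratic form.
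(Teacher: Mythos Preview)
Your overall strategy---one direction is free from Proposition~\ref{th:cac_leq_ca}, then an algebraic identity for $\Hqnl_\mF$ followed by a case split on the sign of $\phi''(2\mF)$---matches the paper's. But the concrete identity you are guessing at is not the one that works cleanly, and this is exactly the obstacle you flag. You look for $\langle \Hqnl_\mF u,u\rangle = \phi''_1\|\nabla u\|^2 + \phi''_2 Q(u)$ with $Q$ built from sums $|D_1 u(\xi)+D_1 u(\xi{+}1)|^2=|D_2 u(\xi)|^2$; the paper instead uses the \emph{strain gradient} form built from differences $|D_1 u(\xi{+}1)-D_1 u(\xi)|^2=|D_1^2 u(\xi)|^2$. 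Via the parallelogram identity $|D_2 u(\xi)|^2 = 2|D_1 u(\xi)|^2 + 2|D_1 u(\xi{+}1)|^2 - |D_1^2 u(\xi)|^2$, one obtains directly
\[
  \langle \Hqnl_\mF u,u\rangle = \langle \Hc_\mF u,u\rangle - \phi''(2\mF)\sum_{\xi=-\infty}^{-2}|D_1^2 u(\xi)|^2,
\]
with no awkward interface terms left over: the reconstruction $2D_1$ in place of $D_2$ has vanishing second difference, so the interface simply truncates the strain-gradient sum. This is where your form runs into trouble---your $Q$ does not telescope as neatly across the interface, and verifying $0\le Q\le 4\|\nabla u\|^2$ with the right interface bookkeeping is more work than necessary.

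From the identity above the case analysis is immediate. If $\phi''(2\mF)\le 0$ the correction is nonnegative, so $\gamma(\Hqnl_\mF)\ge \gamma(\Hc_\mF)\ge \gamma(\Ha_\mF)$ by Lemma~\ref{th:cc_geq_ca}. If $\phi''(2\mF)>0$, the \emph{same} parallelogram computation applied to $\Ha_\mF$ gives $\langle \Ha_\mF u,u\rangle = \langle \Hc_\mF u,u\rangle - \phi''(2\mF)\sum_{\xi\in\Z}|D_1^2 u(\xi)|^2$, and subtracting yields $\langle \Hqnl_\mF u,u\rangle = \langle \Ha_\mF u,u\rangle + \phi''(2\mF)\sum_{\xi\ge -1}|D_1^2 u(\xi)|^2 \ge \langle \Ha_\mF u,u\rangle$. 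Either way $\gamma(\Hqnl_\mF)\ge\gamma(\Ha_\mF)$. Your fallback contradiction/compactness argument and the finite-dimensional interface perturbation are not needed.
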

\begin{proof}
  We only give a brief outline of the proof, as the essential ideas
  are already contained in \cite{Dobson:qce.stab}.

  We already know that $\gamma(\Hqnl_\mF) \leq \gamma(\Ha_\mF)$, hence
  we only prove the opposite inequality.
  
  A short calculation (see \cite{Dobson:qce.stab, acta} for more
  details), employing the identity
  \begin{displaymath}
    \phi''(2\mF) |D_2 u(\xi)|^2 = 2 \phi''(2\mF) \b\{ |D_1 u(\xi)|^2 +
    |D_1 u(\xi+1)|^2 \b\} - \phi''(2\mF) |D_1^2 u(\xi)|^2,
  \end{displaymath}
  yields
  \begin{equation}
    \label{eq:qnl2_stab_pair:critid}
    \< \Hqnl u, u \> = \< \Hc u, u \> - \phi''(2\mF) \sum_{\xi =
      -\infty}^{-2} |D_1^2 u(\xi)|^2.
  \end{equation}
  Hence, if $\phi''(2\mF) \leq 0$ (Lennard-Jones case), then
  $\gamma(\Hqnl_\mF) \geq \gamma(\Hc_\mF) \geq \gamma(\Ha_\mF)$.

  If $\phi''(2\mF) > 0$, then employing the identity $\< \Ha u, u \> =
  \< \Hc u, u \> - \phi''(2\mF) \| D_1^2 u \|_{\ell^2}^2$ (which
  follows from the same calculation as
  \eqref{eq:qnl2_stab_pair:critid}), we obtain
  \begin{displaymath}
    \< \Hqnl u, u \> = \< \Ha u, u \> + \phi''(2\mF) \sum_{\xi =
      - 1}^{\infty} |D_1^2 u(\xi)|^2.
  \end{displaymath}
  Hence, $\gamma(\Hqnl_\mF) \geq \gamma(\Ha_\mF)$.
\end{proof}

\subsection{Instability Example}
\label{sec:qnl1d:instab}
Proposition \ref{th:qnl2_stab_pair} leads us to investigate, whether
the result holds also for general many-body interactions. An analysis
of Li and Luskin \cite{LiLuskin2012} in a similar context, but ignoring
the transition from the atomistic to continuum model, indicates that
this may be false. Indeed, we can construct a counterexample. Here, we
present only a brief summary, but give more detail in \S \ref{sec:app_1dqnl2_example}. Our example is somewhat academic in that we do not show that
any concrete interaction potential exhibits this instability, but only
that it may occur {\em in principle}. 

For ease of notation, we write $V_{\rho,\vsig} = V_{\rho,\vsig}(\mF
\Rg)$. Exploiting the point symmetry of $V$, possibly rescaling by a
scalar, we assume that
\begin{align*}
  V_{1,1} = V_{-1,-1} &= 1, &  V_{1,-1} &=: \alpha,  \\
  V_{2,2} = V_{-2,-2} &=: \beta, & V_{2,-2} &=: \gamma, \\
   && \hspace{-6cm} V_{1,2} = V_{-1,-2} = -V_{-1,2} = -V_{1,-2} &=: \delta,
\end{align*}
for parameters $\alpha, \beta, \gamma, \delta \in \R$. The additional
symmetry $V_{1,2} = - V_{-1,2}$ that we employed is consistent with
EAM type potentials.

With these parameters, and a lengthy computation following
\cite{Dobson:qce.stab,LiLuskin2012}, we obtain
\begin{align}
  \label{eq:qnl1d:instab}
  \< \Hqnl_\mF u, u \> &=
  A \sum_{\xi \in \Z} |D_1 u(\xi)|^2 
  + \sum_{\xi = -\infty}^{-0} B_\xi |D_1^2 u(\xi)|^2  \\
  \notag
  & \qquad 
  + \sum_{\xi = -\infty}^{-1} C_\xi |D_1^3 u(\xi)|^2
  + D \sum_{\xi = -\infty}^{-2} |D_1^4 u(\xi)|^2,
\end{align}
where $A, B_\xi, C_\xi, D \in \R$ are coefficients that depend
linearly on the parameters $\alpha, \beta, \gamma, \delta$. 

Choosing the parameter values $\alpha = -0.99, \beta = 0.1, \gamma =
0.15, \delta = -0.2$ yields
\begin{align*}
  &A = 0.38; \\
  & B_0 = 0.91, \quad  B_{-1} = 3.26, \quad B_{-2} = 3.56, \quad
  B_\xi = 3.91 ~~\text{ for $\xi \leq -3$}; \\
  &C_{-1} = -0.5, \quad C_{-2} = -1.3, \quad C_\xi = -1.6 ~~\text{ for
    $\xi \leq -3$}; \\
  & D = 0.15.
\end{align*}
By a numerical calculation, we obtain that
\begin{displaymath}
  \gamma(\Hqnl_\mF) < -0.005.
\end{displaymath}

Conversely, using straightforward Fourier analysis, we can show that
\begin{displaymath}
  \gamma(\Ha_\mF) = 0.02.
\end{displaymath}
That is, $\Ha_\mF$ is stable, while $\Hqnl_\mF$ is unstable with this
choice of parameters. The details of the calculation are given in
\S~\ref{sec:app_1dqnl2_example}.

\subsection{A numerical example}
\label{sec:instab_qnl_num}
The counterexample from \S~\ref{sec:qnl1d:instab} is somewhat
dissatisfying in that it is based purely on experimenting with
coefficients, but there is no clear connection to a physical problem
of interest where the predicted discrepancy in stability occurs. We
therefore present a numerical example of a 1D chain with EAM type
interaction and applied external forces, for which we can still
observe this stability gap. We give a brief outline of the experiment
setup; the details of the model and of the setup are given in
\S~\ref{sec:app_1dqnl2_num}.

We reformulate the QNL model in a finite domain $\{-N, \dots, N\}$
with atomistic region $\{-K, \dots, K\}$. This is implemented by
applying the boundary condition $y(\xi) = \mF \xi$ for $|\xi| \geq
N$. Moreover, we apply an external force, to be able to observe
nonlinear deformation effects. Finally, we discretise the continuum
region using P1 finite elements. Given $N$, the atomistic region size
$K$ and the FE mesh are chosen quasi-optimally. 

We define the critical strain, $\mF^\qnl$, to be the smallest strain
greater than one, for which the corresponding equilibrium $y_\mF$ of
the energy is unstable, i.e., $\gamma(\ddel\Eqnl(y_\mF)) \leq 0$.

The exact critical strain $\mF^*$, against which the error is
measured, is defined to be the critical strain for the unrestricted
atomistic model.


\begin{figure}
  \begin{center}
    \includegraphics[width=14cm]{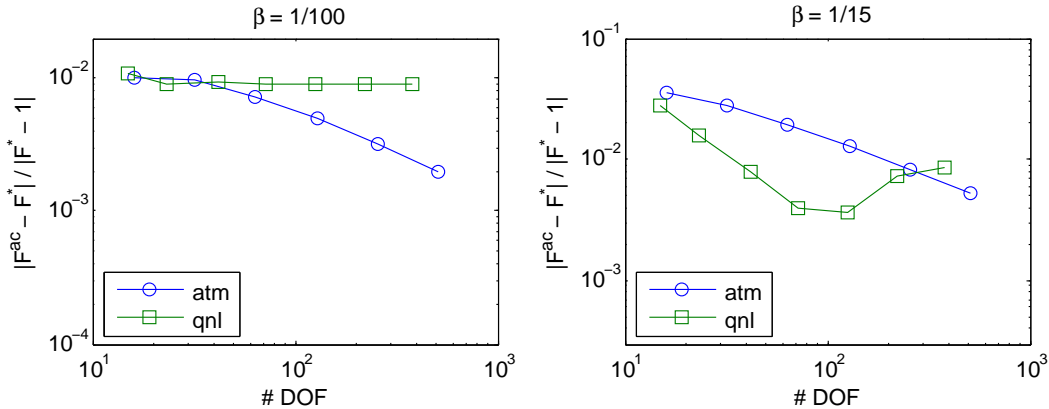}
    \caption{\label{fig:1dqnl_instab} Relative errors of critical
      strains for QNL and the restricted atomistic simulation. The
      external forces are parameterised by $\alpha = 1.5, \beta \in \{
      0.01, 0.066 \}$. See \S~\ref{sec:app_1dqnl2_num} for details of
      the model and the computation.}
  \end{center}
\end{figure}

In Figure \ref{fig:1dqnl_instab} we plot the relative errors in the
critical strains, for increasing domain sizes and hence increasing
computational cost (measured in terms of the number of degrees of
freedom required for the computation) for the QNL method and for the
restricted atomistic model. We observe that the critical strains in
the restricted atomistic model display clear systematic convergence,
whereas the critical strains of the QNL method appear to diverge or
converge to a wrong limit.

\section{A Universally Stable A/C Coupling in 1D}
\label{sec:refl}
Motivated by the results of \S~\ref{sec:qnl1d} we seek a/c couplings
with universally reliable stability properties. 

\begin{definition}
  An a/c coupling energy $\Eac$ is {\em universally stable} if, for
  all interaction potentials $V \in C^2(\R^\Rg)$ and strains $\mF >
  0$, $\gamma(\Hac_\mF) > 0$ if and only if $\gamma(\Ha_\mF) > 0$.
\end{definition}

\medskip

The analysis in \cite{acta} indicates that the behaviour we
observed in \S~\ref{sec:instab_qnl_num} is not possible if the QNL
method were universally stable, and indeed we saw in
\S~\ref{sec:qnl1d:instab} that counterexamples can be constructed.

We will now present the construction of a universally stable a/c
coupling. For simplicity, we consider again the case where the
atomistic region is given by $\Z_- = \{0, -1, -2, \dots \}$.  The {\it
  reflection method}, which we formulate in the following paragraphs
can be understood as a special case of the QNL and geometric
reconstruction ideas \cite{E:2006,PRE-ac.2dcorners,Shimokawa:2004},
but with a particularly simple reconstruction operator.

For any lattice function $z : \Z \to \R$ (both deformations and
displacements) we denote its anti-symmetric reflection about the
origin by
\begin{align*}
  z^* &:= \cases{ z(\xi), & \xi \leq 0, \\
    2 z(0) - z(-\xi), & \xi > 0.} 
\end{align*}
With this notation we define, for $y = \mF x + u, u \in \Usz$,
\begin{align*}
  \Erefl(y) &:= \E^*(y) + \int_0^\infty W(\D y) \dx, \qquad \text{where} \\
  \E^*(y) &:= \sum_{\xi = -\infty}^{-1} \b[ V(Dy^*(\xi))-V(\mF\Rg)
  \b] + \smfrac12 \b[V(Dy^*(0)) - V(\mF\Rg)\b].
\end{align*}
One may readily check that $\Erefl$ is of the general form
\eqref{eq:general_qnl_energy}.

The key property, the reason for the name ``reflection method'', and
in fact the motivation for the definition of $\Erefl$, is the
following.

\begin{lemma}
  \label{th:refl:lemma}
  Let $y = \mF x + u, u \in \Usz$, then $\E^*(y) = \smfrac12
  \Ea(y^*)$.
\end{lemma}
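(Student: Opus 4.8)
The plan is to unfold both sides of the claimed identity directly from the definitions and match them term by term. Write $y = \mF x + u$ with $u \in \Usz$, and recall that the anti-symmetric reflection $y^*$ agrees with $y$ on $\Z_-$ and satisfies $y^*(\xi) = 2y(0) - y(-\xi)$ for $\xi > 0$. The crucial elementary observation is how the finite-difference stencil $Dy^*(\xi)$ behaves under this reflection. For $\rho \in \Rg$ and any $\xi \in \Z$ one computes $D_\rho y^*(\xi) = y^*(\xi+\rho) - y^*(\xi)$; I would check the three relevant regimes — both $\xi+\rho$ and $\xi$ in $\Z_-$ (trivial), both strictly positive, and one on each side — and verify that in every case $D_\rho y^*(\xi)$ equals $\pm D_{\pm\rho} y$ of a reflected index. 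Concretely, the antisymmetry $y^*(-k) - y^*(-j) = y(j) - y(k) = -D_{j-k}y(k)$ should give $D_\rho y^*(\xi) = -D_{-\rho} y^*(-\xi-\rho) $ type relations, which combined with the reflection symmetry \eqref{eq:qnl1d:symmV} of $V$, namely $V((g_\rho)) = V((-g_{-\rho}))$, means $V(Dy^*(\xi)) = V(Dy^*(-\xi))$ for the mirrored site (and $V(Dy^*(0))$ is fixed under the mirror).

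Granting this, the computation is a reindexing of the lattice sum. The atomistic energy of $y^*$ is $\Ea(y^*) = \sum_{\xi \in \Z}[V(Dy^*(\xi)) - V(\mF\Rg)]$. Split this as the sum over $\xi \le -1$, the single term $\xi = 0$, and the sum over $\xi \ge 1$. By the mirror symmetry just established, the sum over $\xi \ge 1$ equals the sum over $\xi \le -1$, so $\Ea(y^*) = 2\sum_{\xi=-\infty}^{-1}[V(Dy^*(\xi)) - V(\mF\Rg)] + [V(Dy^*(0)) - V(\mF\Rg)]$. Dividing by two yields exactly $\E^*(y)$ as defined. One should note that the sums are finite: since $u$ has bounded support, $Dy^*(\xi) = \mF\Rg$ for all but finitely many $\xi$, so no convergence subtleties arise.

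The main obstacle — really the only place requiring care — is the bookkeeping in the mixed regime where $\xi \le 0 < \xi + \rho$, since there $Dy^*$ mixes an unreflected value $y(\xi)$ with a reflected one $2y(0) - y(\xi+\rho)$. I would handle this by writing $D_\rho y^*(\xi) = 2y(0) - y(-\xi-\rho) - y(\xi) = D_{-\xi}y^*(0) - D_{\xi+\rho}y^*(0)$ or a similar decomposition into nearest-to-interface pieces, and then checking that applying the mirror $\xi \mapsto -\xi$ together with \eqref{eq:qnl1d:symmV} sends this stencil to the corresponding stencil at the mirrored site; the definition of $y^*$ as \emph{anti}-symmetric (rather than symmetric) is exactly what makes the signs work out so that $V$ is invariant. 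Once that verification is in hand, the rest is a one-line reindexing as above.
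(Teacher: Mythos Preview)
Your proposal is correct and follows essentially the same route as the paper: establish $V(Dy^*(\xi)) = V(Dy^*(-\xi))$ via the reflection symmetry \eqref{eq:qnl1d:symmV}, then reindex the sum. The one simplification you are missing is that the anti-symmetry $y^*(\xi) = 2y^*(0) - y^*(-\xi)$ holds for \emph{all} $\xi \in \Z$ once $y^*$ is defined (not just for $\xi > 0$), so the identity $D_\rho y^*(\xi) = -D_{-\rho} y^*(-\xi)$ follows in a single line with no case analysis and no ``mixed regime'' bookkeeping.
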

\begin{proof}
  By definition, $y^*$ is anti-symmetric about the origin, and 
  consequently,
  \begin{align*}
    D_\rho y^*(\xi) &= y^*(\xi+\rho) - y^*(\xi) \\
    &= \b[ 2y^*(0) - y^*(-\xi-\rho) \b] - \b[ 2 y^*(0) - y^*(-\xi) \b]
    \\
    &= - D_{-\rho} y^*(-\xi).
  \end{align*}
  Due to the reflection symmetry \eqref{eq:qnl1d:symmV} of $V$, we
  obtain $V(Dy^*(\xi)) = V(Dy^*(-\xi))$, which implies the stated
  result.
\end{proof}

\begin{theorem}
  The a/c coupling $\Erefl$ is force-consistent,
  \begin{equation}
    \label{eq:refl:patch_test}
    \b\< \del\Erefl(\mF x), v \b\> = 0,
  \end{equation}
  and universally stable,
  \begin{equation}
    \label{eq:refl:univstab}
    \gamma(\Hrefl_\mF) = \gamma(\Ha_\mF).
  \end{equation}
\end{theorem}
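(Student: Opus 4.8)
The plan is to prove the two claims separately, leveraging the key identity $\E^*(y) = \frac12 \Ea(y^*)$ from Lemma~\ref{th:refl:lemma}.

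For force-consistency \eqref{eq:refl:patch_test}: I would evaluate $\b\<\del\Erefl(\mF x), v\b\>$ for $v \in \Usz$. The continuum part contributes $W'(\mF)\int_0^\infty \D v \dx = -W'(\mF) v(0)$ (boundary term, since $\D v$ integrates to $v(\infty)-v(0) = -v(0)$). For the atomistic-reflection part, since $\mF x$ restricted and then reflected is just $\mF x$ again (the anti-symmetric reflection of the homogeneous deformation about the origin is itself homogeneous), I differentiate $\frac12\Ea((\mF x + v)^*)$ in $v$ at $v=0$. The chain rule gives $\frac12 \b\<\del\Ea(\mF x), v^*\b\>$, where $v^* \in \Usz$ is the anti-symmetric reflection of $v$. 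But $\del\Ea(\mF x) = 0$ by the atomistic force-consistency (the homogeneous chain is an equilibrium of the atomistic model), so this term vanishes. Wait --- that leaves $-W'(\mF)v(0)$, which need not vanish; so I must be more careful: $\E^*$ differentiated at $\mF x$ also sees the $\xi = 0$ term and the coupling to the right, and the reflection map $v \mapsto v^*$ is \emph{not} the identity near $\xi = 0$. The correct route is to differentiate $\E^*$ directly using \eqref{eq:qnl:noninteraction}-type bookkeeping and the symmetries \eqref{eq:qnl1d:symm_dV_hV}, showing the interface atomistic force exactly cancels $W'(\mF)v(0)$; alternatively, invoke that $\Erefl$ is of the general QNL form \eqref{eq:general_qnl_energy} (already noted in the text) and that the construction was designed so that \eqref{eq:qnl:force_consistency} holds --- but one still owes the reader the computation. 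I would present the direct differentiation, using Lemma~\ref{th:refl:lemma} to write $\del\E^*(\mF x)$ in terms of $\del\Ea$ evaluated at the homogeneous state composed with the (affine) reflection operator, carefully tracking the Jacobian of $v \mapsto v^*$ at $\xi \le 0$.

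For universal stability \eqref{eq:refl:univstab}: the inequality $\gamma(\Hrefl_\mF) \le \gamma(\Ha_\mF)$ is Proposition~\ref{th:cac_leq_ca}. For the reverse, I would take $u \in \Usz$ with $\|\D u\|_{L^2} = 1$, and use Lemma~\ref{th:refl:lemma} at the level of second variations: $\<\ddel\E^*(\mF x)u, u\> = \frac12\<\ddel\Ea(\mF x)u^*, u^*\> = \frac12\<\Ha_\mF u^*, u^*\>$, where $u^*$ is the anti-symmetric reflection of $u$ (note $u^* \in \Usz$ since $u$ is compactly supported). Hence
\begin{displaymath}
  \<\Hrefl_\mF u, u\> = \smfrac12 \<\Ha_\mF u^*, u^*\> + \ddW(\mF)\int_0^\infty |\D u|^2 \dx.
\end{displaymath}
Now I compute $\|\D u^*\|_{L^2(\R)}^2$. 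Since $u^*$ is the anti-symmetric reflection, $\D u^*$ on $(0,\infty)$ is the even reflection of $\D u$ on $(-\infty,0)$, so $\|\D u^*\|_{L^2(\R)}^2 = 2\|\D u\|_{L^2((-\infty,0))}^2$. Writing $a := \|\D u\|_{L^2((-\infty,0))}^2$ and $b := \|\D u\|_{L^2((0,\infty))}^2$, we have $a + b = 1$ and
\begin{displaymath}
  \<\Hrefl_\mF u, u\> = \smfrac12 \<\Ha_\mF u^*, u^*\> + \ddW(\mF)\, b \ge \smfrac12 \gamma(\Ha_\mF)\|\D u^*\|_{L^2}^2 + \ddW(\mF)\, b = \gamma(\Ha_\mF)\, a + \ddW(\mF)\, b.
\end{displaymath}
By Lemma~\ref{th:cc_geq_ca}, $\ddW(\mF) = \gamma(\Hc_\mF) \ge \gamma(\Ha_\mF)$, so $\gamma(\Ha_\mF)\, a + \ddW(\mF)\, b \ge \gamma(\Ha_\mF)(a+b) = \gamma(\Ha_\mF)$. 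Taking the infimum over $u$ gives $\gamma(\Hrefl_\mF) \ge \gamma(\Ha_\mF)$, completing the equality.

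The main obstacle is the force-consistency part: one must handle the half-weighted $\xi=0$ site energy and the fact that $v \mapsto v^*$ has a nontrivial derivative at the interface, so that the cancellation with the continuum boundary term $-W'(\mF)v(0)$ is not purely formal. The stability part is essentially a clean application of the two lemmas, with the only subtlety being the bookkeeping that $\|\D u^*\|_{L^2}^2 = 2a$ and that $u^* \in \Usz$; I would verify the latter by noting $u$ has bounded support, hence so does $u^* - (\text{const near }+\infty)$ --- and in fact $u^*$ itself is compactly supported since $u(0)$ only shifts finitely many values but $u(-\xi) = 0$ for large $\xi$ forces $u^*(\xi) = 2u(0) - 0$; so strictly I should replace $u^*$ by $u^* - u(0)\cdot(\text{something})$ or note that the stability constant is insensitive to adding the Heaviside-type constant since $\D$ of it is compactly supported --- a minor point to address cleanly, perhaps by working with $u$ already normalized so $u(0)=0$ via the shift used in the proof of Proposition~\ref{th:cac_leq_ca}.
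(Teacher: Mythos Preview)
Your stability argument \eqref{eq:refl:univstab} is essentially identical to the paper's: split $\|\D u\|^2 = a+b$, use Lemma~\ref{th:refl:lemma} at second order to get $\frac12\<\Ha_\mF u^*,u^*\>$, note $\|\D u^*\|_{L^2}^2 = 2a$, and finish with Lemma~\ref{th:cc_geq_ca}. Your worry that $u^*\notin\Usz$ is legitimate but minor --- since $\D u^*$ has compact support, the bound $\<\Ha_\mF u^*,u^*\>\ge\gamma(\Ha_\mF)\|\D u^*\|^2$ extends by a routine cutoff argument; the paper simply glosses over this.

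For force-consistency your proposal has a gap. You correctly diagnose that $\<\del\Ea(\mF x), v^*\>$ need \emph{not} vanish, because $v^*$ is not compactly supported (it tends to $2v(0)$ at $+\infty$). But rather than retreating to a ``direct differentiation'' with interface bookkeeping, the clean route is to stay with Lemma~\ref{th:refl:lemma} and compute $\<\del\Ea(\mF x), v^*\>$ exactly. The key identity (the paper cites \cite[Lemma~12]{Or:2011a}, but it is elementary) is that for any $w$ with $\D w$ compactly supported,
\begin{displaymath}
  \<\del\Ea(\mF x), w\> = \sum_{\xi}\sum_{\rho} V_\rho(\mF\Rg)\,D_\rho w(\xi)
  = W'(\mF)\int_\R \D w\dx = W'(\mF)\bigl(w(+\infty)-w(-\infty)\bigr).
\end{displaymath}
Applied to $w=v^*$ this gives $2W'(\mF)v(0)$, hence $\<\del\E^*(\mF x),v\> = W'(\mF)v(0)$, which cancels the continuum boundary term $-W'(\mF)v(0)$ on the nose. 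So the very thing that broke your first attempt --- the nonzero limit of $v^*$ at $+\infty$ --- is precisely what produces the cancellation. Your proposed ``direct'' calculation would eventually recover this, but the reflection lemma already does all the work once you recognise that $\del\Ea(\mF x)$ acts as a boundary operator on non-compactly-supported test functions.
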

\begin{proof}
  {\it Proof of \eqref{eq:refl:patch_test}: } From Lemma
  \ref{th:refl:lemma} we obtain
  \begin{displaymath}
    \< \del\E^*(\mF x), v \> = \smfrac12 \< \del\Ea(\mF x), v^* \>, 
  \end{displaymath}
  where we note that $v^*$ does not necessarily belong to $\Usz$, but
  $\D v^*$ has compact support and hence the right-hand side is
  well-defined. Lemma 12 in \cite{Or:2011a} implies that
  \begin{equation}
    \label{eq:refl:delEa_dW}
    \< \del\Ea(\mF x), v^* \> = \dW(\mF) \int_\R \D v^*(x) \dx.
  \end{equation}
  (Note that \cite[Lemma 12]{Or:2011a} is in fact a 2D result,
  however, the 1D variant is proven verbatim using the 1D bond density
  formula \cite[Proposition 3.3]{Shapeev:2010a}. Alternatively,
  \eqref{eq:refl:delEa_dW} can be proven directly from
  \cite[Proposition 3.1]{Shapeev:2010a}.)

  Since $\D v^*$ is symmetric about the origin,
  \eqref{eq:refl:delEa_dW} implies that
  \begin{displaymath}
    \< \del\E^*(\mF), v \> = \smfrac12 \dW(\mF) \int_\R \D v^*(x) \dx =
    \dW(\mF) \int_{-\infty}^0 \D v(x) \dx = \dW(\mF) v(0).
  \end{displaymath}
  Inserting this into the definition of $\del\Erefl$, we obtain
  \begin{displaymath}
    \< \del\Erefl(\mF), v \> = \dW(\mF) v(0) + \int_0^\infty \dW(\mF) \D v(x)
    \dx = 0. 
  \end{displaymath}

  {\it Proof of \eqref{eq:refl:univstab}: } Applying again Lemma
  \ref{th:refl:lemma}, as well as the symmetry of $\D v^*$, we obtain
  \begin{align*}
    \< \Hrefl_\mF v, v \> &= \< \ddel\E^*(\mF) v, v \> + \ddW(\mF)
    \int_0^\infty |\D v|^2 \dx \\
    &= \smfrac12 \< \ddel\Ea(\mF x) v^*, v^* \> + \ddW(\mF) \| \D v
    \|_{L^2(0, \infty)}^2 \\
    &\geq \smfrac12 \gamma(\Ha_\mF) \| \D v^* \|_{L^2(\R)}^2 + \gamma(\Hc_\mF) \| \D v
    \|_{L^2(0, \infty)}\\
    &= \gamma(\Ha_\mF) \| \D v \|_{L^2(-\infty, 0)}^2 + \gamma(\Hc_\mF) \| \D v
    \|_{L^2(0, \infty)}^2 \geq \gamma(\Ha_\mF) \| \D v \|_{L^2(\R)}^2;
  \end{align*}
  that is, $\gamma(\Hrefl_\mF) \geq \gamma(\Ha_\mF)$.  Proposition \ref{th:cac_leq_ca} shows
  that this inequality is in fact an equality.
\end{proof}

\delfinal{\begin{remark}
  The reflection method $\Erefl$ is remarkably simple, both in its
  formulation and analysis. Unfortunately, the idea seems to be
  restricted to one dimension. Already for flat a/c interfaces in
  2D/3D, we observe that an antisymmetric reflection of a displacement
  does not give a generalisation of Lemma \ref{th:refl:lemma}, due to
  the fact that gradients do not become symmetric under this operation.
  %
\end{remark}}

\section{Stabilising the 1D QNL Method}
\label{sec:stab1d}
\subsection{The general strain gradient representation}
A key component in previous sharp stability analyses of a/c methods
was a decomposition of a/c hessians into the Cauchy--Born hessian and
a strain gradient correction \cite{Dobson:qce.stab, Ortner:qnl.1d,
  XHLi:3n}. Here, we generalise these representations to general
many-body finite range interactions.

\begin{lemma}
  \label{th:prod_D_lemma}
  For $\xi \in \Z, \rho \in \Rg$, define the sets 
  \begin{displaymath}
    A(\xi,\rho) := \cases{
      \{ \xi, \dots, \xi+\rho-1 \}, & \rho > 0, \\
      \{ \xi+\rho, \dots, \xi-1 \}, & \rho < 0.
    }
  \end{displaymath}
  Then, for $\xi \in \Z, \rho, \vsig \in \Rg$,
  \begin{displaymath}
    D_\rho u(\xi) D_\vsig u(\xi) = \frac{\rho\vsig}{2|\rho||\vsig|} \sum_{\eta \in A(\xi,\rho)}
    \sum_{\eta' \in A(\xi,\vsig)} \b\{ |D_1 u(\eta)|^2 + |D_1
    u(\eta')|^2 - |D_1u(\eta) - D_1 u(\eta')|^2 \b\}.
  \end{displaymath}
\end{lemma}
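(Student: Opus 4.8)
The plan is to reduce the claim to the elementary telescoping formula that writes a general finite difference as a signed sum of nearest-neighbour differences, and then to apply the polarisation identity $ab = \tfrac12\big(a^2 + b^2 - (a-b)^2\big)$ term by term.

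First I would record that, for every $\xi \in \Z$ and $\rho \in \Rg$,
\[
  D_\rho u(\xi) \;=\; \sign(\rho) \sum_{\eta \in A(\xi,\rho)} D_1 u(\eta).
\]
When $\rho > 0$ this is just the telescoping sum $u(\xi+\rho) - u(\xi) = \sum_{k=0}^{\rho-1}\big(u(\xi+k+1) - u(\xi+k)\big) = \sum_{\eta=\xi}^{\xi+\rho-1} D_1 u(\eta)$, and $\{\xi,\dots,\xi+\rho-1\}$ is by definition $A(\xi,\rho)$. When $\rho < 0$ the same computation, run from $\xi+\rho$ up to $\xi$, gives $u(\xi+\rho)-u(\xi) = -\sum_{\eta=\xi+\rho}^{\xi-1} D_1 u(\eta)$, and $\{\xi+\rho,\dots,\xi-1\} = A(\xi,\rho)$; this is the only point at which the two cases in the definition of $A$ play a role.

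Multiplying the identity for $\rho$ with the identity for $\vsig$ then yields
\[
  D_\rho u(\xi)\, D_\vsig u(\xi) \;=\; \sign(\rho)\sign(\vsig) \sum_{\eta \in A(\xi,\rho)} \sum_{\eta' \in A(\xi,\vsig)} D_1 u(\eta)\, D_1 u(\eta'),
\]
and $\sign(\rho)\sign(\vsig) = \rho\vsig/(|\rho|\,|\vsig|)$. Applying the polarisation identity to each summand, with $a = D_1 u(\eta)$ and $b = D_1 u(\eta')$, produces exactly the right-hand side of the lemma.

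There is no genuine obstacle here; the argument is pure bookkeeping. The only place that warrants a moment's care is the sign and the precise shape of $A(\xi,\rho)$ for negative $\rho$, which I would sanity-check on the smallest case $\rho = \vsig = -1$, where the formula must reduce to $|D_{-1}u(\xi)|^2 = |D_1 u(\xi-1)|^2$.
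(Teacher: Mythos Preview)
Your proposal is correct and follows essentially the same approach as the paper: the paper also first writes $D_\rho u(\xi) = \frac{\rho}{|\rho|}\sum_{\eta\in A(\xi,\rho)} D_1 u(\eta)$, multiplies the two expressions, and then applies the polarisation identity $ab = \tfrac12(a^2+b^2-(a-b)^2)$ termwise. Your version is slightly more explicit in verifying the telescoping sum for negative $\rho$, but otherwise the arguments are identical.
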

\begin{proof}
  It is clear from the definitions that
  \begin{displaymath}
    D_\rho u(\xi) = \frac{\rho}{|\rho|} \sum_{\eta \in A(\xi,\rho)}
    D_1 u(\eta),
  \end{displaymath}
  and therefore,
  \begin{displaymath}
    D_\rho u(\xi) D_\vsig u(\xi) = \frac{\rho\vsig}{|\rho||\vsig|} 
    \sum_{\eta \in A(\xi,\rho)}
    \sum_{\eta' \in A(\xi,\vsig)}  D_1 u(\eta) D_1 u(\eta').
  \end{displaymath}
  Applying the identity
  \begin{displaymath}
    D_{1} u(\eta)  D_{1} u(\eta') = \smfrac12 |D_{1} u(\eta)|^2 +
    \smfrac12 |D_{1} u(\eta')|^2 - \smfrac12 |D_{1} u(\eta) - D_{1} u(\eta')|^2,
  \end{displaymath}
  yields the stated result.
\end{proof}

\begin{lemma}
  \label{th:qnl:sgrad_c0_eq_ddW}
  Let $\Hac_\mF$ be of the general form \eqref{eq:qnl:hessian}, then
  \begin{equation}
    \label{eq:qnl:sgrad_2}
    \< \Hac_\mF u, u \> = \< \Hc_\mF u, u \> + \<
    \Delta^\ac_\mF u, u \>,
  \end{equation}
  where 
  \begin{align}
    \label{eq:defn_Delta_acF}
    \< \Delta^\ac_\mF u, u \> &= \sum_{j = 1}^{2\rcut-1}
    \sum_{\xi = -\infty}^0 c_j(\xi) |D_{1}u(\xi)-D_{1}u(\xi-j)|^2 \qquad
    \text{with} \\
    \notag
    c_j(\xi) &= \sum_{\rho,\vsig \in \Rg} \frac{\rho\vsig}{2|\rho||\vsig|} \sum_{\substack{\eta \in
        \Z_- \\
        \xi \in A(\eta,\rho), \xi-j \in A(\eta,\vsig)}} \tilde
    V_{\eta,\rho\vsig}(\mF \Rg).
  \end{align}
\end{lemma}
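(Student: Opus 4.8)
The plan is to expand the atomistic part of \eqref{eq:qnl:hessian} with Lemma~\ref{th:prod_D_lemma} and then to separate the resulting ``pure gradient'' contributions from the ``gradient difference'' ones. Applying the identity of Lemma~\ref{th:prod_D_lemma} to each product $D_\rho u(\xi) D_\vsig u(\xi)$ appearing in $\< \Hac_\mF u, u\>$, and regrouping, one is led to a representation of the form
\[
  \< \Hac_\mF u, u \> \;=\; \sum_{m \in \Z} p(m)\, |D_1 u(m)|^2 \;+\; (\text{gradient difference terms}),
\]
where $p(m)$ collects the atomistic coefficients $\tilde V_{\xi,\rho\vsig}$ (through all sets $A(\xi,\rho)$ containing $m$) together with the contribution of the Cauchy--Born integral. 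Since $\< \Hac_\mF u, u\>$ is a quadratic form, the same computation gives its associated bilinear form $\< \Hac_\mF u, v\> = \sum_m p(m)\,D_1 u(m) D_1 v(m) + (\text{gradient difference cross terms})$. There are then two things to prove: (i) $p(m) = \ddW(\mF)$ for every $m$, so that the pure gradient part is exactly $\ddW(\mF)\|\D u\|_{L^2(\R)}^2 = \< \Hc_\mF u, u\>$; and (ii) the surviving gradient difference terms reorganise into the form \eqref{eq:defn_Delta_acF}.

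For (i), I would use the ghost-force condition \eqref{eq:qnl:force_consistency} in the form ``$x$ lies in the kernel of $\Hac_\mF$'', where $x : \xi \mapsto \xi$. Since \eqref{eq:qnl:force_consistency} holds for every $\mF > 0$, differentiating $\< \del\Eac(\mF x), v\> = 0$ with respect to $\mF$ (equivalently, testing the second variation at $\mF x$ in the homogeneous direction) yields $\< \Hac_\mF x, v\> = 0$ for all $v \in \Usz$; the pairing is well defined because $D_\vsig v$ has bounded support. Inserting $u = x$ into the bilinear representation above makes every gradient difference factor vanish (as $D_1 x \equiv 1$) and leaves $\sum_{m} p(m)\, D_1 v(m) = 0$ for all $v \in \Usz$. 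Because $\{ D_1 v : v \in \Usz \}$ is precisely the set of finitely supported functions with vanishing sum, this forces $p$ to be constant on $\Z$. To identify the constant, evaluate $p(m)$ at a site $m \le \xi_0 - \rcut$ deep in the atomistic region: there $\tilde V_{\xi,\rho\vsig} = V_{\rho\vsig}$ for all contributing $\xi$ by \eqref{eq:qnl:finite_interface}, the continuum term does not contribute, and counting that exactly $|\rho|$ of the sets $A(\xi,\rho)$ contain $m$ gives $p(m) = \sum_{\rho,\vsig \in \Rg} V_{\rho\vsig}\,\rho\vsig = \ddW(\mF)$, using $W(\mG) = V(\mG\Rg)$. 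Hence $p \equiv \ddW(\mF)$ and (i) follows.

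For (ii), I would swap the order of summation in the gradient difference part, collecting for each pair of endpoints $\xi, \xi - j$ (with $j \ge 1$) the total coefficient coming from all sites $\eta \in \Z_-$ and all $\rho,\vsig \in \Rg$ with $\xi \in A(\eta,\rho)$ and $\xi - j \in A(\eta,\vsig)$; using the symmetry $\tilde V_{\eta,\rho\vsig} = \tilde V_{\eta,\vsig\rho}$ to merge the two choices of which endpoint plays which role, this produces exactly $\sum_{\xi \le 0} \sum_{j \ge 1} c_j(\xi)\,|D_1 u(\xi) - D_1 u(\xi - j)|^2$ with $c_j(\xi)$ as in \eqref{eq:defn_Delta_acF}. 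The bound $j \le 2\rcut - 1$ holds because, for a fixed site $\eta$, every element of $A(\eta,\rho) \cup A(\eta,\vsig)$ lies in the window $\{ \eta - \rcut, \dots, \eta + \rcut - 1\}$, of width $2\rcut - 1$; and only indices $\xi \le 0$ occur because \eqref{eq:qnl:noninteraction} forces $\tilde V_{\eta,\rho} \equiv 0$, hence $\tilde V_{\eta,\rho\vsig} = 0$, whenever $\eta + \rho > 1$, so the top endpoint $\eta + \rho - 1$ of $A(\eta,\rho)$ never exceeds $0$.

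The only genuinely non-routine step is (i): the collapse of the pure gradient coefficients to the single constant $\ddW(\mF)$ is exactly where force-consistency \eqref{eq:qnl:force_consistency} enters — without it one would be left with a $\xi$-dependent, first-order (``ghost-force'') defect concentrated on the interface bonds that can be absorbed neither into $\< \Hc_\mF u, u\>$ nor into the gradient difference terms. Step (ii) is bookkeeping, but one must be careful with the endpoint index ranges and with the symmetrisation factor when reading off $c_j(\xi)$.
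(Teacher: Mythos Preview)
Your proposal is correct and follows essentially the same route as the paper: apply Lemma~\ref{th:prod_D_lemma}, separate pure gradient terms from gradient differences, use the force-consistency condition \eqref{eq:qnl:force_consistency} (differentiated in $\mF$) to conclude that the pure gradient coefficient is constant, and reorganise the remainder into $\Delta^\ac_\mF$. The only noticeable difference is how you pin down the constant: the paper reads it off on the continuum side, observing that \eqref{eq:qnl:noninteraction} forces $c_0(\xi)=\ddW(\mF)$ for $\xi\ge 1$ directly from the $\int_{1/2}^\infty \ddW(\mF)|\D u|^2$ term, whereas you evaluate deep in the atomistic region via \eqref{eq:qnl:finite_interface} and a counting argument yielding $\sum_{\rho,\vsig}V_{\rho\vsig}\rho\vsig=\ddW(\mF)$. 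Both are valid; the paper's choice is marginally shorter since it avoids the counting.
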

\begin{proof}
  Applying Lemma \ref{th:prod_D_lemma} to the representation
  (\ref{eq:qnl:hessian}) of the QNL hessian, we immediately obtain
  that
  \begin{equation}
    \label{eq:1d:ddelEac:prelim}
    \< \Hac_\mF u, u \> = \sum_{\xi \in \Z} c_0(\xi) |D_1 u(\xi)|^2 + \<
    \Delta^\ac_\mF u, u \>,
  \end{equation}
  where $\Delta^\ac_\mF$ is of the form \eqref{eq:defn_Delta_acF}, and
  $c_0(\xi) \in \R$ are some coefficients that still need to be
  determined. The stated identity for $c_j(\xi), j \geq 1$ in the
  definition of the strain gradient operator $\Delta^\ac_\mF$, follows
  from a straightforward exchange of summation.

  To determine $c_0(\xi)$, we first note that
  \eqref{eq:qnl:noninteraction} implies $c_0(\xi) = \ddW(\mF)$ for
  $\xi \geq 1$.
 
  To determine the remaining coefficients we apply the
  force-consistency condition (\ref{eq:qnl:force_consistency}). We
  know from (\ref{eq:qnl:force_consistency}) that
  \begin{displaymath}
    \<\del\Eac((\mF+t\mG) x), v \> = 0 \qquad \forall v \in \Usz,
  \end{displaymath}
  for all $\mF > 0$, $\mG \in \R$ and $t$ sufficiently small. Taking
  the derivative with respect to $t$, evaluated at $t = 0$, yields
  \begin{displaymath}
    \< \ddel\Eac(\mF x) \mG x, v \> = 0 \qquad \forall v \in \Usz,
  \end{displaymath}
  or, written in terms of the representation
  \eqref{eq:1d:ddelEac:prelim},
  \begin{displaymath}
    \sum_{\xi \in \Z} c_0(\xi) {(\mG\cdot a_1)} D_{1} v(\xi) 
  + \< \Delta^\ac_\mF \mG x, v \> = 0,
  \end{displaymath}
  where we extended the definition of $c_0(\xi)$ by $c_0(\xi) =
  \ddW(\mF)$ for $\xi > 0$.

  Since $\mG x$ is an affine function, $\< \Delta^\ac_\mF \mG x, v \>
  = 0$, and hence we obtain that
  \begin{displaymath}
    \sum_{\xi \in \Z} c_0(\xi) D_{1} v(\xi)  = 0 \qquad \forall v \in \Usz.
  \end{displaymath}
  This implies that $\xi \mapsto c_0(\xi)$ must be a constant, and in
  particular, $c_0(\xi) \equiv \ddW(\mF)$.
\end{proof}

\subsection{The stabilised QNL method}
We observed in Lemma \ref{th:qnl:sgrad_c0_eq_ddW} that the QNL hessian
can be written as the Cauchy--Born hessian with a strain gradient
correction in the atomistic and interface region. Moreover, due to the
``bounded interface condition'' \eqref{eq:qnl:finite_interface}, we
know that the strain gradient correction is the same for the QNL and
for the reflection hessians, except in a bounded neighbourhood of the
interface. More precisely, we can write
\begin{equation}
  \label{eq:qnl1d:Hqnl.minus.Hrfl}
  \< \Hqnl_\mF u, u \> = \< \Hrefl_\mF u, u \> + \b\< (\Delta^\qnl_\mF
  - \Delta^\refl_\mF) u, u \>,
\end{equation}
where 
\begin{align*}
  \b\< (\Delta^\qnl_\mF
  - \Delta^\refl_\mF) u, u \> &= \sum_{j = 1}^{2\rcut-1} \sum_{\xi =
    \xi_1}^{-1} c_j'(\xi) |D_1 u(\xi) - D_1 u(\xi-j)|^2,
\end{align*}
for some $\xi_1 \leq 0$ that depends on $\xi_0$ and on $\rcut$, and
for coefficients $c_j'(\xi) := c_j(\xi) - c_j^\refl(\xi)$. If
$c_j'(\xi) \geq 0$ for all $\xi$, then we would obtain that $\<
\Hqnl_\mF u, u \> \geq \< \Hrefl_\mF u, u \>$ and hence the QNL method
is universally stable.

If $c_j'(\xi) < 0$ for some $j, \xi$, then we can redefine a {\em
  stabilised QNL energy}
\begin{displaymath}
  \label{eq:1dqnl:defn_Estab}
  \Estab(y) := \Eqnl(y) + \kappa \< S u, u \>, \qquad \text{for } y = \mF x
  + u, u \in \Usz,
\end{displaymath}
where $\kappa > 0$ is a stabilisation constant and $S$ is the
stabilisation operator defined through
\begin{equation}
  \label{eq:qnl1d:defn_S}
  \< S u, u \> := \sum_{\xi = \xi_1-2\rcut+2}^{-1} |D_{-1}D_1 u(\xi)|^2.
\end{equation}
Because the stabilisation involves only second derivatives, this
modification does not affect the first-order consistency of the QNL
method; see Remark \ref{rem:cons_stabQNL}.

\begin{theorem}
  Fix a bounded set $\mathcal{F} \subset \R$ (a range of macroscopic
  strains $\mF$ of interest). Then there exists a constant $\kappa_0
  \geq 0$ such that, for all $\kappa \geq \kappa_0$ and for all $\mF
  \in \mathcal{F}$, $\ddel\Estab(\mF x)$ is stable if and only if
  $\Ha_\mF$ is stable.

  An upper bound on $\kappa_0$ is given by
  \begin{displaymath}
    \kappa_0 \leq \sup_{\mF \in \mathcal{F}} \sum_{\rho,\vsig \in \Rg} (|\rho|+|\vsig|)^2
    |\rho||\vsig| {\sup_{\xi\in\Z_-}} \b| V_{\xi,\rho\vsig}(\mF
    \Rg) - V^\refl_{\xi,\rho\vsig}(\mF) \b|,
  \end{displaymath}
  where $V^\refl_\xi$ is the effective site potential of the
  reflection scheme.
\end{theorem}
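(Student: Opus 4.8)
The plan is to combine the strain-gradient representation \eqref{eq:qnl1d:Hqnl.minus.Hrfl} with a coercivity-type bound showing that the stabilisation operator $S$ controls the (possibly negative) near-interface correction $\Delta^\qnl_\mF - \Delta^\refl_\mF$. Concretely, I would write
\begin{align*}
  \< \ddel\Estab(\mF x) u, u \> &= \< \Hrefl_\mF u, u \> + \b\< (\Delta^\qnl_\mF - \Delta^\refl_\mF) u, u \> + \kappa \< S u, u \>,
\end{align*}
and use the universal stability $\gamma(\Hrefl_\mF) = \gamma(\Ha_\mF)$ from the previous section to reduce the problem to showing that the last two terms together are nonnegative for $\kappa$ large enough. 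The first direction (stability of $\ddel\Estab$ implies stability of $\Ha_\mF$) is the easy one and follows from a minor variant of Proposition \ref{th:cac_leq_ca}: since the stabilisation is supported on a bounded window, a test function $u$ supported far to the left in the atomistic region is unaffected by both the interface modification and $S$, so $\< \ddel\Estab(\mF x) u, u \> = \< \Ha_\mF u, u \>$ for such $u$, giving $\gamma(\ddel\Estab(\mF x)) \le \gamma(\Ha_\mF)$.

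For the nontrivial direction, the key step is a pointwise estimate: each term $c_j'(\xi) |D_1 u(\xi) - D_1 u(\xi-j)|^2$ appearing in $\Delta^\qnl_\mF - \Delta^\refl_\mF$ must be dominated by a nonnegative multiple of $\< S u, u \>$. Since $D_1 u(\xi) - D_1 u(\xi - j) = \sum_{k=1}^{j} D_{-1} D_1 u(\xi - k + 1)$ is a telescoping sum of at most $j \le 2\rcut - 1$ consecutive second differences, each of which appears (because of the choice of the summation range $\xi_1 - 2\rcut + 2$ in \eqref{eq:qnl1d:defn_S}) in $\< S u, u \>$, Cauchy--Schwarz gives
\begin{displaymath}
  |D_1 u(\xi) - D_1 u(\xi - j)|^2 \le j \sum_{k=1}^{j} |D_{-1} D_1 u(\xi - k + 1)|^2 \le (2\rcut-1) \< S u, u \>.
\end{displaymath}
Summing over the finitely many indices $j \in \{1, \dots, 2\rcut-1\}$ and $\xi \in \{\xi_1, \dots, -1\}$ and discarding the nonnegative terms (those with $c_j'(\xi) \ge 0$), one obtains $\b\< (\Delta^\qnl_\mF - \Delta^\refl_\mF) u, u \> \ge - C(\mF) \< S u, u \>$ for an explicit $C(\mF)$ built from the magnitudes $|c_j'(\xi)|$. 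Taking $\kappa_0 := \sup_{\mF \in \mathcal{F}} C(\mF)$ then makes the bracketed correction plus $\kappa \< Su, u \>$ nonnegative, hence $\gamma(\ddel\Estab(\mF x)) \ge \gamma(\Hrefl_\mF) = \gamma(\Ha_\mF)$, completing the equivalence. The supremum over the bounded strain set $\mathcal{F}$ is finite because $V \in C^2$ makes $\mF \mapsto V_{\xi,\rho\vsig}(\mF\Rg)$ continuous and, by \eqref{eq:qnl:finite_interface}, only finitely many sites $\xi$ contribute nontrivially.

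The remaining work is bookkeeping to pass from the abstract constant $C(\mF)$ to the stated explicit upper bound on $\kappa_0$. Here one traces the definition \eqref{eq:defn_Delta_acF} of $c_j(\xi)$: the coefficient $c_j'(\xi) = c_j(\xi) - c_j^\refl(\xi)$ is a signed sum over pairs $\rho, \vsig \in \Rg$ of terms $\frac{\rho\vsig}{2|\rho||\vsig|} \b(\tilde V_{\eta,\rho\vsig} - V^\refl_{\eta,\rho\vsig}\b)$, where $\eta$ ranges over the (at most $\min(|\rho|,|\vsig|)$, and in any case $O(|\rho|+|\vsig|)$) sites with $\xi \in A(\eta,\rho)$ and $\xi - j \in A(\eta,\vsig)$; moreover $j$ ranges over at most $|\rho|+|\vsig|$ values for each $\rho,\vsig$ pair. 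Multiplying the per-term factor $(2\rcut-1)$ from the Cauchy--Schwarz step — or more sharply the factor $j \le |\rho|+|\vsig|$ — and the counting factors gives, after crudely bounding $|A(\eta,\rho)| \le |\rho|$ and summing the $j$-range, a bound of the form $\sum_{\rho,\vsig} (|\rho|+|\vsig|)^2 |\rho||\vsig| \sup_\xi |V_{\xi,\rho\vsig}(\mF\Rg) - V^\refl_{\xi,\rho\vsig}(\mF)|$, which is exactly the claimed expression. The main obstacle is not any single estimate but keeping the combinatorial counting of contributing $(\eta, j)$ pairs tight enough that the final power of $(|\rho|+|\vsig|)$ comes out as stated rather than larger; one should be willing to accept a non-optimal constant here, since only the finiteness and the qualitative scaling matter for the theorem.
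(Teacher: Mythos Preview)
Your proposal is correct and follows essentially the same approach as the paper: decompose $\Hstab_\mF$ via \eqref{eq:qnl1d:Hqnl.minus.Hrfl}, handle the easy direction by shifting test functions into the pure atomistic region, and for the main direction telescope $D_1 u(\xi) - D_1 u(\xi-j)$ into second differences and apply Cauchy--Schwarz. The only notable difference is in the bookkeeping for the explicit bound on $\kappa_0$: your intermediate step bounds each interface term by $(2\rcut-1)\<Su,u\>$ and then sums, which would produce extra factors of $|\xi_1|$ and $\rcut$; the paper instead reorders the double sum over $(j,\xi)$ into a single sum $\sum_\eta d'(\mF,\eta)|D_{-1}D_1 u(\eta)|^2$ and takes $\kappa_0 = \max_\eta d'(\mF,\eta)$, which is what yields the stated expression without spurious factors. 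You gesture at this sharper route (``or more sharply the factor $j \le |\rho|+|\vsig|$''), so the gap is only one of organisation rather than of ideas.
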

\begin{proof}
  We know from Proposition \ref{th:cac_leq_ca} that, if $\Ha_\mF$ is
  unstable, then $\Hstab_\mF$ is unstable, so we only need to prove
  the converse statement.

  Since the reflection method is universally stable, it follows from
  \eqref{eq:qnl1d:Hqnl.minus.Hrfl} that it is sufficient to prove that
  \begin{displaymath}
    \b\< (\Delta^\qnl_\mF
    - \Delta^\refl_\mF) u, u \>  + \kappa \< S u, u \> \geq 0,
  \end{displaymath}
  for $\kappa$ sufficiently large. To prove that this is indeed the
  case, we simply compute an upper bound on $|\b\< (\Delta^\qnl_\mF -
  \Delta^\refl_\mF) u, u \>|$:
  \begin{align*}
    \b|\< (\Delta^\qnl_\mF -
  \Delta^\refl_\mF)u, u \>\b| &\leq \sum_{j = 1}^{2\rcut-1} \sum_{\xi =
      \xi_1}^{-1} |c_j'(\xi)| |D_1 u(\xi) - D_1 u(\xi-j)|^2 \\
    &\leq \sum_{j = 1}^{2\rcut-1} \sum_{\xi =
      \xi_1}^{-1} |c_j'(\xi)| j \sum_{\eta = \xi-j+1}^{\xi} \b|D_{-1}D_1
    u(\eta)\b|^2,
  \end{align*}
  where we used the Cauchy--Schwarz (or, Jensen's) inequality. Upon
  reordering the summation, we obtain
  \begin{align*}
    \b|\< (\Delta^\qnl_\mF -
  \Delta^\refl_\mF) u, u \>\b| &\leq \sum_{\eta = \xi_1 - 2 \rcut +
      2}^{-1} \b|D_{-1}D_1
    u(\eta)\b|^2 \Bg\{ \sum_{j = \max(1, \xi_1 - \eta + 1)}^{2\rcut-1}
    \sum_{\xi = \max(\eta, \xi_1)}^{\min(\eta+j-1, -1)} |c_j'(\xi)| j
    \Bg\} \\
    &=: \sum_{\eta = \xi_1 - 2 \rcut +
      2}^{-1} \b|D_{-1}D_1
    u(\eta)\b|^2 d'(\mF, \eta).
  \end{align*}
  Letting $\kappa_0 := \max_{\eta, \mF \in \mathcal{F}} d'(\mF, \eta)$
  yields the result.

  To get an upper bound on this quantity, we next estimate
  $|c_j'(\xi)|$.
  %
  Let 
  \begin{displaymath}
    m'(\rho,\vsig) := \sup_{\xi\in\Z_-} \sup_{\mF \in \mathcal{F}}
    |V_{\xi,\rho\vsig} - V_{\xi,\rho\vsig}^\refl|,
  \end{displaymath}
  then
  \begin{align*}
    |c_j'(\xi)| \leq \frac12 \sum_{\rho,\vsig \in \Rg}
    \sum_{\substack{\eta \in \Z_- \\ \xi \in A(\eta,\rho), \xi-j \in
        A(\eta,\vsig)}} m'(\rho,\vsig),
  \end{align*}
  and noting that the sum over $\eta$ is taken over at most
  $\min(|\rho|,|\vsig|)$ sites and moreover that only the sum over
  $\rho,\vsig$ satisfying $|\rho| + |\vsig| \geq j$ needs to be taken
  into account, we obtain
  \begin{displaymath}
    |c_j'(\xi)| \leq \frac12 \sum_{\substack{\rho,\vsig \in \Rg \\
        |\rho|+|\vsig| \geq j}}
    \min(|\rho|,|\vsig|) m'(\rho,\vsig).
  \end{displaymath}
  Inserting this estimate into the definition of $d'(\mF, \eta)$ gives
  \begin{align*}
    d'(\mF, \eta) &\leq \sum_{j = \max(1, \xi_1 - \eta + 1)}^{2\rcut-1}
    \sum_{\xi = \max(\eta, \xi_1)}^{\min(\eta+j-1, -1)} \sum_{\substack{\rho,\vsig \in \Rg \\
        |\rho|+|\vsig| \geq j}}
    \smfrac12 \min(|\rho|,|\vsig|) (|\rho|+|\vsig|) m'(\rho,\vsig),
  \end{align*}
  where we estimated $j \leq (|\rho|+|\vsig|)$. Next, using $\smfrac12
  \min(|\rho|,|\vsig|) (|\rho|+|\vsig|) \leq |\rho||\vsig|$, and
  noting that the sum over $\xi$ ranges over at most $j$ values, we
  further estimate
  \begin{align*}
    d'(\mF, \eta)  &\leq  \sum_{j = \max(1, \xi_1 - \eta +
      1)}^{2\rcut-1} j \sum_{|\rho|+|\vsig| \geq j} |\rho||\vsig|
    m'(\rho,\vsig) \\
    &\leq \sum_{\rho,\vsig \in \Rg} |\rho||\vsig|
    m'(\rho,\vsig)
    \sum_{j = 1}^{\min(2\rcut-1, |\rho|+|\vsig|)} j 
    \leq \sum_{\rho,\vsig \in \Rg} |\rho||\vsig| (|\rho|+|\vsig|)^2
    m'(\rho,\vsig).
  \end{align*}
  This establishes the estimate for $\kappa_0$.
\end{proof}

\begin{remark}[Consistency of the stabilised QNL method]
  \label{rem:cons_stabQNL}
  If the cost of stabilising the QNL method is a loss in consistency,
  then little can be gained by the procedure proposed in the foregoing
  section. However, (ignoring finite element coarsening of the
  continuum region) it is easy to show that
  \begin{displaymath}
    \| \del\Estab(u) - \del\Ea(u) \|_{\Us^*} \leq \| \del\Eqnl(u) -
    \del\Ea(u) \|_{\Us^*} + 2 \kappa_0 \| D_{-1}D_1 u
    \|_{\ell^2(\mathcal{I})}, 
  \end{displaymath}
  where $\mathcal{I} := \{ \xi_1 - 2\rcut+1, \dots, -1 \}$.  That is,
  the additional consistency error committed by the stabilisation is
  of first-order, which is the same as the consistency error of the
  QNL method
  \cite{Ortner:qnl.1d,Or:2011a,PRE-ac.2dcorners,DobsonPRE}. 

  Moreover, the prefactor $\kappa_0$ is bounded in terms of the
  partial derivatives $V_{\xi,\rho\vsig}$. Having some uniform bound
  on these partial derivates $V_{\xi,\rho\vsig}$ is a prerequisite to
  obtain a first-order error estimate
  \cite{Or:2011a,PRE-ac.2dcorners}. For example, for geometric
  reconstruction type method
  \cite{Shimokawa:2004,E:2006,PRE-ac.2dcorners} one can show that
  these are bounded in terms of a norm on the reconstruction
  coefficients.

  In summary, we can conclude that the stabilisation
  \eqref{eq:qnl1d:Hqnl.minus.Hrfl} will normally not affect the
  consistency of the QNL method.
\end{remark}

\subsection{Numerical example}
We may now revisit the numerical example from
\S~\ref{sec:instab_qnl_num}, and add the universally stable reflection
method and the stabilised QNL method to the graph. We choose the QNL
stabilisation parameter $\kappa = 0.1$ by trial and error. The
extension of the two methods to the finite domain used in this
experiment is straightforward.

\begin{figure}
  \begin{center}
    \includegraphics[width=14cm]{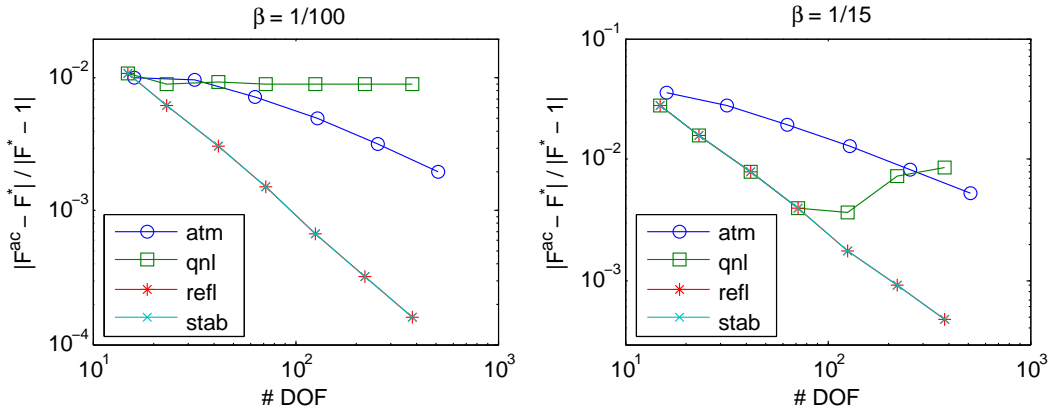}
    \caption{\label{fig:1dqnl_stab} Relative errors of critical
      strains for the QNL, REFL and stabilized QNL methods, and
      external forces parameterised by $\alpha = 1.5, \beta \in \{
      0.01, 0.066 \}$. The stabilisation parameter for the stabilised
      QNL method is $\kappa = 0.1$.}
  \end{center}
\end{figure}

The result is displayed in Figure \ref{fig:1dqnl_stab}. We observe
clear systematic convergence of the critical strains for both the
reflection method and the stabilised QNL method, which is consistent
with our analysis in the foregoing sections.

\bigskip

\bigskip

\section{QNL Formulation of a 2D Nearest-Neighbour Scalar Model}
\label{sec:qnl2d_model}
In the remainder of the paper we explore possible generalisations of
our foregoing results to higher dimensions. We are unable, at present,
to provide results of the same generality as in 1D, and we therefore
restrict our presentation to the setting of nearest-neighbour many
body interactions for scalar displacement fields (e.g., anti-plane
displacements) in two dimensions, with a ``flat'' a/c
interface. Already in this simple setting, we will encounter several
difficult new issues that must be overcome before focusing on the even
more challenging vectorial case, and general interface
geometries. (Admitting a wider interaction range does not seem to
cause major additional difficulties.)

\subsection{Notation for the 2D triangular lattice}
Our 2D analysis is most convenient to perform in the setting of the 2D
triangular lattice, which we denote by
\begin{displaymath}
  \L := \mA \Z^2, \quad \text{where} \quad
  \mA = \begin{pmatrix} 1 & \cos(\pi/3) \\ 0 & \sin(\pi/3) \end{pmatrix}.
\end{displaymath}
For future reference, we define the six nearest-neighbour lattice
directions by $a_1 := (1, 0)$, and $a_j := \mQ_6^{j-1} a_1$, $j \in
\Z$, where $\mQ_6$ denotes the rotation through angle $2\pi/6$ and we
note that $a_{j+3} = - a_j$.

For a lattice function $w : \L \to \R$, we define the
nearest-neighbour differences
\begin{displaymath}
  D_j w(\xi) := w(\xi + a_j) - w(\xi).
\end{displaymath}
The interaction range is defined as $\Rg = \{ a_1, \dots, a_6 \}$ and
the corresponding finite difference stencil by $Dw(\xi) = \{ D_j
w(\xi) \}_{j = 1}^6.$ Let $\| Dw \|_{\ell^2} :=
(\sum_{\xi\in\L}\sum_{j = 1}^3 |D_j w(\xi)|^2)^{1/2}$.

Let $\T$ denote the canonical triangulation of $\R^2$ with nodes $\L$,
using {\em closed} triangles, then each lattice function $v$ is
identified with its continuous piecewise affine interpolant with
respect to $\T$. In particular, we define $\D v_T$ to be the gradient
of $v$ in $T \in \T$ and we note that $\D v_T \cdot a_j = D_j v(\xi)$
if $\xi, \xi+a_j \in T$.

The space of admissible test functions is again the space of compactly
supported lattice functions, defined by
\begin{displaymath}
  \Usz := \b\{ u : \L \to \R \bsep {\rm supp}(u) \text{ is bounded} \b\}.
\end{displaymath}

For an operator $H : \Usz \to \Usz^*$ we define again $\gamma(H) :=
\inf_{u \in \Usz, \| \D u \|_{L^2} = 1} \< H u, u \>$.

\subsection{2D many-body nearest neighbour interactions}
We fix a nearest-neighbour many-body (i.e., 7-body) potential $V \in
C^2(\R^6)$, with partial derivatives
\begin{displaymath}
  V_{i}(\bfg) = \frac{\partial V(\bfg)}{\partial g_i} \quad \text{and}
  \quad
  V_{ij}(\bfg) = \frac{\partial^2 V(\bfg)}{\partial g_i \partial g_j}
  \quad \text{ for } \bfg = (g_i)_{i = 1}^6 \in \R^6.
\end{displaymath}

For a deformed configuration $y = \mF \cdot x + u$ (where $x(\xi) =
\xi$ and $\mF \in \R^2$) we define the energy difference by
\begin{equation}
  \label{eq:defn_2d_atm}
  \Ea(y) = \sum_{\xi\in\L} \b[ V(Dy(\xi)) - V(\mF \Rg) \b].
\end{equation}
Since the sum is effectively finite $\Ea$ is well-defined and admits
two variations in the sense of Gateaux derivatives, with the second
variation given by 
\begin{align*}
  \< \ddel\Ea(y) v, v \> &= \sum_{\xi \in \L} \sum_{i,j = 1}^6
  V_{ij}(Dy(\xi)) \cdot D_i v(\xi) D_j v(\xi).
\end{align*}

We are again particularly interested in homogeneous states $y(x) = \mF
x$ and define
\begin{equation}
  \label{eq:2d:HaF_defn}
  \< \Ha_\mF u, u \> = \sum_{\xi\in\L} \sum_{i,j = 1}^6 V_{ij} \cdot D_i
  u(\xi) D_j u(\xi),
\end{equation}
where, here and throughout we omit the argument $\mF \Rg$ in $V_{ij}$
when it is clear from the context that we mean $V_{ij}(\mF\Rg)$.

\subsubsection{Symmetries}
Inversion symmetry about each lattice point leads us to assume that
$V( (g_i)_{i = 1}^6 ) = V( (-g_{i'})_{i = 1}^6 )$, where $i' \in
\{1,\dots,6\}$ such that $a_{i'} = - a_i$. This yields the point
symmetry for the second derivatives $V_{i,j}(\mF\Rg) =
V_{i',j'}(\mF\Rg)$ for $i, j \in \{1,\dots, 6\}$; see, e.g.,
\cite{OrtnerTheil2012}.
Since the reference lattice $\L$ has full hexagonal symmetry, it
  is reasonable to make the stronger assumption that $V$ has full
  hexagonal symmetry as well, i.e.,
\begin{equation}
  V( \bfg) = V(g_6, g_1,
  \dots, g_5).
\end{equation} 
In this case, but only for the deformation $\mF = \mO$, one can
readily deduce the identities
\begin{equation}
  \label{eq:2d:hex_symm}
\begin{split}
  V_{1,1} = \dots = V_{6,6} &=: \alpha_0, \\
  V_{1,2} = \dots = V_{5,6} = V_{6,1} &=: \alpha_1, \\
  V_{1,3} = \dots = V_{4,6} = V_{5,1} = V_{6,2} &=: \alpha_2, \quad
  \text{and} \\
  V_{1,4} = V_{2,5} = V_{3,6} &=:  \alpha_3,
\end{split}
\end{equation}
where $V_{i,j} = V_{i,j}({\bf 0})$ and $\alpha_i \in \R$. 

Both symmetries can be derived, e.g., by reducing a 3D model to a
scalar 2D anti-plane model.

\subsection{QNL-type methods}
\label{sec:2d:def_qnl}
%
%
We define the Cauchy--Born approximation in a discrete sense,
\begin{displaymath}
  \Ec(y) := \frac12 \sum_{T \in \T} \b[ W(\D y_T) - W(\mF) \b],
\end{displaymath}
where $W(\mF) := V(\mF \Rg)$. Unusually, we have not normalised $W$
with respect to volume, which somewhat simplifies notation. (Since
  each site has associated volume $1$, each element has associated
  volume $3 / 6 = 1/2$.)

We define the atomistic and continuum lattice sites
\begin{displaymath}
  \La := \{ \xi \in \L \sep \xi_2 < 0 \}, \quad \Lc := \{ \xi \in \L
  \sep \xi_2 > 0 \}, 
\end{displaymath}
and in addition the $k$th ``row'' of atoms
\begin{displaymath}
  \L^{(k)} := \b\{ \xi \in \L \bsep \xi_2 = k \sqrt{3}/2 \b\},
\end{displaymath}
so that $\L^{(0)}$ is the set of interface lattice sites.

QNL methods are a/c coupling schemes with energy functional of the
form
\begin{align}
  E^\qnl(y) &:= \sum_{\xi \in \La} \b[ V(Dy(\xi)) - V(\mF \Rg) \b] 
  \label{eq:qnl2d:defn_Eqnl}
  +\sum_{\xi \in \L^{(0)}} \b[ \tilde{V}(Dy(\xi)) - V(\mF\Rg)
  \b]
  + \sum_{\xi \in \Lc} \frac13 \sum_{\substack{T \in \T \\
      \xi \in T}} \b[ W(\D y_T) - W(\mF) \b],
\end{align}
where $\tilde{V}$ is a modified interaction potential that is chosen
to transition between the atomistic and Cauchy--Born description. For
more detail we refer to \cite{Shimokawa:2004,E:2006,Or:2011a} and in
particular \cite{PRE-ac.2dcorners} which is closest in terms of
analytical setting and notation to our present work.

We assume throughout that $\tilde{V} \in C^2(\R^6)$, then the QNL
energy is well-defined for $y = \mF \cdot x + u, u \in \Usz$, and has
two variations in the sense of Gateaux derivatives.

We assume that $\Eqnl$ does not exhibit ghost forces,
\begin{equation}
  \label{eq:2d:qnl_nogf}
  \< \del\Eqnl(\mF x), v \> = 0 \qquad \forall v \in \Usz, \mF \in \R^2,
\end{equation}
and is {\em energy-consistent,}
\begin{equation}
  \label{eq:2d:qnl_energy_cons}
 \tilde{V}(\mF \Rg) = V(\mF\Rg)
 \qquad\forall \mF \in \R^2
 .
\end{equation}

Sometimes, to achieve a more compact notation, we write
\begin{displaymath}
  \Eqnl(y) = \sum_{\xi \in \La\cup\L^{(0)}} \b[\tilde{V}_\xi(Dy(\xi)) -
  V(\mF \Rg) \b] + \sum_{T \in \T} w_T \b[ W(\D y_T) - W(\mF) \b],
\end{displaymath}
where $\tilde{V}_\xi = \tilde{V}$ for $\xi \in \L^{(0)}$, $\tilde{V}_\xi
= V$ for $\xi \in \La$, and $w_T = \# (\Lc \cap T)/6$. The second
variation (hessian) at $y = \mF x$, $\Hac_\mF = \ddel\Eqnl(\mF x)$, is
then given by
\begin{equation}
  \label{eq:2d:qnl_hessian_defn}
  \< \Hac_\mF u, u \> = \sum_{\xi \in \La\cup\L^{(0)}} \sum_{i,j = 1}^6
  \tilde{V}_{\xi,ij} \cdot D_i u(\xi) D_j u(\xi) + \sum_{T \in \T} w_T
  (\D u_T)^\top \ddW(\mF) \D u_T,
\end{equation}
where $\ddW(\mF) \in \R^{2 \times 2}$ is the hessian of $W$.

As in the foregoing 1D results we shall focus exclusively on stability
at homogeneous states. We show in \S~\ref{sec:stab_inhom} how one may
extend such results to stability of non-homogeneous states including
defects.

We remark that the 2D variant of Lemma \ref{th:cc_geq_ca},
$\gamma(\Ha_\mF) \leq \gamma(\Hc_\mF)$, remains true
\cite{Hudson:stab}.


To illustrate that we are not talking about abstract methods, but
concrete practical formulations we now introduce three specific
variants.

\subsubsection{The QCE method}
The simplest QNL variant is the QCE method
\cite{Ortiz:1995a,Dobson:2008c}, which is defined by simply taking
$\tilde{V} = V$. It is shown in \cite{PRE-ac.2dcorners} that in our
present setting (nearest neighbour interaction, flat interface) it
satisfies the force-consistency condition \eqref{eq:2d:qnl_nogf}.

We denote the resulting energy functional by $\E^\qce$.

\subsubsection{The GRAC-2/3 method}
The QCE method does {\em not} satisfy the force consistency condition
\eqref{eq:2d:qnl_nogf} in domains with corners, nor for second
neighbour interactions
\cite{Shenoy:1999a,Shimokawa:2004,E:2006,Dobson:2008c,PRE-ac.2dcorners}
and it is still an open problem to formulate a general scheme that
does. A class of methods has been introduced in
\cite{PRE-ac.2dcorners}, extending ideas in \cite{Shimokawa:2004,
  E:2006}, which in our context can be defined through
\begin{displaymath}
  \tilde{V}(Dy) := V(\tilde{D} y), \quad \text{ where }\quad \tilde{D}_i y
  := \lambda_i D_{i-1} y + (1-\lambda_i) D_i y + \lambda_i D_{i+1} y,
\end{displaymath}
for $\lambda_i \in \R$. It is shown in \cite{PRE-ac.2dcorners} that,
for flat interfaces, all of these schemes satisfy
\eqref{eq:2d:qnl_nogf}, and for the choice
\begin{displaymath}
  \lambda_i = \cases{1/3, & i = 2, 3 \\
    0, & i = 1, 4, 5, 6},
\end{displaymath}
(and only for this choice) the resulting method (GRAC-2/3) can be
extended to domains with corners while still satisfying
\eqref{eq:2d:qnl_nogf}. We denote the resulting energy functional by
$\E^\grac$.

\subsubsection{The local reflection method}
Finally, we introduce a new a/c coupling scheme, inspired by our 1D
reflection method.

The idea is to apply the reflection method on each site $\xi \in
\L^{(0)}$, which amounts to defining
\begin{displaymath}
  \tilde{D}_i := \cases{ D_i, & i = 1, 4, 5, 6 \\
    - D_{i+3}, & i = 2, 3,
  } \quad \text{and} \quad  \tilde{V}(Dy) := \frac12 V(\tilde{D} y) + \frac16 \sum_{\substack{T \in \T^\c
      \\ \xi \in T}} W(\D y_T),
\end{displaymath}
where $\T^\c := \{ T \in \T \sep x_2 \geq 0 \text{ for all } x \in
T\}$.

The idea can be seen more clearly, if we write the resulting energy
functional in the form
\begin{align*}
  \E^\lref(y) &:= \sum_{\xi \in \La} \b[V(Dy(\xi))-V(\mF\Rg)\b] +
  \frac12 \sum_{\xi \in \L^{(0)}} \b[ V(\tilde{D}y(\xi))-V(\mF\Rg)\b] 
  + \frac12 \sum_{T \in \T^\c} \b[ W(\D y_T) - W(\mF) \b].
\end{align*}

It is straightforward to check that this method exhibits no ghost
forces.

\subsection{Atomistic and Cauchy--Born hessian representations}
\label{sec:2d:hessian_rep}
Our aim is to develop a generalisation of our 1D hessian
representation, Lemma \ref{th:qnl:sgrad_c0_eq_ddW}. Towards this end,
we first establish representations for the atomistic and Cauchy--Born
hessians. The result for the QNL hessian will be presented in
\S~\ref{sec:2d:qnl_hessian_rep}.

We first state two auxiliary lemmas. The first provides a mechanism
for establishing whether two symmetric bilinear forms are equal.

\begin{lemma}
  \label{th:equiv_hessians}
  Let $H_1, H_2$ be self-adjoint operators defined through
  \begin{displaymath}
    \< H_i u, u \> = \sum_{\xi \in \L} \sum_{j = 1}^3 h_{i,j}(\xi) |D_j u(\xi)|^2,
  \end{displaymath}
  then $H_1 = H_2$ if and only if $h_{1,j}(\xi) = h_{2,j}(\xi)$ for
  all $\xi \in \L$, $j = 1, \dots, 3$.
\end{lemma}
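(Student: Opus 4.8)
The statement is an ``if and only if'' where the reverse implication is trivial: if $h_{1,j}(\xi) = h_{2,j}(\xi)$ for all $\xi$ and $j \in \{1,2,3\}$, then termwise the two sums agree, so $\langle H_1 u, u\rangle = \langle H_2 u, u\rangle$ for all $u \in \Usz$, and since both operators are self-adjoint, the associated bilinear forms determine the operators, giving $H_1 = H_2$. So the plan is to concentrate on the forward direction: assume $\langle H_1 u, u\rangle = \langle H_2 u, u\rangle$ for all $u \in \Usz$ and deduce $h_{1,j}(\xi) = h_{2,j}(\xi)$. Setting $g_j(\xi) := h_{1,j}(\xi) - h_{2,j}(\xi)$, it suffices to show that $\sum_{\xi \in \L}\sum_{j=1}^3 g_j(\xi)\,|D_j u(\xi)|^2 = 0$ for all $u \in \Usz$ forces $g_j \equiv 0$.

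The key step is to pick, for each fixed lattice site $\xi_0 \in \L$ and each fixed bond direction $j_0 \in \{1,2,3\}$, a well-chosen test function $u \in \Usz$ that ``isolates'' the single term $g_{j_0}(\xi_0)$. The natural first attempt is the indicator-type displacement $u = \delta_{\xi_0+a_{j_0}} - \delta_{\xi_0}$ (or more simply a function supported on the two endpoints of the bond), for which $D_{j_0}u(\xi_0) \neq 0$; however such a $u$ also activates many other nearest-neighbour bonds incident to $\xi_0$ and $\xi_0 + a_{j_0}$, so a single test function will not do. Instead I would argue by a finite linear-algebra / induction-on-support argument: order the bonds of $\L$ and show that by taking successively larger families of test functions one can solve a triangular linear system for the $g_j(\xi)$. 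A cleaner route is to exploit the compact support: among all $\xi$ with $g_j(\xi) \neq 0$ for some $j$, pick an ``extreme'' one (e.g. maximal in lexicographic order on $\L$), and construct $u$ supported on a small neighbourhood so that in the identity $\sum g_j(\xi)|D_j u(\xi)|^2 = 0$ only the terms at that extreme site survive among the nonzero $g_j$; this yields a contradiction unless all $g_j$ vanish. Alternatively, and perhaps most transparently, one can use a Fourier/plane-wave style family $u_k(\xi) = e^{\i k\cdot\xi}\eta(\xi)$ with a cutoff $\eta$, or simply note that the bilinear form $\langle H_i u, v\rangle$ is determined by its values on the canonical basis $\{\delta_\xi\}$ and read off the coefficients of $|D_j u(\xi)|^2$ by polarisation and comparing coefficients of the monomials $u(\xi)u(\xi')$.

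The main obstacle is the bookkeeping: a displacement localised near a site activates up to three independent nearest-neighbour bond-differences at that site (directions $a_1, a_2, a_3$), and these are not independent of the bonds at neighbouring sites, so one cannot literally switch on one $|D_j u(\xi)|^2$ at a time. The resolution is to observe that expanding $\sum_{\xi}\sum_{j=1}^3 g_j(\xi)|D_j u(\xi)|^2$ in the nodal values $\{u(\zeta)\}_{\zeta \in \L}$ produces a quadratic form whose off-diagonal coefficient of $u(\xi)u(\xi+a_j)$ is exactly $-2g_j(\xi)$ (for $j \in \{1,2,3\}$, using $a_{j+3} = -a_j$ to avoid double counting), and whose diagonal coefficient of $u(\xi)^2$ is $\sum_j [g_j(\xi) + g_j(\xi - a_j)]$. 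Since this quadratic form vanishes identically on $\Usz$ (which contains all $\delta_\zeta$ and their finite linear combinations), every coefficient vanishes; in particular the off-diagonal relations give $g_j(\xi) = 0$ for all $\xi \in \L$ and $j = 1,2,3$, which is precisely $h_{1,j}(\xi) = h_{2,j}(\xi)$. This comparison-of-coefficients argument is short and sidesteps the need for any clever explicit test function.
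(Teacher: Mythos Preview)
Your final ``comparison-of-coefficients'' argument is correct and is essentially the same as the paper's proof: the paper tests the bilinear form $\langle (H_1-H_2)u,v\rangle$ with $u=\delta_\eta$ and $v=\delta_{\eta+a_j}$, which is exactly how one extracts the off-diagonal coefficient $-2g_j(\eta)$ of $u(\eta)u(\eta+a_j)$ via polarisation. The earlier exploratory approaches you sketch (extremal site, Fourier cutoff) are unnecessary once you observe this.
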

\begin{proof}
  For some $\eta \in \L$ and $j \in \{1,2,3\}$, we define $u(\xi)
  = \delta_{\xi,\eta}$ and $v(\xi) = \delta_{\xi,\eta+a_j}$, where $\delta$ is the Kronecker delta. Then the
  product $D_k u(\xi) D_k v(\xi)$ is non-zero if and only if $\xi =
  \eta$ and $k = j$. Hence,
  \begin{displaymath}
    0  = \< (H_1 - H_2) u, v \>  = - (h_{1,j}(\eta) - h_{2,j}(\eta)).
  \end{displaymath}
  Hence we conclude that $h_{1,j}(\eta) = h_{2,j}(\eta)$ for all $\eta
  \in \L$ and $j = 1, 2, 3$. The converse implication is trivial.
\end{proof}

In the ``canonical'' hessian representations of $\Ea, \Ec, \E^\qnl$
products of finite differences $D_iu(\xi)D_ju(\xi)$ occur; see
\eqref{eq:2d:HaF_defn} and \eqref{eq:2d:qnl_hessian_defn}. In 1D, we
converted these products into squares of strains and strain gradients.
The next lemma provides an analogous representation for general mixed
differences. In appendix \ref{sec:app_general_hessrep} we state the
generalisation for general finite range interaction.

\begin{figure}
  \begin{center}
    \includegraphics[height=2.7cm]{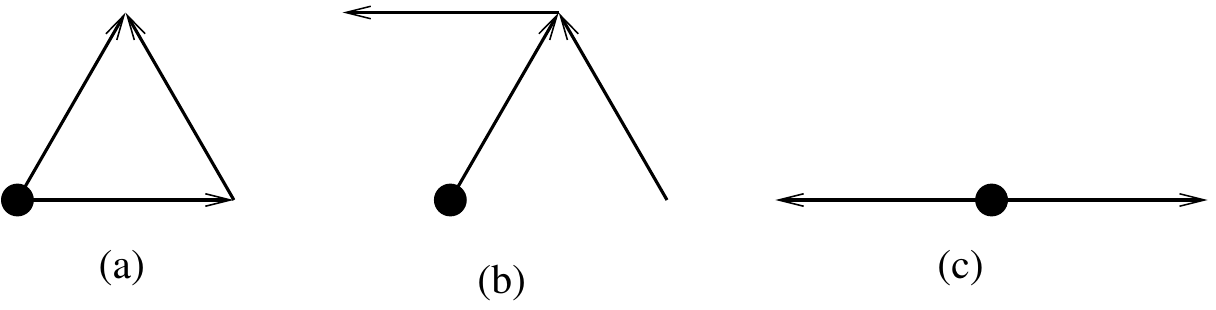}
    \caption{\label{fig:rules} Visualisation of the identities
      \eqref{eq:RULE-1}--\eqref{eq:RULE-3}. The bullets denote the
      sites $\xi$, while the arrows denote the terms $|D_j u(\eta)|^2$
      occuring in these identities. (a) visualises \eqref{eq:RULE-1};
      (b) visualises \eqref{eq:RULE-2}; (c) visualises
      \eqref{eq:RULE-3}.}
  \end{center}
\end{figure}
\begin{lemma}
	\label{th:elemprod0_2D_lemma}
  Let $u \in \Usz$, $\xi \in \L$ and $i \in \{1,\dots, 6\}$, then
  \begin{align}
    \label{eq:RULE-1}
    D_i u(\xi) D_{i+1} u(\xi) &= \smfrac12 |D_i u(\xi)|^2 +
    \smfrac12|D_{i+1} u(\xi)|^2 - \smfrac12 |D_{i+2} u(\xi+a_i)|^2, \\
    \label{eq:RULE-2}
    D_{i}u(\xi)D_{i+2}u(\xi) &= \smfrac12 |D_{i+1} u(\xi)|^2 -
    \smfrac12 |D_{i+2} u(\xi+a_i)|^2 \\
    \notag
    & \qquad - \smfrac12 |D_{i+3}
    u(\xi+a_{i+1})|^2 + \smfrac12 |D_i D_{i+2} u(\xi)|^2, \\
    \label{eq:RULE-3}
    D_i u(\xi) D_{i+3} u(\xi) &= - \smfrac12 |D_i u(\xi)|^2 -
    \smfrac12 |D_{i+3} u(\xi)|^2
    + \smfrac12 |D_{i+3}D_i u(\xi)|^2.
  \end{align}
\end{lemma}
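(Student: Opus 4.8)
The plan is to reduce all three identities to two elementary ingredients: the polarisation identity $ab = \smfrac12 a^2 + \smfrac12 b^2 - \smfrac12 (a-b)^2$ (equivalently $ab = \smfrac12 (a+b)^2 - \smfrac12 a^2 - \smfrac12 b^2$), together with the geometry of the triangular lattice, which I would first record as the vector identities
\begin{displaymath}
  a_{j+3} = -a_j \qquad\text{and}\qquad a_j + a_{j+2} = a_{j+1} \qquad \text{for all } j\in\Z,
\end{displaymath}
both of which follow at once from $a_j = \mQ_6^{j-1} a_1$: the first is already noted in the excerpt, and the second is the ``hexagon relation'', which by rotational symmetry ($a_j = \mQ_6 a_{j-1}$) need only be checked for one index. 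From these I would derive the finite-difference consequences obtained by telescoping $u$-values along short lattice paths, in particular
\begin{displaymath}
  D_i u(\xi) - D_{i+1} u(\xi) = u(\xi+a_i) - u(\xi+a_{i+1}) = -D_{i+2}u(\xi+a_i)
\end{displaymath}
(using $a_i + a_{i+2} = a_{i+1}$) and
\begin{displaymath}
  D_i u(\xi) + D_{i+3}u(\xi) = u(\xi+a_i) + u(\xi-a_i) - 2u(\xi) = -D_{i+3}D_i u(\xi).
\end{displaymath}
This is the 2D counterpart of the telescoping step in Lemma \ref{th:prod_D_lemma}.

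Given these, \eqref{eq:RULE-1} is immediate: apply the first form of polarisation to $D_i u(\xi) D_{i+1} u(\xi)$ and substitute $D_i u(\xi) - D_{i+1} u(\xi) = -D_{i+2}u(\xi+a_i)$. Likewise \eqref{eq:RULE-3} follows by applying the $(a+b)^2$ form of polarisation to $D_i u(\xi) D_{i+3} u(\xi)$ and inserting $D_i u(\xi) + D_{i+3}u(\xi) = -D_{i+3}D_i u(\xi)$.

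For \eqref{eq:RULE-2}, which is the one genuinely requiring care, I would first split off the second difference. Using $a_i + a_{i+2} = a_{i+1}$ one has $D_{i+1}u(\xi) = D_i u(\xi) + D_{i+2}u(\xi+a_i)$, so polarisation gives
\begin{displaymath}
  D_i u(\xi)\, D_{i+2}u(\xi+a_i) = \smfrac12 |D_{i+1}u(\xi)|^2 - \smfrac12 |D_i u(\xi)|^2 - \smfrac12 |D_{i+2}u(\xi+a_i)|^2 .
\end{displaymath}
Since $D_{i+2}u(\xi+a_i) - D_{i+2}u(\xi) = D_i D_{i+2}u(\xi)$, subtracting $D_i u(\xi)\, D_i D_{i+2}u(\xi)$ from both sides turns the left-hand side into $D_i u(\xi) D_{i+2}u(\xi)$. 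It then remains to complete the square once more,
\begin{displaymath}
  -\smfrac12 |D_i u(\xi)|^2 - D_i u(\xi)\, D_i D_{i+2}u(\xi) = -\smfrac12 |D_i u(\xi) + D_i D_{i+2}u(\xi)|^2 + \smfrac12 |D_i D_{i+2}u(\xi)|^2 ,
\end{displaymath}
and to identify the telescoped vector $D_i u(\xi) + D_i D_{i+2}u(\xi) = u(\xi+a_{i+1}) - u(\xi+a_{i+2}) = -D_{i+3}u(\xi+a_{i+1})$, using $a_i + a_{i+2} = a_{i+1}$ and $a_{i+1} + a_{i+3} = a_{i+2}$. Collecting terms produces exactly \eqref{eq:RULE-2}.

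The only real obstacle is bookkeeping: there is no hard step, but the identities are sensitive to the precise base points ($\xi$ versus $\xi+a_i$ versus $\xi+a_{i+1}$) and to index arithmetic modulo $6$, so the safest route is to expand every finite difference back to $u$-values, establish the telescoped vector identities once and for all, and then substitute. (Alternatively, since each identity is a fixed quadratic relation supported on finitely many sites, one could verify it by testing against $u = \delta_{\cdot,\eta}$ over the relevant $\eta$, in the spirit of Lemma \ref{th:equiv_hessians}; but the direct telescoping argument is cleaner and also explains the geometric origin of each term.) Figure \ref{fig:rules} serves as the bookkeeping aid throughout.
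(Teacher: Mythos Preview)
Your proposal is correct and is exactly the kind of direct calculation the paper has in mind: the paper's own proof consists of the single sentence ``All three identities are straightforward to verify by direct calculations,'' and you have simply spelled this out in full via polarisation and the lattice relations $a_{j+3}=-a_j$, $a_j+a_{j+2}=a_{j+1}$.
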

\begin{proof}
  All three identities are straightforward to verify by direct
  calculations. 
\end{proof}

\begin{proposition}[Cauchy--Born Hessian]
  \label{th:2d:cb_hess}
  There exist $c_j = c_j(\mF)$, $j = 1, 2, 3$, such that
  \begin{displaymath}
    \< \Hc_\mF u , u \> = \sum_{j = 1}^3 c_j \sum_{\xi \in \L} |D_j u(\xi)|^2,
  \end{displaymath}
  where $\ddW(\mF) = \smfrac12 \sum_{j = 1}^3 c_j a_j \otimes a_j$.

  In the hexagonally symmetric case \eqref{eq:2d:hex_symm}, we have
  $c_1 = c_2 = c_3 =: c$.
\end{proposition}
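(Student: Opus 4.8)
The plan is to reduce the statement to linear algebra on the triangular lattice. The key observation is that the three rank-one symmetric matrices $a_1\otimes a_1$, $a_2\otimes a_2$, $a_3\otimes a_3$ are linearly independent: expressing them in coordinates with respect to a basis of the three-dimensional space of symmetric $2\times2$ matrices and using $a_1=(1,0)$, $a_2=(\cos\tfrac{\pi}{3},\sin\tfrac{\pi}{3})$, $a_3=(-\cos\tfrac{\pi}{3},\sin\tfrac{\pi}{3})$, one computes a nonzero determinant. Hence they form a basis of that space, so for each $\mF$ there is a \emph{unique} triple $(c_1,c_2,c_3)=(c_1(\mF),c_2(\mF),c_3(\mF))$ with $\ddW(\mF)=\smfrac12\sum_{j=1}^3 c_j\,a_j\otimes a_j$. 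Taking this relation as the definition of the $c_j$, it remains only to verify the identity $\<\Hc_\mF u,u\>=\sum_{j=1}^3 c_j\sum_{\xi\in\L}|D_j u(\xi)|^2$.

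To do this I would start from $\<\Hc_\mF u,u\>=\smfrac12\sum_{T\in\T}(\D u_T)^\top\ddW(\mF)\,\D u_T$, insert the decomposition of $\ddW(\mF)$, and use that for each closed triangle $T\in\T$ and each $j\in\{1,2,3\}$ there is exactly one edge of $T$ parallel to $a_j$; if $\eta,\eta+a_j$ are its endpoints then $\D u_T\cdot a_j=D_j u(\eta)$ since $u$ is affine on $T$, so $(\D u_T\cdot a_j)^2=|D_j u(\eta)|^2$. (Unlike the atomistic Hessian, no mixed products $D_iu\,D_ju$ survive once the $a_j\otimes a_j$ basis is used, so Lemma~\ref{th:elemprod0_2D_lemma} is not needed here.) This rewrites $\<\Hc_\mF u,u\>$ as a sum over (triangle, direction) pairs of terms proportional to $c_j|D_ju(\cdot)|^2$; since every nearest-neighbour bond $\{\xi,\xi+a_j\}$ is an edge of exactly two triangles of $\T$ (one ``upward'', one ``downward''), reordering this sum over bonds collapses it to $\sum_{j=1}^3 c_j\sum_{\xi\in\L}|D_ju(\xi)|^2$, the combinatorial multiplicity ``two triangles per bond'' accounting for the constant. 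Alternatively one may first bring both sides into the canonical form $\sum_\xi\sum_{j=1}^3 h_j(\xi)|D_ju(\xi)|^2$ and invoke Lemma~\ref{th:equiv_hessians} to match coefficients.

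For the hexagonally symmetric case I would use that full hexagonal symmetry of $V$ together with $a_{j+1}=\mQ_6 a_j$ yields $W(\mG\,\mQ_6)=W(\mG)$ for all $\mG$; taking the Hessian of both sides at the fixed point $\mG=\mO$ gives $\ddW(\mO)=\mQ_6^\top\ddW(\mO)\,\mQ_6$. Applying this to the decomposition and using $\mQ_6^\top(a_j\otimes a_j)\mQ_6=a_{j-1}\otimes a_{j-1}$ (with $a_0\otimes a_0=a_3\otimes a_3$) gives $\ddW(\mO)=\smfrac12\sum_{j=1}^3 c_{j+1}\,a_j\otimes a_j$, and uniqueness of the coefficients then forces $c_1=c_2=c_3$.

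I expect the only real obstacle to be bookkeeping: correctly identifying, for both upward- and downward-pointing triangles, which difference $D_ju(\eta)$ is carried by the edge of $T$ parallel to $a_j$, and confirming the per-bond multiplicity is exactly two so that the constants are consistent. Everything else is elementary linear algebra and symmetry, and there are no analytic subtleties because all sums are finite on $\Usz$.
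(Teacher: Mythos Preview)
Your approach is correct and takes a genuinely different route from the paper's. The paper (Appendix~\ref{sec:proof_2d:cb_hess}) works from the expansion $\D u_T^\top \ddW \,\D u_T = \sum_{i,j} V_{i,j}\,D_i u(\xi_i)\,D_j u(\xi_j)$ on each triangle, converts the mixed products to sums of squares via rule~\eqref{eq:RULE-1}, obtains a priori site-dependent coefficients $c_j(\xi)$, and then invokes translation invariance together with Lemma~\ref{th:equiv_hessians} to show they are constant; the formula for $\ddW$ is deduced only afterward. You instead observe that $\{a_j\otimes a_j\}_{j=1}^3$ spans the symmetric $2\times2$ matrices, \emph{define} the $c_j$ by decomposing $\ddW(\mF)$ in this basis, and reduce the Hessian identity to the combinatorial fact that each bond lies in exactly two triangles. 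This is more elementary---neither Lemma~\ref{th:elemprod0_2D_lemma} nor Lemma~\ref{th:equiv_hessians} is needed---and makes the $\ddW$ relation a definition rather than a consequence; the paper's route, by contrast, parallels the treatment of the atomistic and QNL Hessians (Propositions~\ref{th:2d:atm_hessrep} and~\ref{th:2d:qnl_hess_straingrad}) and so integrates more uniformly into the surrounding narrative. Your handling of the hexagonally symmetric case, arguing directly from $\mQ_6$-invariance of $\ddW(\mO)$ and uniqueness of the basis coefficients, is likewise cleaner than the paper's appeal to Lemma~\ref{th:equiv_hessians}.

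One caution on the bookkeeping you flag: if you carry out the two-triangles-per-bond count against the $\tfrac12$ volume weight in $\Ec$ and the $\tfrac12$ in the stated relation $\ddW=\tfrac12\sum_j c_j\,a_j\otimes a_j$, you will find an extra factor of two. A direct check in the hexagonally symmetric case (compute $\ddW(\mO)=3(\alpha_0+\alpha_1)I$ and compare with $c=2(\alpha_0+\alpha_1)$ from~\eqref{eq:Ha_coeffs_hexsym} and $\sum_{j=1}^3 a_j\otimes a_j=\tfrac32 I$) shows the $\tfrac12$ in the proposition's $\ddW$ formula appears to be a typo; the Hessian identity itself is correct.
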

\begin{proof}
  The result can be checked by a straightforward calculation. The
  complete proof is given in Appendix~\ref{sec:proof_2d:cb_hess}.
\end{proof}


Next, we establish the ``strain-gradient'' representation of the
atomistic hessian. We define a {\em sum of squares} $p : \R^K \to \R$
to be a diagonal homogeneous quadratic, i.e., a function of the form
$p(z) = \sum_{k = 1}^K c_k z_k^2$.

\begin{proposition}
  \label{th:2d:atm_hessrep}
  There exists a sum of squares $X = X_\mF : \R^{36} \to \R$, such
  that
  \begin{displaymath}
    \< \Ha_\mF u, u \> = \< \Hc_\mF u, u \> + \sum_{\xi \in \L} X(D^2 u),
  \end{displaymath}
  where $D^2 u(\xi) = (D_i D_j u(\xi))_{i,j = 1}^6$.
\end{proposition}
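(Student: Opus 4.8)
The plan is to rewrite the canonical representation \eqref{eq:2d:HaF_defn} of $\Ha_\mF$ term by term using Lemma~\ref{th:elemprod0_2D_lemma}. Using the point symmetry $V_{ij}=V_{i'j'}$ I would organise the double sum $\sum_{i,j=1}^6 V_{ij}\,D_i u(\xi)D_j u(\xi)$ according to the angular separation of the two directions: $j=i$ (diagonal), $j=i\pm1$, $j=i\pm2$, and $j=i+3$ (opposite directions). The diagonal products $D_i u(\xi)^2$ are already first-difference squares; to the $j=i\pm1$ products I apply \eqref{eq:RULE-1}, to the $j=i\pm2$ products \eqref{eq:RULE-2}, and to the $j=i+3$ products \eqref{eq:RULE-3}. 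By Lemma~\ref{th:elemprod0_2D_lemma}, \eqref{eq:RULE-1} produces only first-difference squares, whereas \eqref{eq:RULE-2} and \eqref{eq:RULE-3} each also produce exactly one second-difference square, $|D_i D_{i+2} u(\xi)|^2$ respectively $|D_{i+3} D_i u(\xi)|^2$.

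After these substitutions I would carry out the lattice sum $\sum_{\xi\in\L}$, using its translation invariance to shift every displaced argument (such as the $|D_{i+2}u(\xi+a_i)|^2$ appearing in \eqref{eq:RULE-1}--\eqref{eq:RULE-2}) back to the base point $\xi$, and the relation $D_{k+3}u(\xi)=-D_k u(\xi+a_{k+3})$ to reduce all first-difference squares to the three directions $k=1,2,3$. Collecting terms then yields
\begin{displaymath}
  \< \Ha_\mF u, u \> = \sum_{j=1}^3 b_j \sum_{\xi\in\L}|D_j u(\xi)|^2 + \sum_{\xi\in\L} X_0(D^2 u(\xi)),
\end{displaymath}
where $b_j=b_j(\mF)\in\R$ are explicit linear combinations of the $V_{ij}$, and $X_0=X_{0,\mF}:\R^{36}\to\R$ is a sum of squares in the second differences, with nonzero coefficients only on the index pairs $(i,j)$ with $j-i\equiv\pm2,3\pmod 6$ (built out of $V_{i,i\pm 2}$ and $V_{i,i+3}$).

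It remains to identify the first-difference part of this representation with $\< \Hc_\mF u, u\>$. By Proposition~\ref{th:2d:cb_hess}, $\< \Hc_\mF u, u\> = \sum_{j=1}^3 c_j\sum_{\xi\in\L}|D_j u(\xi)|^2$, so by Lemma~\ref{th:equiv_hessians} it is enough to show $b_j=c_j$ for $j=1,2,3$; then the claim follows with $X:=X_0$. To see this I would apply the displayed identity (and the analogous representation of $\Hc_\mF$) to a sequence of cut-off affine displacements $u_R\in\Usz$ that agree with $x\mapsto\mG\cdot x$ on the ball $B_R$. In the interior all second differences vanish, so the interior (per-site) density of $\< \Ha_\mF u_R, u_R\>$ equals $\sum_j b_j(\mG\cdot a_j)^2$; on the other hand, reading the interior density off \eqref{eq:2d:HaF_defn} directly and using $W(\mF)=V(\mF\Rg)$ (so that $\ddW(\mF)$ is precisely the contraction of $(V_{ij})$ against the $a_i$), this same density equals $\mG^\t\ddW(\mF)\mG$; the same two computations for $\Hc_\mF$ give $\sum_j c_j(\mG\cdot a_j)^2$ and, again, $\mG^\t\ddW(\mF)\mG$. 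Hence $\sum_j b_j(\mG\cdot a_j)^2=\sum_j c_j(\mG\cdot a_j)^2$ for all $\mG\in\R^2$, and since the three quadratic forms $\mG\mapsto(\mG\cdot a_j)^2$, $j=1,2,3$, are linearly independent (equivalently, $a_1\otimes a_1,a_2\otimes a_2,a_3\otimes a_3$ are linearly independent in the space of symmetric $2\times2$ matrices), $b_j=c_j$.

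I expect the only real difficulty to be the bookkeeping in the second step: after the substitutions \eqref{eq:RULE-1}--\eqref{eq:RULE-3} and the re-indexing of the lattice sum one must check carefully that the surviving non-diagonal squares assemble into a function of $D^2 u(\xi)$ alone — i.e., that no stray first-difference squares are left over — and track the shifts and signs so as to read off both the coefficients $b_j$ and those of $X_0$. Conceptually there is nothing beyond this. The hexagonally symmetric case \eqref{eq:2d:hex_symm}, in which $b_1=b_2=b_3$ and $X_0$ inherits the hexagonal symmetry, is substantially shorter and could be carried out first, and in the general point-symmetric case the computation is conveniently routed through the general finite-range statement of Appendix~\ref{sec:app_general_hessrep}.
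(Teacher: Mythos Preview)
Your proposal is correct and follows essentially the same route as the paper: apply the identities \eqref{eq:RULE-1}--\eqref{eq:RULE-3} together with translation invariance of the lattice sum to obtain a representation $\<\Ha_\mF u,u\>=\sum_j b_j\sum_\xi|D_ju(\xi)|^2+\sum_\xi X(D^2u(\xi))$, and then identify $b_j=c_j$ via the linear independence of $a_j\otimes a_j$. The only difference is in the execution of the identification step: the paper uses a scaling argument with $u^{(\eps)}(\xi)=\eps u(\eps\xi)$ for smooth compactly supported $u$, so that both $\Ha_\mF$ and $\Hc_\mF$ converge to the same continuum limit $\int \D u^\top\mC\,\D u\,\dx$, whereas your cut-off--affine argument extracts the same information through the interior per-site energy density --- these are two equivalent ways of isolating the action of the operators on constant-strain fields.
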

\begin{proof}
  Applying the identities \eqref{eq:RULE-1}--\eqref{eq:RULE-3} to the
  original form \eqref{eq:2d:HaF_defn} of $\Ha_\mF$, and noting the
  translation invariance of these operations, we obtain
  \begin{displaymath}
    \< \Ha_\mF u, u \> = \sum_{j = 1}^3 c_j^\a \sum_{\xi \in \L} |D_j
    u(\xi)|^2 + \sum_{\xi \in \L} X(D^2 u(\xi)),
  \end{displaymath}
  where $X(D^2 u) = \sum_{i,j} b_{ij} |D_iD_j u|^2$ for some
  coefficients $b_{i,j} \in \R$. It only remains to show that $c_j^\a
  = c_j$ for $j = 1, 2, 3$.

  To prove this, we use a scaling argument. Let $u \in
  C^\infty_0(\R^2)$, and let $u^{(\eps)}(\xi) := \eps u(\eps \xi)$,
  then it is elementary to show that
  \begin{align*}
    \smfrac{2}{\sqrt{3}} \b\< \Hc_\mF u^{(\eps)}, u^{(\eps)} \b\> &\to \int_{\R^2} \sum_{j = 1}^3 c_j
    |\D u \cdot a_j|^2 \dx = \int_{\R^2} \D u^T \mC \D u \dx, \quad \text{and} \\
    \smfrac{2}{\sqrt{3}} \b\< \Ha_\mF u^{(\eps)}, u^{(\eps)} \b\> &\to \int_{\R^2} \sum_{j = 1}^3 c_j^\a
    |\D u \cdot a_j|^2 \dx = \int_{\R^2} \D u^T \mC^\a \D u \dx,
  \end{align*}
  where $\mC = \sum_{j = 1}^3 c_j a_j \otimes a_j$ and $\mC^\a =
  \sum_{j = 1}^3 c_j^\a a_j \otimes a_j$. (The factor $2/\sqrt{3}$
  accounts for the density of lattice sites.)  Moreover, since
  $\Hc_\mF$ is the hessian of the Cauchy--Born continuum model,
  restricted to a P1 finite element space, we know that the two limits
  must be identical,
  $\int \D u^T \mC \D u \dx = \int \D u^T \mC^\a \D u \dx$,
  which is only possible if $\mC = \mC^\a$. Since the three rank-1
  matrices $a_j \otimes a_j, j = 1, 2, 3,$ are linearly independent,
  we can conclude that $c_j = c_j^\a$ for $j = 1, 2, 3$. 
  %
\end{proof}

\subsection{Simple cases}
\label{sec:2d:simple_cases}
  %
  1. Suppose that the potential $V$ is such that $V_{i,i+2} =
  V_{i,i+3} \equiv 0$ for all $i = 1, \dots, 6$; that is, only the
  ``neighbouring bonds'' interact. (In the hexagonally symmetry case,
  this amounts to assuming that $\alpha_2 = \alpha_3 = 0$.) This
  could, for example, be understood as a simple case of bond-angle
  interaction. Then, in the proof of Proposition
  \ref{th:2d:atm_hessrep}, only the identity \eqref{eq:RULE-1} is
  employed but neither \eqref{eq:RULE-2} nor
  \eqref{eq:RULE-3}. Therefore, $X \equiv 0$, and we obtain that
  $\Ha_\mF = \Hc_\mF$. 

  2. In the hexagonally symmetric case \eqref{eq:2d:hex_symm}, without
  assuming $\alpha_2 = \alpha_3 = 0$, a straightforward explicit
  computation yields 
  \begin{align}
    \label{eq:Ha_coeffs_hexsym}
    c &= 2 (\alpha_0 +\alpha_1 - \alpha_2 - \alpha_3), \qquad
    \text{and} \\
    \notag
    X(D^2 u) &= \sum_{i = 1}^6 \b(\alpha_2
    |D_{i+2}D_i u|^2 + \alpha_3 |D_{i+3}D_i u|^2 \b). \qedhere
  \end{align}


\section{Instability and Stabilization in 2D}
\label{sec:2d:qnl_hessian_rep}
In this section we will derive the ``strain gradient'' representation
of the QNL hessian.  We shall find that, in contrast to our
one-dimensional result (Lemma \ref{th:qnl:sgrad_c0_eq_ddW}), in 2D
there is a source of instability that is different from an error in
the strain gradient coefficients, and therefore more severe.

\subsection{QNL hessian representation}
\label{sec:2d:qnlhess_rep}
%





Applying the rules \eqref{eq:RULE-1}--\eqref{eq:RULE-3} to the
``canonical'' QNL hessian representation
\eqref{eq:2d:qnl_hessian_defn} we obtain the following result.

\begin{figure}
  \begin{center}
    \includegraphics[height=5cm]{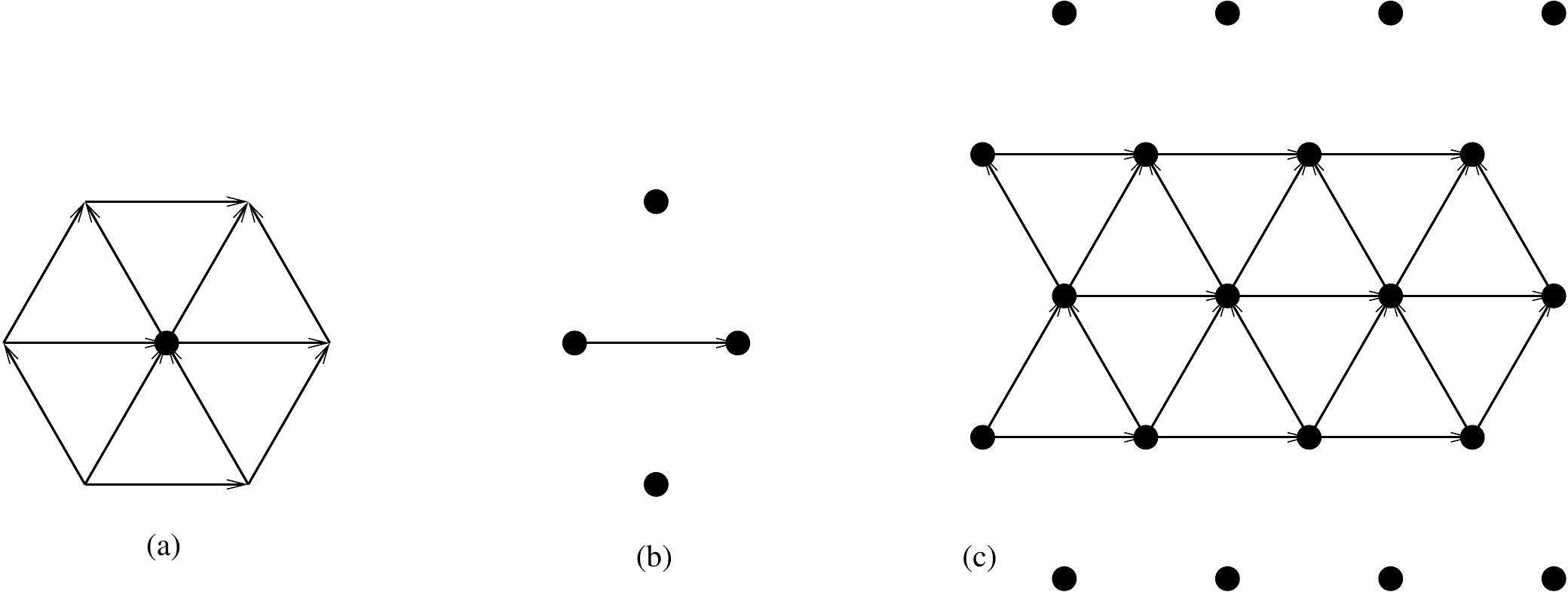}
    \caption{\label{fig:influence} (a) Bonds (arrows) that are
      affected by the operations \eqref{eq:RULE-1}--\eqref{eq:RULE-3},
      from a single size $\xi$ (black disk); (b) Sites (black disks)
      that affect a given bond (arrow) through the operations
      \eqref{eq:RULE-1}--\eqref{eq:RULE-3}. (c) Bonds for which the
      coefficients $\tilde{c}_j(\xi)$ of the a/c hessian differ from
      the coefficients $c_j$ of the Cauchy--Born hessian;
      cf. Proposition \ref{th:2d:qnl_hess_straingrad}.}
  \end{center}
\end{figure}

\begin{proposition}
  \label{th:2d:qnl_hess_straingrad}
  There exist coefficients $\tilde{c}_j(\xi) = \tilde{c}_j(\mF, \xi)$,
  and sums of squares $\tilde{X}_\xi : \R^{36} \to \R$ such that
  \begin{equation}
    \label{eq:2d:qnl_hessian_straingrad}
    \< \Hac_\mF u, u \> = \sum_{j = 1}^3 \sum_{\xi \in \L}
    \tilde{c}_j(\xi) |D_j u(\xi)|^2 + \sum_{\xi \in \L} \tilde{X}_\xi(D^2u(\xi)).
  \end{equation}
  Moreover, the following identities hold: 
  \begin{align}
    \label{eq:2d:qnlhess_strgrad_cj}
    \tilde{c}_j(\xi) = c_j \quad &\text{except if both } 
    \xi, \xi+a_j \in \L^{(-1)} \cup \L^{(0)} \cup \L^{(1)},  \\
    \label{eq:2d:qnlhess_strgrad_Xc}
     \tilde{X}_\xi = 0 \quad &\text{for } \xi_2 > 0, \quad \text{and} \\
    \label{eq:2d:qnlhess_strgrad_Xa}
     \tilde{X}_\xi = X \quad &\text{for } \xi_2 < 0.
  \end{align}
\end{proposition}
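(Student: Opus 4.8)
The plan is to mimic the proof of the 1D hessian representation (Lemma \ref{th:qnl:sgrad_c0_eq_ddW}) as closely as possible, but using the 2D reduction rules \eqref{eq:RULE-1}--\eqref{eq:RULE-3} in place of Lemma \ref{th:prod_D_lemma}, and Lemma \ref{th:equiv_hessians} in place of the elementary ``$\sum c_0(\xi) D_1 v(\xi) = 0 \Rightarrow c_0$ constant'' argument. First I would start from the ``canonical'' QNL hessian \eqref{eq:2d:qnl_hessian_defn}, split it into the interface/atomistic sum $\sum_{\xi \in \La \cup \L^{(0)}} \sum_{i,j} \tilde V_{\xi,ij} D_i u(\xi) D_j u(\xi)$ and the continuum sum $\sum_T w_T (\D u_T)^\top \ddW(\mF) \D u_T$. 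For the continuum part, the reduction to $\sum_{j=1}^3 \sum_\xi (\text{something}) |D_j u(\xi)|^2$ is exactly what Proposition \ref{th:2d:cb_hess} does for the full Cauchy--Born hessian; the only subtlety is that near $\L^{(0)}$ the weights $w_T$ differ from the bulk value, so the coefficient picked up on bonds incident to $\L^{(0)}$ and $\L^{(1)}$ will differ from the bulk $c_j$. For the atomistic/interface part, applying \eqref{eq:RULE-1}--\eqref{eq:RULE-3} termwise to each $D_i u(\xi) D_j u(\xi)$ produces, by inspection of the right-hand sides, only squares of single differences $|D_k u(\eta)|^2$ and squares of mixed second differences $|D_k D_l u(\eta)|^2$; collecting the former into the $\tilde c_j(\xi) |D_j u(\xi)|^2$ terms and the latter into $\tilde X_\xi(D^2 u(\xi))$ (after re-indexing the base point $\eta$, using translation invariance of the rules) gives the representation \eqref{eq:2d:qnl_hessian_straingrad}, with $\tilde c_j(\xi)$ still to be pinned down.

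Next I would establish the three locality claims. For \eqref{eq:2d:qnlhess_strgrad_Xc}: since $\tilde V_\xi = V$ is used only for $\xi \in \La \cup \L^{(0)}$, i.e. $\xi_2 \le 0$, and the continuum sum contributes no second-difference terms at all (it only produces $|D_j u|^2$ terms), any $|D_k D_l u(\eta)|^2$ arising from a rule applied at a base site $\xi$ with $\xi_2 \le 0$ is supported near $\xi$; tracking the support of the arrows in Figure \ref{fig:influence}(a)--(b) shows these second-difference terms all sit on sites $\eta$ with $\eta_2 < 0$ (continuum region never reached), hence $\tilde X_\xi = 0$ for $\xi_2 > 0$. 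For \eqref{eq:2d:qnlhess_strgrad_Xa}: for $\xi_2 < 0$ we are sufficiently far from the interface that $\tilde V_{\xi,ij} = V_{ij}$ and all neighbours used in the rules also lie strictly in $\La$, so the bookkeeping is literally identical to that in the proof of Proposition \ref{th:2d:atm_hessrep}; hence $\tilde X_\xi = X$, the same sum of squares. For \eqref{eq:2d:qnlhess_strgrad_cj}: the coefficient $\tilde c_j(\xi)$ of $|D_j u(\xi)|^2$ collects contributions both from rules applied at $\xi$ itself and from rules applied at neighbouring base sites $\eta$ (the ``influence set'' of a bond, Figure \ref{fig:influence}(b)); away from a fixed-width strip around $\L^{(0)}$ every such contribution is the bulk one, so $\tilde c_j(\xi)$ agrees with the bulk coefficient, which I would then identify with $c_j$.

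The key remaining point — and I expect this to be the main obstacle, exactly as in 1D — is to prove that the bulk value of $\tilde c_j(\xi)$ equals precisely $c_j$ (the Cauchy--Born coefficient from Proposition \ref{th:2d:cb_hess}), rather than some a priori different constant. The clean way is the force-consistency hypothesis \eqref{eq:2d:qnl_nogf}: differentiating $\langle \del \Eqnl((\mF + t\mG)x), v\rangle = 0$ in $t$ at $t=0$ gives $\langle \Hac_\mF \mG x, v\rangle = 0$ for all affine $\mG x$ and all $v \in \Usz$. Plugging the representation \eqref{eq:2d:qnl_hessian_straingrad} into this and using that all second differences $D_k D_l(\mG x) = 0$ annihilate the $\tilde X_\xi$ part, one is left with $\sum_{j=1}^3 \sum_\xi \tilde c_j(\xi) (\mG a_j) D_j v(\xi) = 0$ for every affine $\mG$ and every $v$; I would extend $\tilde c_j(\xi)$ to $\xi \in \Lc$ by the bulk Cauchy--Born value and use the discrete divergence structure (as in the 1D argument, summation by parts on each of the three lattice directions, then varying $\mG$ over two independent directions) to conclude that $\xi \mapsto \tilde c_j(\xi)$ is constant on each family of $j$-bonds, hence equals its bulk value; and that bulk value must be $c_j$ because far from the interface $\Hac_\mF$ restricted to bulk lattice functions coincides with $\Hc_\mF$ in the form of Proposition \ref{th:2d:cb_hess}. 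Combined with \eqref{eq:2d:qnlhess_strgrad_Xa}, this yields $\tilde c_j(\xi) = c_j$ except on bonds touching $\L^{(0)}$; a short case check of exactly which bonds near $\L^{(0)}$ receive a non-bulk contribution (from the influence picture, these are bonds with both endpoints in $\L^{(-1)} \cup \L^{(0)} \cup \L^{(1)}$) gives the precise exception set stated in \eqref{eq:2d:qnlhess_strgrad_cj}. The routine-but-tedious parts — the explicit re-indexing in the rules and the enumeration of affected bonds — I would relegate to the figures and, if needed, an appendix.
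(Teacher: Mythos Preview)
Your overall structure---apply the rules \eqref{eq:RULE-1}--\eqref{eq:RULE-3} to the canonical hessian \eqref{eq:2d:qnl_hessian_defn}, then read off the locality of $\tilde X_\xi$ and of the bond coefficients---matches the paper. But your treatment of \eqref{eq:2d:qnlhess_strgrad_cj} contains a genuine gap.

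You propose to use force consistency to conclude that ``$\xi \mapsto \tilde c_j(\xi)$ is constant on each family of $j$-bonds''. This does not follow in 2D. Summation by parts in $\sum_{j=1}^3 (\mG\cdot a_j) \sum_\xi \tilde c_j(\xi) D_j v(\xi)=0$ gives, at each site $\xi$, the single vector equation $\sum_{j=1}^3 (\mG\cdot a_j)\,[\tilde c_j(\xi)-\tilde c_j(\xi-a_j)]=0$. Varying $\mG$ over $\R^2$ yields only two scalar constraints on the three differences $\tilde c_j(\xi)-\tilde c_j(\xi-a_j)$, $j=1,2,3$; this is a divergence-free condition, not constancy. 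Indeed the proposition itself allows $\tilde c_j(\xi)\neq c_j$ on the interface strip, and the subsequent Lemma~\ref{th:2dqnl:consequence_of_forcecons} shows (under an extra translation-invariance hypothesis) that $\tilde c_1$ genuinely varies there---that is the source of the 2D instability. So your argument, if it worked, would prove something false.

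The paper does not use force consistency at all for this proposition. The bulk values of $\tilde c_j(\xi)$ are already known: Proposition~\ref{th:2d:cb_hess} gives them in the continuum region, and Proposition~\ref{th:2d:atm_hessrep} (via its scaling argument) gives the \emph{same} $c_j$ in the atomistic region. What remains is purely local: inspect which bonds $(\xi,\xi+a_j)$ receive any contribution from the modified site potential at $\L^{(0)}$ or from the non-bulk element weights $w_T$. The rules \eqref{eq:RULE-1}--\eqref{eq:RULE-3} applied at a site $\eta$ touch only the twelve bonds in Figure~\ref{fig:influence}(a), so a given bond is influenced only by the four vertices of its two adjacent triangles (Figure~\ref{fig:influence}(b)); hence only bonds with both endpoints in $\L^{(-1)}\cup\L^{(0)}\cup\L^{(1)}$ can differ from $c_j$. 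That is the whole argument. Your bulk-comparison remark at the end is in fact sufficient on its own; the force-consistency detour should be dropped here and saved for the sharper statement in Lemma~\ref{th:2dqnl:consequence_of_forcecons}.

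A minor slip: for \eqref{eq:2d:qnlhess_strgrad_Xc} you write that the second-difference terms sit at sites with $\eta_2<0$. The rules place $|D_iD_ju(\xi)|^2$ at the centre site $\xi$ itself, so a site $\xi\in\L^{(0)}$ contributes $\tilde X_\xi$ at $\xi_2=0$. This does not affect \eqref{eq:2d:qnlhess_strgrad_Xc}, which only concerns $\xi_2>0$, but the stated reason is off by one row.
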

\begin{proof}
  Applying the identities \eqref{eq:RULE-1}--\eqref{eq:RULE-3} to the
  hessian representation \eqref{eq:2d:qnl_hessian_defn} we obtain
  \eqref{eq:2d:qnl_hessian_straingrad}, and it only remains to prove
  \eqref{eq:2d:qnlhess_strgrad_cj}--\eqref{eq:2d:qnlhess_strgrad_Xa}.

  The identities \eqref{eq:2d:qnlhess_strgrad_Xc} and
  \eqref{eq:2d:qnlhess_strgrad_Xa} simply follow from the fact that
  the operations \eqref{eq:RULE-1}--\eqref{eq:RULE-3} only create
  strain gradient terms associated with the centre atom $\xi$. 

  The remaining property \eqref{eq:2d:qnlhess_strgrad_cj} can be
  obtained by understanding which bond coefficients $c_i(\eta)$ are
  ``influenced'' by the operations
  \eqref{eq:RULE-1}--\eqref{eq:RULE-3} applied with a given centre
  atom $\xi$. These are depicted in Figure \ref{fig:rules} and after
  combining the graphs for the three identities and rotating them, we
  see that a lattice site $\xi$ only influences the coefficients
  $c_i(\eta)$ corresponding to the twelve bonds $D_j u(\xi), j = 1,
  \dots, 6$ and $D_{j+2} u(\xi+a_j), j = 1, \dots, 6$; cf. Figure
  \ref{fig:influence} (a). From this, it follows that a given
  coefficient $c_i(\eta)$ is influenced only by the four nodes of the
  two neighbouring triangles; cf. Figure \ref{fig:influence}
  (b). Thus, only the bonds depicted in Figure \ref{fig:influence} (c)
  are affected by the modified site potentials, which are precisely
  those bonds contained in the strip $\{ x \in \R^2 \sep - \sqrt{3}/2
  \leq x_2 \leq \sqrt{3}/2 \}$.
\end{proof}

Although we have always restricted our presentation to the flat
interface situation, all results up to this point are {\em
  generic}. That is, they can be generalised to interfaces with
corners and even to long range interactions. 

In the next result, where we provide some characterisation of the
coefficients $\tilde{c}_j(\xi)$ in the interface region, we exploit
tangential translation invariance.

\begin{lemma}
  \label{th:2dqnl:consequence_of_forcecons}
  Suppose that the modified site-energies are tangentially translation
  invariant, i.e., $\tilde{V}_\xi = \tilde{V}_{\xi+a_1}$ for all $\xi
  \in \L^{(0)}$. Then the coefficients in the strain gradient
  representation \eqref{eq:2d:qnl_hessian_straingrad} satisfy
  \begin{equation}
    \label{eq:2dqnl:j2_j3_exact}
    \tilde{c}_j(\xi) = c_j \qquad \text{for all } \xi \in \L, \quad j = 2, 3.
  \end{equation}

  Moreover, for $j = 1$ and $\xi \in \L^{(m)}, m = -1, 0, 1,$ we have
  $\tilde{c}_1(\xi) = \tilde{c}_1^{(m)}$ (tangential translation
  invariance) and
  \begin{equation}
    \label{eq:2dqnl:sum_tilc1_eq_3c1}
    \sum_{m = -1}^1 \tilde{c}_1^{(m)} = 3 c_1.
  \end{equation}
\end{lemma}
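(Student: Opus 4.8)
The plan is to work throughout in the strain‑gradient representation \eqref{eq:2d:qnl_hessian_straingrad}, combining the a‑priori structure of Proposition~\ref{th:2d:qnl_hess_straingrad} with tangential translation invariance and the two consistency conditions \eqref{eq:2d:qnl_nogf}--\eqref{eq:2d:qnl_energy_cons}. First I would record that the coefficients are tangentially translation invariant: since $\tilde V_\xi=\tilde V_{\xi+a_1}$ on $\L^{(0)}$ while $\La$, $\Lc$ and the weights $w_T$ are invariant under $\xi\mapsto\xi+a_1$, the form $\<\Hac_\mF\cdot,\cdot\>$ is invariant under the shift $u\mapsto u(\cdot+a_1)$, and because the rules \eqref{eq:RULE-1}--\eqref{eq:RULE-3} used to pass from \eqref{eq:2d:qnl_hessian_defn} to \eqref{eq:2d:qnl_hessian_straingrad} are translation‑equivariant, the coefficients $\tilde c_j(\xi)$ and the $\tilde X_\xi$ inherit this invariance. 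Hence $\tilde c_j(\xi)=\tilde c_j^{(m)}$ for $\xi\in\L^{(m)}$, with $\tilde c_j^{(m)}=c_j$ outside the interface rows listed in Proposition~\ref{th:2d:qnl_hess_straingrad}; this already yields the claimed tangential invariance of $\tilde c_1$.

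Next, \eqref{eq:2dqnl:j2_j3_exact} comes from force consistency. Differentiating \eqref{eq:2d:qnl_nogf} in $\mF$ gives $\<\Hac_\mF(\mG x),v\>=0$ for all $v\in\Usz$, $\mG\in\R^2$; since $\mG x$ is affine, the strain‑gradient terms drop and this reads $\sum_{j=1}^3(\mG\cdot a_j)\,S_j(v)=0$ with $S_j(v):=\sum_\xi\tilde c_j(\xi)D_jv(\xi)$, i.e. $\sum_j a_jS_j(v)=0$ in $\R^2$; using $a_2=a_1+a_3$ and the independence of $a_1,a_3$, one gets $S_1+S_2=0=S_2+S_3$. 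But $S_1\equiv0$ identically, because $S_1(v)=\sum_m\tilde c_1^{(m)}\sum_{\xi\in\L^{(m)}}D_1v(\xi)$ and each inner sum telescopes to zero along its row; hence $S_2\equiv S_3\equiv0$ as well. Writing $\tilde c_2^{(m)}=c_2+e_2^{(m)}$ (nonzero only for $m\in\{-1,0\}$) and using $\sum_{\xi\in\L^{(m)}}D_2v(\xi)=\sigma_{m+1}(v)-\sigma_m(v)$ with $\sigma_m(v):=\sum_{\xi\in\L^{(m)}}v(\xi)$, the identity $S_2\equiv0$ becomes $e_2^{(-1)}(\sigma_0-\sigma_{-1})+e_2^{(0)}(\sigma_1-\sigma_0)\equiv0$, and since $\sigma_{-1},\sigma_0,\sigma_1$ are linearly independent functionals on $\Usz$ this forces $e_2^{(-1)}=e_2^{(0)}=0$. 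The same argument with $a_3$ in place of $a_2$ gives $\tilde c_3\equiv c_3$, proving \eqref{eq:2dqnl:j2_j3_exact}.

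It remains to prove the sum rule \eqref{eq:2dqnl:sum_tilc1_eq_3c1}, and here force consistency is powerless — $S_1$ vanished identically regardless of the values $\tilde c_1^{(-1)},\tilde c_1^{(0)},\tilde c_1^{(1)}$, so $\sum_m\tilde c_1^{(m)}$ lies in the kernel of the patch test — so energy consistency \eqref{eq:2d:qnl_energy_cons} must enter. I would split the horizontal‑bond coefficients of $\Hac_\mF$, by linearity in the site potentials, into (i) the QCE contribution ($\tilde V\equiv V$) and (ii) the contribution of the perturbation $\delta_{ij}:=\tilde V_{ij}-V_{ij}$ supported on $\L^{(0)}$. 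For (ii): differentiating $\tilde V(\mG\Rg)=V(\mG\Rg)$ twice in $\mG$ gives $\sum_{i,j}\delta_{ij}\,a_i\otimes a_j=0$ in $\R^{2\times2}$; by Propositions~\ref{th:2d:cb_hess} and~\ref{th:2d:atm_hessrep} a \emph{bulk} perturbation of the site Hessian by $\delta_{ij}$ produces no $|D_ju|^2$‑terms (only strain gradients), since the Cauchy--Born coefficients it generates are determined precisely by this contraction; because the reconstruction \eqref{eq:RULE-1}--\eqref{eq:RULE-3} is local (each site's Hessian is spread only over the bonds of its two adjacent triangles), the total contribution of a single row‑$0$ site to all horizontal bonds — distributed over rows $-1,0,1$ — equals that bulk coefficient, namely $0$; so (ii) contributes nothing to $\tilde c_1^{(-1)}+\tilde c_1^{(0)}+\tilde c_1^{(1)}$. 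For (i): the QCE hessian is atomistic ($V$) on $\La\cup\L^{(0)}$ plus the weighted continuum part. The site‑energy part gives the bulk value $c_1$ to bonds in rows $\le-1$, and to rows $0$ and $1$ it gives the truncated amounts obtained by dropping the missing (continuum‑side) apices; the continuum part adds $c_1\sum_{T\ni b}w_T$ to the $|D_1u|^2$‑coefficient of a horizontal bond $b$, which is $0$ in row $-1$ and, with $w_T=\#(\Lc\cap T)/6$, equals $\tfrac16 c_1$ and $\tfrac56 c_1$ in rows $0$ and $1$. Adding up gives $\tilde c_1^{(-1)}|_\qce=c_1$ and $\tilde c_1^{(0)}|_\qce+\tilde c_1^{(1)}|_\qce=2c_1$ (the two "half‑bulk" site contributions recover one full bulk $c_1$, and $\tfrac16c_1+\tfrac56c_1=c_1$), so $\sum_m\tilde c_1^{(m)}|_\qce=3c_1$; together with (ii) this is \eqref{eq:2dqnl:sum_tilc1_eq_3c1}.

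The main obstacle is part (i): performing the reconstruction \eqref{eq:RULE-1}--\eqref{eq:RULE-3} on the atomistic site energies near the interface carefully enough to see exactly how much each of the three rows $-1,0,1$ receives, and confirming that the partial continuum weights on the interface triangles complement this to exactly $3c_1$. I expect it is cleanest to exploit the hexagonal symmetry of $V$ and tangential translation invariance to cut down the number of independent contributions, and — as a warm‑up — to verify the whole computation first in the reduced hexagonally symmetric setting \eqref{eq:Ha_coeffs_hexsym}, where $c=2(\alpha_0+\alpha_1-\alpha_2-\alpha_3)$ and $X(D^2u)=\sum_i(\alpha_2|D_{i+2}D_iu|^2+\alpha_3|D_{i+3}D_iu|^2)$, before treating general $V$.
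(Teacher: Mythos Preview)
Your argument for \eqref{eq:2dqnl:j2_j3_exact} is correct and takes a genuinely different route from the paper. The paper tests $\<\Hac_\mF \mG x,\cdot\>=0$ with a single hat function $u=\delta_{\xi,\cdot}$, obtaining the \emph{pointwise} vector identity $\sum_{j=1}^6 \tilde c_j(\xi)a_j=0$ (with the convention $\tilde c_{j+3}(\xi):=\tilde c_j(\xi-a_j)$), and then specialises to $\xi\in\L^{(\pm1)}$ where tangential invariance kills the $j=1,4$ terms. Your approach is \emph{global}: you sum along rows, exploit that $S_1$ telescopes identically, and read off $e_2^{(m)}=e_3^{(m)}=0$ from the linear independence of the row-sum functionals $\sigma_m$. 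Your version is arguably cleaner and avoids the case-by-case testing at $\L^{(\pm1)}$; the paper's version is more local and would adapt more directly to interfaces without tangential invariance.

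For the sum rule \eqref{eq:2dqnl:sum_tilc1_eq_3c1}, your split into (i) QCE and (ii) perturbation $\delta_{ij}=\tilde V_{ij}-V_{ij}$ is correct but roundabout. The paper works directly with the single-site quadratic $\sum_{ij}(\tilde V_{ij}-V_{ij})D_iuD_ju$, which vanishes on affine $u$ by energy consistency; applying the rules \eqref{eq:RULE-1}--\eqref{eq:RULE-3} to this single expression produces local bond coefficients $\tilde c_{i,\rho}-c_{i,\rho}$ whose sum over $\rho$ is identified with $\sum_m(\tilde c_i^{(m)}-c_i)$, and then the already-proven $i=2,3$ case isolates $i=1$. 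Your part (ii) is precisely this argument; the paper simply applies it to the comparison QNL-vs-atomistic rather than QNL-vs-QCE, so that the baseline is $c_1$ at every bond and no separate QCE computation is needed. In other words, the ``main obstacle'' you flag --- the direct QCE calculation in (i) --- can be bypassed entirely: the locality you invoke in (ii) (a single row-$0$ site spreads its bond contributions only over rows $-1,0,1$, and the net spread equals the bulk coefficient) works just as well when comparing to $\Ha$ directly, since by Proposition~\ref{th:2d:atm_hessrep} the atomistic and Cauchy--Born bond coefficients coincide. That said, neither the paper nor your sketch fully spells out why the continuum-vs-atomistic replacement on $\Lc$ contributes nothing to $\sum_m(\tilde c_1^{(m)}-c_1)$; both tacitly rely on Proposition~\ref{th:2d:atm_hessrep} at this step.
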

\begin{proof}
  {\it 1. Properties of $\tilde{c}_2, \tilde{c}_3$: } By the same
  argument as in the 1D case (cf. Lemma \ref{th:qnl:sgrad_c0_eq_ddW})
  we can prove that
  \begin{equation}
    \label{eq:2dqnl:no_gf_hess}
    \< \Hac_\mF \mG x, u \> = 0 \qquad \forall u \in \Usz.
  \end{equation}
  We fix $\xi \in \L$ and test \eqref{eq:2dqnl:no_gf_hess} with
  $u(\eta) := \delta_{\xi,\eta}$ (i.e., a ``hat-function'') to obtain
  \begin{displaymath}
    \sum_{j = 1}^3 \tilde{c}_j(\xi) (-\mG \cdot a_j) + \sum_{j = 1}^3 \tilde{c}_j(\xi-a_j)
    (\mG \cdot a_j) = 0.
  \end{displaymath}
  If we define $c_{j+3}(\xi) := c_{j}(\xi-a_{j})$ for $j = 1, 2, 3$,
  then this can equivalently be stated as
  \begin{displaymath}
    - \mG \cdot \sum_{j = 1}^6 \tilde{c}_j(\xi) a_j = 0.
  \end{displaymath}
  Since this must hold for all $\mG \in \R^2$, we deduce that
  \begin{equation}
    \label{eq:2dqnl_consforccons:20}
    \sum_{j = 1}^6 \tilde{c}_j(\xi) a_j = 0 \qquad \forall \xi \in \L.
  \end{equation}
  Using that fact that $a_{j+3} = - a_j$ and $a_1 + a_3 + a_5 = 0$, we
  deduce that \eqref{eq:2dqnl_consforccons:20} is equivalent to
  \begin{equation}
    \label{eq:2dqnl_consforccons:20a}
    \tilde{c}_1(\xi) - \tilde{c}_4(\xi) = \tilde{c}_3(\xi)-\tilde{c}_6(\xi) = \tilde{c}_5(\xi)-\tilde{c}_2(\xi) \qquad
    \forall \xi \in \L.
  \end{equation}
  
  We now test \eqref{eq:2dqnl_consforccons:20a} with $\xi \in
  \L^{(1)}$. Due to the translation invariance of the modified
  site-energies it follows that $\tilde{c}_1(\xi) = \tilde{c}_4(\xi)$.
  Moreover, $\tilde{c}_3(\xi) = c_3$ and $\tilde{c}_2(\xi) = c_2$,
  which implies that
  \begin{displaymath}
    0 = c_3 - \tilde{c}_6(\xi) = \tilde{c}_5(\xi) - c_2.
  \end{displaymath}
  This implies \eqref{eq:2dqnl:j2_j3_exact} for $\xi \in
  \L^{(0)}$. Analogously, testing \eqref{eq:2dqnl_consforccons:20a} with $\xi \in
  \L^{(-1)}$ gives \eqref{eq:2dqnl:j2_j3_exact} for $\xi \in
  \L^{(-1)}$.

  {\it Properties of $\tilde{c}_1$: } We are left to establish the
  statements concerning the coefficients $\tilde{c}_1$. Due to
  translation invariance of the site potential it immediately follows
  that $\tilde{c}_j(\xi) = \tilde{c}_j(\xi+a_1)$, hence we can write
  $\tilde{c}_j(\xi) = \tilde{c}_j^{(m)}$ for $\xi \in \L^{(m)}$, $m =
  -1, 0, 1$.

  Finally, \eqref{eq:2dqnl:sum_tilc1_eq_3c1} is a consequence of the
  energy consistency \eqref{eq:2d:qnl_energy_cons}.  If we allowed
  noncompact test functions (as, e.g., in a periodic setting), then we
  could take the second variation of $\Ea(\mF x)=\Eqnl(\mF x)$ along
  the displacement $u=\mG x$ and obtain $\<\Ha_\mF \mG x, \mG x\> =
  \<\Hqnl_\mF \mG x, \mG x\>$ which would imply
  \eqref{eq:2dqnl:sum_tilc1_eq_3c1}.  However, in our case $\mG
  x\notin \Usz$, which makes the proof of
  \eqref{eq:2dqnl:sum_tilc1_eq_3c1} more involved.
  
    We start with noticing that the energy consistency implies
  \[
  \sum_{i,j=1}^6 (\tilde{V}_{i,j}-V_{i,j}) \, D_i u(\xi) D_j u(\xi)
  =0
  \]
  for $u=\mG x$ and some $\xi\in\Lambda^{(0)}$. We then rewrite this using the rules \eqref{eq:RULE-1}--\eqref{eq:RULE-3} as
  \[
  \sum_{i=1}^3 \sum_{\substack{\rho\in\Lambda \\ \rho,\rho+a_i \in \Rg\cup\{0\}}} (\tilde{c}_{i,\rho} - c_{i,\rho} ) |D_i u(\xi+\rho)|^2
  + \tilde{X}(D^2 u(\xi))-X(D^2 u(\xi))
  =0
  \]
  with some $\tilde{c}_{i,\rho}$ and $c_{i,\rho}$.
  Next, we substitute $u=\mG x$ and use $D^2(\mG x)=0$:
  \begin{equation}\label{eq:2dqnl_consforccons:30}
  \sum_{i=1}^3 \sum_{\substack{\rho\in\Lambda \\ \rho,\rho+a_i \in \Rg\cup\{0\}}} (\tilde{c}_{i,\rho} - c_{i,\rho} ) |\mG a_i|^2
  =0
  .
  \end{equation}
  It remains to notice that, since $\tilde{c}_i^{(m)}$ and $c_i$ were constructed using the same rules as $\tilde{c}_{i,\rho}$ and $c_{i,\rho}$, we have
  \[
  \sum_{\substack{\rho\in\Lambda \\ \rho,\rho+a_i \in \Rg\cup\{0\}}} (\tilde{c}_{i,\rho} - c_{i,\rho} )
  =
  \sum_{m=-1}^1 (\tilde{c}_i^{(m)}-c_i)
  \qquad (i=1,2,3).
  \]
  Substituting this into \eqref{eq:2dqnl_consforccons:30} and using that $\tilde{c}^{(m)}_i=c^{(m)}_i$ for $i=2,3$, we get
  \[
  \sum_{m=-1}^1 (\tilde{c}_1^{(m)}-c_1) |\mG a_1|^2
  =0
  \]
  for all $\mG$, which immediately implies \eqref{eq:2dqnl:sum_tilc1_eq_3c1}.
\end{proof}

We see that the key difference, between 1D and 2D, for the stability
of homogeneous deformations is that the $|D_ju|^2$ coefficients in the
1D case are identical to those in the Cauchy--Born model for
force-consistent a/c couplings, while this need not be the case in 2D.
As a first step to showing that this can lead to an instability in
  2D, we establish another representation of $\Hqnl_\mF$.

\begin{lemma}
  \label{th:2d:better_qnl_hessrep}
  Under the conditions of Lemma
  \ref{th:2dqnl:consequence_of_forcecons}, we have
  \begin{equation}
    \label{eq:2d:better_qnl_hessrep}
    \< \Hqnl_\mF u, u \> = \< \Hc_\mF u, u \> + 2
    \b(\tilde{c}^{(1)}_1- \tilde{c}^{(-1)}_1\b) \< K_0 u, u \> + 
    \sum_{\xi\in\L} \hat{X}_\xi(D^2 u(\xi)),
  \end{equation}
  where $\hat{X}_\xi$ are quadratic forms of $D^2 u$ (not
    necessarily sums of squares), with $\hat{X}_\xi = 0$ for $\xi \in
    \L^\c$, and
  \begin{align*}
    \< K_0 u, u \> &:= 
    \sum_{\xi \in \L^{(0)}} D_2 D_1 u(\xi) D_1 u(\xi).
    %
  \end{align*}
\end{lemma}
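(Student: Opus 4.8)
The plan is to combine the two ``canonical'' hessian representations already at our disposal with a vertical telescoping along the interface. First I would take the strain--gradient form \eqref{eq:2d:qnl_hessian_straingrad} of $\Hqnl_\mF$ from Proposition \ref{th:2d:qnl_hess_straingrad} and subtract the Cauchy--Born representation $\<\Hc_\mF u, u\> = \sum_{j=1}^{3} c_j \sum_{\xi\in\L}|D_j u(\xi)|^2$ from Proposition \ref{th:2d:cb_hess}. Using Lemma \ref{th:2dqnl:consequence_of_forcecons}, so that $\tilde c_2\equiv c_2$, $\tilde c_3\equiv c_3$, and $\tilde c_1(\xi)=\tilde c_1^{(m)}$ for $\xi\in\L^{(m)}$ ($m\in\{-1,0,1\}$) with $\tilde c_1\equiv c_1$ on the remaining rows, this collapses to
\[
\<\Hqnl_\mF u, u\> - \<\Hc_\mF u, u\> = \sum_{m=-1}^{1}\bigl(\tilde c_1^{(m)}-c_1\bigr)\sum_{\xi\in\L^{(m)}}|D_1 u(\xi)|^2 + \sum_{\xi\in\L}\tilde X_\xi(D^2 u(\xi)) ,
\]
where by \eqref{eq:2d:qnlhess_strgrad_Xc} the last sum is already supported on $\{\xi:\xi_2\le 0\}$. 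It therefore remains to rewrite the three anomalous diagonal sums on the rows $m=-1,0,1$ as $2(\tilde c_1^{(1)}-\tilde c_1^{(-1)})\<K_0 u, u\>$ plus quadratic forms of $D^2 u$ that vanish for $\xi_2>0$.

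For that I would use the elementary hop identity $|D_1 u(\xi+a_2)|^2 = |D_1 u(\xi)|^2 + 2\, D_2 D_1 u(\xi)\, D_1 u(\xi) + |D_2 D_1 u(\xi)|^2$, applied with $\xi$ ranging over $\L^{(0)}$ --- so that $\xi+a_2$ exhausts $\L^{(1)}$ and $\xi-a_2$ exhausts $\L^{(-1)}$ --- together with the lattice identities $D_5 D_1 u(\eta) = -D_2 D_1 u(\eta-a_2)$ (from $a_5=-a_2$) and $a_2=a_1+a_3$. These re-express $\sum_{\xi\in\L^{(1)}}|D_1 u(\xi)|^2$ and $\sum_{\xi\in\L^{(-1)}}|D_1 u(\xi)|^2$ through $\sum_{\xi\in\L^{(0)}}|D_1 u(\xi)|^2$, the cross term $\<K_0 u, u\> = \sum_{\xi\in\L^{(0)}}D_2 D_1 u(\xi) D_1 u(\xi)$, and sums of squares of second differences on the interface strip. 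Substituting back, the consistency sum rule $\tilde c_1^{(-1)}+\tilde c_1^{(0)}+\tilde c_1^{(1)}=3c_1$ of Lemma \ref{th:2dqnl:consequence_of_forcecons} is exactly what cancels the surviving row-$0$ diagonal term $\sum_{\xi\in\L^{(0)}}|D_1 u(\xi)|^2$; the remaining cross terms then combine with prefactor $2(\tilde c_1^{(1)}-\tilde c_1^{(-1)})$, and every other term is, at each $\xi$ with $\xi_2\le 0$, a quadratic expression in the entries of $D^2 u(\xi)$. Gathering these with $\sum_\xi\tilde X_\xi(D^2 u(\xi))$ yields $\sum_\xi\hat X_\xi(D^2 u(\xi))$ with $\hat X_\xi\equiv 0$ on $\L^\c$; note that $\hat X_\xi$ need not be a sum of squares, since the telescoping also throws off indefinite second-difference cross terms.

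I expect the real work to be the bookkeeping in this second step. The telescoping of the row-$(-1)$ anomaly naturally produces a cross term aligned with the $a_5$ direction rather than with $a_2$, and matching it against $\<K_0 u, u\>$ without leaving an uncontrolled diagonal strain term is precisely where the sum rule $\sum_m\tilde c_1^{(m)}=3c_1$ enters. One has to track carefully how the $a_2$- and $a_5$-hops redistribute the coefficients $\tilde c_1^{(m)}-c_1$ across the interface strip, use $D_5 D_1 u(\eta)=-D_2 D_1 u(\eta-a_2)$ repeatedly to push everything onto row $0$, and check that the coefficient emerging in front of $\<K_0 u, u\>$ is exactly $2(\tilde c_1^{(1)}-\tilde c_1^{(-1)})$ while all leftover pieces are genuine quadratic forms of $D^2 u$.
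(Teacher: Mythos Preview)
Your plan is essentially the paper's own argument: start from the strain--gradient representation of $\Hqnl_\mF$, subtract $\Hc_\mF$, use Lemma~\ref{th:2dqnl:consequence_of_forcecons} to isolate the three anomalous $|D_1 u|^2$ rows, and then telescope vertically across the interface using the identity $|D_1u(\xi+a_2)|^2-|D_1u(\xi)|^2 = D_2D_1u(\xi)\,(2D_1u(\xi)+D_2D_1u(\xi))$ together with the sum rule $\sum_m\tilde c_1^{(m)}=3c_1$. The paper organises the algebra slightly differently (it uses the sum rule \emph{first} to split the row-$0$ term and pair it with rows $\pm1$, then factors $a^2-b^2$), but the content is identical.

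There is one step you have not pinned down. After the telescoping and sum rule you are left with the residual term
\[
-2(\tilde c_1^{(-1)}-c_1)\sum_{\xi\in\L^{(0)}} D_5D_2D_1 u(\xi)\,D_1 u(\xi),
\]
which is a \emph{third} difference paired with a \emph{first} difference, and so is not yet a quadratic form in $D^2u(\xi)$ at any single site. The identity $D_5D_1u(\eta)=-D_2D_1u(\eta-a_2)$ that you propose to use ``repeatedly'' does not by itself cure this: it shifts the second difference to row $-1$ and still leaves you with a first--difference factor. The paper handles this with a summation by parts along $\L^{(0)}$ in the $a_1$-direction,
\[
\sum_{\xi\in\L^{(0)}} D_5D_2D_1 u(\xi)\,D_1 u(\xi)
= -\sum_{\xi\in\L^{(0)}} D_5D_2 u(\xi)\,D_4D_1 u(\xi),
\]
which turns the offending term into a genuine (indefinite) product of two second differences. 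This is precisely the source of the ``not necessarily sums of squares'' caveat in the statement, and it is the only missing ingredient in your sketch.
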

\begin{proof}

\begingroup \allowdisplaybreaks
  From Lemma \ref{th:2dqnl:consequence_of_forcecons} we have
  \begin{align*}
\notag
  & \hspace{-1cm} 
  \< \Hqnl_\mF u, u \>  - \< \Hc_\mF u, u \> - \sum_{\xi \in \L} \tilde{X}_\xi(D^2u(\xi))
  \\=~&
    \b( \tilde{c}^{(1)}_1 - c_1 \b)
    \sum_{\xi \in \L^{(0)}}
    \b( |D_1 u(\xi+a_2)|^2 - |D_1
    u(\xi)|^2 \b)
	\\~&+
	\b( \tilde{c}^{(-1)}_1 - c_1 \b)
	\sum_{\xi \in \L^{(0)}}
    \b( |D_1 u(\xi+a_5)|^2 - |D_1
    u(\xi)|^2 \b)
  \\=~&
    \b( \tilde{c}^{(1)}_1 - c_1 \b)
    \sum_{\xi \in \L^{(0)}} \b( D_1 u(\xi+a_2) - D_1 u(\xi) \b) \b(
    D_1 u(\xi+a_2) + D_1 u(\xi) \b)
	\\~&+
	\b( \tilde{c}^{(-1)}_1 - c_1 \b)
    \sum_{\xi \in \L^{(0)}} \b( D_1 u(\xi+a_5) - D_1 u(\xi) \b) \b(
    D_1 u(\xi+a_5) + D_1 u(\xi) \b)
  \\=~&
    \b( \tilde{c}^{(1)}_1 - c_1 \b)
    \sum_{\xi \in \L^{(0)}} D_2 D_1 u(\xi)\,(2 D_1 u(\xi) + D_2 D_1 u(\xi))
	\\~&+
    \b( \tilde{c}^{(-1)}_1 - c_1 \b)
    \sum_{\xi \in \L^{(0)}} D_5 D_1 u(\xi)\,(2 D_1 u(\xi) + D_5 D_1 u(\xi))
  \\=~&
    (\tilde{c}^{(1)}_1 - \tilde{c}^{(-1)}_1)
    \sum_{\xi \in \L^{(0)}}
    D_2D_1 u(\xi) D_1 u(\xi)
  \\&
    -(\tilde{c}^{(-1)}_1 - c_1) \sum_{\xi
      \in \L^{(0)}} D_5D_2D_1 u(\xi) D_1u(\xi)  +  \dots,
  \end{align*}%
\endgroup
  where ``$\dots$'' stands for some sum of squares of $D^2
  u(\xi)$.

  Summation by parts,
  \begin{displaymath}
    \sum_{\xi \in \L^{(0)}}
    D_5D_2D_1 u(\xi) D_1 u(\xi) = - \sum_{\xi \in \L^{(0)}} D_5D_2
    u(\xi) D_4D_1u(\xi)
  \end{displaymath}
  completes the proof.
\end{proof}

\subsection{Non-existence of a universally stable method in 2D}
\label{sec:univ_stab_2d}
Lemma \ref{th:2d:better_qnl_hessrep} suggests that, unless
  $\tilde{c}^{(1)}_1 - \tilde{c}^{(-1)}_1=0$, there is a discrepancy
  between $\Hqnl_\mF$ and $\Hc_\mF$ that is not a quadratic in $D^2 u$
  (and, as will be shown in \S \ref{sec:2d:qnl_hess_instability},
  unavoidably leads to an instability).  We next establish that in
  fact $\tilde{c}^{(1)}_1 - \tilde{c}^{(-1)}_1 \not\equiv 0$ for a
  large family of a/c schemes, which not only includes examples from
  \S \ref{sec:2d:def_qnl} but also all geometric reconstruction type
  variants \cite{E:2006,PRE-ac.2dcorners}.  We also present in \S
  \ref{sec:2d:qnl_hess_explicit} explicit calculations for the three
  methods from \S \ref{sec:2d:def_qnl}.

\begin{proposition}\label{prop:univ_stab_2d:gen_grac}
  Consider the following generalization of the geometric
  reconstruction a/c (GRAC) method \cite{PRE-ac.2dcorners}:
  \begin{equation}\label{eq:univ_stab_2d:gen_grac_2}
    \tilde{V}(\bfg)= \sum_{\ell=1}^L w_\ell V(\mC_\ell \bfg),
\end{equation}
where $w_\ell \in \R$, $w_\ell\ne0$, and $\mC_\ell \in \R^{6\times 6}$
($\ell=1,\ldots,L$).
Assume that it satisfies the force and energy consistency conditions
\eqref{eq:2d:qnl_nogf}, \eqref{eq:2d:qnl_energy_cons}.  Further,
assume hexagonal symmetry \eqref{eq:2d:hex_symm} of $V$, with
$\alpha_2=\alpha_3=0$. 

Then, there exist $p_0, p_1 \in \R$ (depending on $w_\ell, \mC_\ell$)
such that $p_0 - p_1 = 1$ and
\begin{displaymath}
  \tilde{c}_1^{(1)} - \tilde{c}_1^{(-1)} = p_0 \alpha_0 + p_1 \alpha_1.
\end{displaymath}

In particular, there exists no choice of method parameters $w_\ell,
\mC_\ell$, such that $\tilde{c}_1^{(1)}-\tilde{c}_1^{(-1)} = 0$ for
all parameters $(\alpha_0, \alpha_1)$.
\end{proposition}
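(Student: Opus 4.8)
The non-existence assertion is immediate once the displayed identity is available: were $\tilde c_1^{(1)}-\tilde c_1^{(-1)}=p_0\alpha_0+p_1\alpha_1$ to vanish for all $(\alpha_0,\alpha_1)$, then $p_0=p_1=0$, contradicting $p_0-p_1=1$. So the entire content of the proposition is the identity itself, and its crux is the normalisation $p_0-p_1=1$.

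I would first establish the \emph{shape} $p_0\alpha_0+p_1\alpha_1$ by linearity. The passage from the canonical QNL Hessian \eqref{eq:2d:qnl_hessian_defn} to the strain-gradient coefficients $\tilde c_j(\xi)$ via the rules \eqref{eq:RULE-1}--\eqref{eq:RULE-3} (Proposition \ref{th:2d:qnl_hess_straingrad}, Lemma \ref{th:2dqnl:consequence_of_forcecons}) is linear in the three data $\tilde V''=(\tilde V_{ij})_{i,j=1}^6$, $V''=(V_{ij})_{i,j=1}^6$ and $\ddW(\mF)$. Under hexagonal symmetry \eqref{eq:2d:hex_symm} with $\alpha_2=\alpha_3=0$ one has $V''=\alpha_0 I+\alpha_1 P$, where $P$ is the nearest-neighbour circulant ($P_{ij}=1$ iff $j\equiv i\pm1\ (\mathrm{mod}\ 6)$); the generalised-GRAC reconstruction \eqref{eq:univ_stab_2d:gen_grac_2} then gives $\tilde V''=\sum_\ell w_\ell\,\mC_\ell^\top V''\mC_\ell=\alpha_0\big(\sum_\ell w_\ell\mC_\ell^\top\mC_\ell\big)+\alpha_1\big(\sum_\ell w_\ell\mC_\ell^\top P\mC_\ell\big)$; and by \eqref{eq:Ha_coeffs_hexsym} and Proposition \ref{th:2d:cb_hess}, $\ddW(\mF)=\tfrac12\sum_{j=1}^3 c_j\,a_j\otimes a_j$ with $c_j=2(\alpha_0+\alpha_1)$. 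Each of these data is linear in $(\alpha_0,\alpha_1)$ with coefficients built only from $w_\ell,\mC_\ell$; hence so is $\tilde c_1^{(1)}-\tilde c_1^{(-1)}$, which yields the claimed form.

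To pin down $p_0-p_1$ I would evaluate at the distinguished potential $(\alpha_0,\alpha_1)=(1,-1)$, i.e. $V''=I-P$. This direction is degenerate: the two vectors listing the first, respectively the second, Cartesian component of $a_1,\dots,a_6$ span the space $\{(\mF\cdot a_i)_{i=1}^6:\mF\in\R^2\}$ of homogeneous configurations and both lie in $\ker(I-P)$, so $c_j=2(\alpha_0+\alpha_1)=0$; consequently $\Hc_\mF=0$, and since $\alpha_2=\alpha_3=0$ forces $\Ha_\mF=\Hc_\mF$ (first simple case of \S\ref{sec:2d:simple_cases}), also $\Ha_\mF=0$. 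The generalised GRAC is tangentially translation invariant, so Lemma \ref{th:2d:better_qnl_hessrep} applies and, in this case, reduces to
\begin{displaymath}
  \< \Hqnl_\mF u, u \> \;=\; 2\,(p_0-p_1)\,\< K_0 u, u \> \;+\; \sum_{\xi\in\L}\hat X_\xi(D^2 u(\xi)), \qquad \hat X_\xi=0 \ \text{ for } \xi\in\L^\c .
\end{displaymath}
On the other hand, since $\ddW(\mF)=0$ and $V''=I-P$ is banded, the left-hand side is computed directly from \eqref{eq:2d:qnl_hessian_defn}: rule \eqref{eq:RULE-1} turns the atomistic part $\sum_{\xi\in\La}\sum_{i,j}(I-P)_{ij}D_i u(\xi)D_j u(\xi)$ into a sum localised in the rows $\L^{(m)}$, $m=-1,0,1$, which, together with the interface part $\sum_{\xi\in\L^{(0)}}\sum_{i,j}\tilde V_{ij}D_i u(\xi)D_j u(\xi)$, is reorganised using the force-consistency relation \eqref{eq:2dqnl_consforccons:20} (equivalently \eqref{eq:2dqnl_consforccons:20a}) to eliminate the undetermined interface coefficients and summation by parts to sweep the residual $D^2u$-terms into $\sum_\xi\hat X_\xi$. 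Reading off the coefficient of $\< K_0 u, u\>$ then gives $p_0-p_1=1$; the case $w_1=1$, $\mC_1=I$ (QCE) serves as a useful check.

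The main obstacle is this last matching step. Energy consistency by itself supplies only the weaker relation $\sum_{m=-1}^{1}\tilde c_1^{(m)}=3c_1$ (Lemma \ref{th:2dqnl:consequence_of_forcecons}), which leaves the antisymmetric combination $\tilde c_1^{(1)}-\tilde c_1^{(-1)}$ free; extracting its exact value $1$, \emph{uniformly} in $(w_\ell,\mC_\ell)$, requires the full force-consistency condition and a careful bookkeeping of which lattice sites and triangles feed into the two interface bonds $D_1 u(\cdot)$ in the rows $\L^{(\pm1)}$ (cf. Figure \ref{fig:influence}). One must in particular verify that the non-banded reconstruction matrices $\mC_\ell$ enter the final answer only through $\sum_\ell w_\ell\mC_\ell^\top\mC_\ell$ and $\sum_\ell w_\ell\mC_\ell^\top P\mC_\ell$, and that the leftover quadratic terms in $D^2u$ really can be packaged as (not necessarily sign-definite) forms $\hat X_\xi$ supported on $\{\xi_2\le 0\}$, in accordance with Lemma \ref{th:2d:better_qnl_hessrep}.
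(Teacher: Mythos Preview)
Your setup is sound: linearity of the strain-gradient construction in $(\alpha_0,\alpha_1)$ gives the form $p_0\alpha_0+p_1\alpha_1$, and evaluating at $(\alpha_0,\alpha_1)=(1,-1)$ is exactly the right move (the paper does the same). The gap is in the ``matching step'' you yourself flag as the main obstacle: the mechanism you propose for it does not work. The force-consistency relations \eqref{eq:2dqnl_consforccons:20}/\eqref{eq:2dqnl_consforccons:20a} only fix $\tilde c_2,\tilde c_3$ and the translation invariance of $\tilde c_1$; together with energy consistency they give $\sum_m \tilde c_1^{(m)}=3c_1$, but, as you observe, they leave the antisymmetric difference $\tilde c_1^{(1)}-\tilde c_1^{(-1)}$ free. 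So ``eliminating the undetermined interface coefficients'' via these relations is not possible---the interface Hessian $\tilde V''=\sum_\ell w_\ell \mC_\ell^\top H\mC_\ell$ still carries free parameters that must be dealt with directly.

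The paper supplies two ingredients you did not find. First, a \emph{reduction to a single reconstruction matrix}: with $\mB:=\sum_\ell w_\ell\mC_\ell$ and $\ttilde V(\bfg):=V(\mB\bfg)$, energy consistency forces $\sum_\ell w_\ell=1$ and $\mB\mF\Rg=\mF\Rg$, and an elementary identity shows that $\tilde V''-\ttilde V''$ acts as a quadratic form in \emph{second} differences only; hence it contributes nothing to $\tilde c_1^{(m)}$. Thus the dependence on $(w_\ell,\mC_\ell)$ collapses to the single $6\times 6$ matrix $\mB$---not to the combinations $\sum_\ell w_\ell\mC_\ell^\top\mC_\ell$ and $\sum_\ell w_\ell\mC_\ell^\top P\mC_\ell$ you anticipated. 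Second, for the reduced scheme one writes down the explicit entrywise formula
\[
  \tilde c_1^{(1)}-\tilde c_1^{(-1)}
  = -\smfrac13(\alpha_0+4\alpha_1)
    +\ttilde V_{1,3}+\ttilde V_{2,3}+\ttilde V_{2,4}
    -\ttilde V_{4,6}-\ttilde V_{5,6}-\ttilde V_{5,1},
\]
and, at $(\alpha_0,\alpha_1)=(1,-1)$, uses \emph{energy} consistency (not force consistency---indeed the paper remarks that \eqref{eq:2d:qnl_nogf} is not used explicitly) to express the columns $b_1,b_2$ of $\mB$ in terms of $b_3,\dots,b_6$. A short linear-algebra calculation with the explicit $H=I-P$ then gives $\tilde c_1^{(1)}-\tilde c_1^{(-1)}=1$ identically in the remaining free columns.
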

\begin{proof}
{\it Step 1 (reduction to a GRAC).}
Consider a method with interface site potential
\begin{equation}\label{eq:univ_stab_2d:gen_grac}
\ttilde{V}(\bfg):= V(\mB \bfg),
\end{equation}
where $\mB := \sum_{\ell=1}^L w_\ell \mC_\ell$.  We show that it is
energy and force consistent and moreover
$\<\ddel\tilde{V}(\mF\Rg)u,u\> - \<\ddel\ttilde{V}(\mF\Rg)u,u\>$ is a
sum of squares of $D^2 u$ (and hence
$\tilde{c}_1^{(1)}-\tilde{c}_1^{(-1)}$ is the same for both methods).

Indeed, substituting $V(\bfg) = v_0 + {\bm f}\cdot \bfg$ into the energy consistency condition \eqref{eq:2d:qnl_energy_cons} yields
\[
v_0 \bigg(\sum_{\ell=1}^L w_\ell - 1\bigg) + {\bm f}\cdot (\mB \mF \Rg - \mF \Rg)
=0
\qquad\forall v_0\in\R,\quad \forall {\bm f}\in\R^6\quad \forall \mF\in\R^2.
\]
Hence we get $\sum_{\ell=1}^L w_\ell=1$ and $\mB \mF \Rg = \mF \Rg$ for all $\mF$. These identities make it straightforward to verify the energy and force consistency of \eqref{eq:univ_stab_2d:gen_grac}, given the energy and force consistency of \eqref{eq:univ_stab_2d:gen_grac_2}.

Finally, to show that $\b\<\b(\ddel\tilde{V}(\mF\Rg)-\ddel\ttilde{V}(\mF\Rg)\b)u,u\b\>$ is a sum of squares of $D^2 u$, compute
\begin{equation}\label{eq:univ_stab_2d:ddelV}
\ddel\tilde{V}(\mF\Rg) = \sum_{\ell=1}^L w_\ell \mC_\ell^\transpose \mH \mC_\ell,
\end{equation}
where $\mH := \ddel V(\mF\Rg) \in\R^{6\times 6}$ is the hessian of $V$.
We apply the identity
\[
  w_\ell \mC_\ell^\transpose \mH \mC_\ell
+ w_j    \mC_j^\transpose \mH \mC_j
= (w_\ell+w_j) \big( \smfrac{w_\ell \mC_\ell + w_j \mC_j}{w_\ell+w_j} \big)^\transpose \mH \big( \smfrac{w_\ell \mC_\ell + w_j \mC_j}{w_\ell+w_j} \big)
+ \smfrac{w_j w_\ell}{w_\ell+w_j} (\mC_\ell-\mC_j)^\transpose \mH (\mC_\ell-\mC_j)
\]
to \eqref{eq:univ_stab_2d:ddelV} $L-1$ times,
noticing that the finite difference operator $(\mC_\ell-\mC_j) Du$ is zero on all affine functions and hence can be represented as a sum of second differences.
As a result, we express $\ddel\tilde{V}(\mF\Rg)$ as $\ddel\ttilde{V}(\mF\Rg)$ plus squares of second differences.

{\it Step 2 (proof for a GRAC).}  It is now sufficient to establish
this proposition for a simpler method
\eqref{eq:univ_stab_2d:gen_grac}.  Using the rules
\eqref{eq:RULE-1}--\eqref{eq:RULE-3}, we can express
\[
\tilde{c}_1^{(1)}-\tilde{c}_1^{(-1)} =
-\smfrac13 (\alpha_0+4 \alpha_1)
+\ttilde{V}_{1,3}+\ttilde{V}_{2,3}+\ttilde{V}_{2,4}
-\ttilde{V}_{4,6}-\ttilde{V}_{5,6}-\ttilde{V}_{5,1}
,
\]
which implies linearity of $\tilde{c}_1^{(1)}-\tilde{c}_1^{(-1)}$ with
respect to $\alpha_0$ and $\alpha_1$, that is, $\tilde{c}_1^{(1)} -
\tilde{c}_1^{(-1)} = p_0 \alpha_0 + p_1 \alpha_1$.

To see that $p_0 - p_1=1$, choose coefficients $\alpha_0 = 1$ and
$\alpha_1 = -1$, i.e., so that $p_0 - p_1 =
\tilde{c}_1^{(1)}-\tilde{c}_1^{(-1)}$).  In this case the hessian of
$V$ is given by
\begin{equation}\label{eq:univ_stab_2d:H}
\mH =
\ddel V(\mF\Rg)
= \left(\begin{smallmatrix}
-1 & 1 & 0 & 0 & 0 & 1 \\
1 & -1 & 1 & 0 & 0 & 0 \\
0 & 1 & -1 & 1 & 0 & 0 \\
0 & 0 & 1 & -1 & 1 & 0 \\
0 & 0 & 0 & 1 & -1 & 1 \\
1 & 0 & 0 & 0 & 1 & -1
\end{smallmatrix}\right)
\end{equation}
and $\ddel \ttilde{V} = \mB^\transpose \mH \mB$.
Next, denote the column-vectors of $\mB$ as $b_i\in\R^6$ and hence express
\[
\tilde{c}_1^{(1)}-\tilde{c}_1^{(-1)} = 1
+
 b_1^\transpose \mH b_3
+b_2^\transpose \mH b_3
+b_2^\transpose \mH b_4
-b_4^\transpose \mH b_6
-b_5^\transpose \mH b_6
-b_5^\transpose \mH b_1
\]
(here we used $\smfrac13 (\alpha_0+4 \alpha_1)=-1$).

Energy consistency \eqref{eq:2d:qnl_energy_cons} implies $\sum_{i=1}^6
(\mF a_i) b_i = (\mF a_j)_{j=1}^6$ (we refer to
\cite{PRE-ac.2dcorners} for details).  Using this identity with $\mF =
\smfrac23 (a_6+a_1)^\transpose$ and with $\mF = \smfrac23
(a_2+a_3)^\transpose$ allows to express
\[
b_1 = b_3+b_4-b_6 + (1,0,-1,-1,0,1)^\transpose
\quad\text{and}\quad
b_2 = b_5+b_6-b_3 + (0,1,1,0,-1,-1)^\transpose
.
\]
Substituting these expressions into $\tilde{c}_1^{(1)}-\tilde{c}_1^{(-1)}$ yields, after all cancellations,
\[
\tilde{c}_1^{(1)}-\tilde{c}_1^{(-1)} = 1 + (1,0,-1,-1,0,1)\, \mH (b_3-b_5) + (0,1,1,0,-1,-1)\, \mH (b_3+b_4),
\]
which equals identically $1$ once \eqref{eq:univ_stab_2d:H} is used.
\end{proof}

\begin{remark}
  Suppose that (in some practical problem) $\mF=\mF_0$ is fixed and
  given {\it a priori}.

  1. One can then consider energy consistent methods with ghost-force
  correction, such as \cite{Ortiz:1995a} (i.e., methods that satisfy
  \eqref{eq:2d:qnl_nogf} only for $\mF=\mF_0$).  Since we do not use
  explicitly force consistency \eqref{eq:2d:qnl_nogf} in the proof,
  Proposition \ref{prop:univ_stab_2d:gen_grac} would also be valid for
  such methods.

  2. Nevertheless, it is possible to precompute
  $\tilde{c}_1^{(1)}-\tilde{c}_1^{(-1)}$ and subtract the term
  $\smfrac12 (\tilde{c}_1^{(1)}-\tilde{c}_1^{(-1)})
  ((g_2-g_3)^2-(g_5-g_6)^2)$ from $\tilde{V}(\bfg)$, thus correcting
  the error in $\tilde{c}_i(\xi)$.  We will, however, not pursue in
  this work the questions of applicability of such correction beyond
  the nearest-neighbour plane-interface scalar setting.
\end{remark}

\subsection{Instability}
\label{sec:2d:qnl_hess_instability}
It is fairly staightforward to see that $\gamma(K_0)=\gamma(-K_0)<0$ (cf.\ \eqref{eq:2d:better_qnl_hessrep}).
In this section we will show that the strain gradient correction (third group) in \eqref{eq:2d:better_qnl_hessrep}) cannot improve this indefiniteness of $K_0$, which will immediately imply the instability result (Corollary \ref{th:2d:cor_instab}).

The strain gradient correction is clearly bounded by an operator of
the form
\begin{equation}
  \label{eq:2d:defn_S}
  \< S u, u \> := \sum_{\xi \in \L^{(0)}} | D^2 u(\xi) |^2,
\end{equation}
that is, $|\hat{X}_\xi(D^2 u)| \leq C | D^2 u(\xi) |^2$. We
therefore consider generic operators of the form 
\begin{equation}
  \label{eq:2d:defn_Kkappa}
  \< K_\kappa u, u \> := \< K_0 u, u \> + \kappa \< Su, u \>.
\end{equation}
We will show that $K_\kappa$ is indefinite, independent of the choice
of $\kappa$, and hence independent of the form the strain gradient
correction $\hat{X}_\xi$ takes. Note that this result is also a
preparation for our analysis of the 2D analogue of the stabilisation
\eqref{eq:1dqnl:defn_Estab}.

\begin{lemma}
  \label{th:2d:qnl_hess_instab}
  There exists a constant $c > 0$ such that
  \begin{equation}
    \inf_{\substack{ u \in \Usz \\ \| D u \|_{\ell^2} = 1}} \< K_\kappa u,
    u\> =: \lambda_\kappa \leq - \frac{c}{(\kappa+1)^2}.
  \end{equation}
\end{lemma}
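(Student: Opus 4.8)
\emph{Proof plan.} The plan is to exhibit, for each $\kappa\ge0$, a long-wavelength test function whose wavelength is tuned to $\kappa$, and to let the leading contributions of $\<K_0u,u\>$ and $\kappa\<Su,u\>$ (cf.\ \eqref{eq:2d:better_qnl_hessrep}, \eqref{eq:2d:defn_S}, \eqref{eq:2d:defn_Kkappa}) compete against each other in the continuum limit. Concretely, I would fix $U\in C_c^\infty(\R^2)$ (to be chosen) and set $u_\eps(\xi):=U(\eps\xi)$ for $\xi\in\L$ and $\eps\in(0,1]$, so that $u_\eps\in\Usz$. Combining Taylor expansion with Riemann-sum estimates --- using that $\L$ has covolume $\sqrt3/2$ while the interface row $\L^{(0)}$ is an isometric copy of $\Z$ --- one expects, as $\eps\to0^+$,
\begin{align*}
  \|Du_\eps\|_{\ell^2}^2 &= N(U)+O(\eps), & N(U)&:=\smfrac{2}{\sqrt3}\int_{\R^2}\sum_{j=1}^3|\D U\cdot a_j|^2\dx,\\
  \<K_0u_\eps,u_\eps\> &= \eps^2\b(c_K(U)+O(\eps)\b), & c_K(U)&:=\int_\R\b[(a_2^\top\D^2U\,a_1)(\D U\cdot a_1)\b]\big|_{x_2=0}\,dx_1,\\
  \<Su_\eps,u_\eps\> &= \eps^3\b(c_S(U)+O(\eps)\b), & c_S(U)&:=\int_\R\b[\sum_{i,j=1}^6|a_i^\top\D^2U\,a_j|^2\b]\big|_{x_2=0}\,dx_1 \ge 0,
\end{align*}
where $\D^2U$ is the Hessian and the $O$-constants depend only on $U$. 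Note that the error in $\<K_0u_\eps,u_\eps\>$ is $O(\eps^3)=o(\eps^2)$, and the one in $\kappa\<Su_\eps,u_\eps\>$ is $\kappa\,O(\eps^4)=o(\kappa\eps^3)$, so both are negligible at the orders that matter.

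The conceptual heart is that $c_K(U)$ can be made strictly negative. Indeed $(a_2^\top\D^2U\,a_1)(\D U\cdot a_1)=\smfrac12\partial_{a_2}\b[(\D U\cdot a_1)^2\b]$ with $\partial_{a_2}=\smfrac12\partial_{x_1}+\smfrac{\sqrt3}{2}\partial_{x_2}$; integrating the $\partial_{x_1}$-part away (compact support) gives $c_K(U)=\smfrac{\sqrt3}{4}\Phi'(0)$, where $\Phi(s):=\int_\R|\D U(x_1,s)\cdot a_1|^2\,dx_1\ge0$. So, for instance, $U(x_1,x_2)=f(x_1)g(x_2)$ with $f,g\in C_c^\infty(\R)$, $f$ non-constant, and $g(0)>0>g'(0)$ yields $c_K(U)=\smfrac{\sqrt3}{2}g(0)g'(0)\,\|f'\|_{L^2(\R)}^2<0$, while $N(U)>0$ and $c_S(U)>0$ for this $U$. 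I would fix such a $U$ once and for all.

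With $\hat u_\eps:=u_\eps/\|Du_\eps\|_{\ell^2}\in\Usz$ (so $\|D\hat u_\eps\|_{\ell^2}=1$), the three expansions give absolute constants $a,b,\eps_1>0$ with
\begin{displaymath}
  \lambda_\kappa \le \<K_\kappa\hat u_\eps,\hat u_\eps\> = \frac{\<K_0u_\eps,u_\eps\>+\kappa\<Su_\eps,u_\eps\>}{\|Du_\eps\|_{\ell^2}^2} \le -a\eps^2+b\kappa\eps^3\qquad(0<\eps\le\eps_1,\ \kappa\ge0),
\end{displaymath}
and it remains to minimise the right-hand side over $\eps$. Choosing $\eps\sim\min(\eps_1,\kappa^{-1})$: for large $\kappa$ the interior minimiser $\eps^\star=\smfrac{2a}{3b\kappa}$ is admissible and gives value $-\smfrac{4a^3}{27b^2}\kappa^{-2}$; for $\kappa$ bounded the fixed choice $\eps=\eps_1$ gives a value $\le-\smfrac13a\eps_1^2$. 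Since $\kappa^2\le(\kappa+1)^2$ and $(\kappa+1)^2$ is bounded on any bounded $\kappa$-range, both cases yield a bound of the asserted form $-c(\kappa+1)^{-2}$ with $c>0$ absolute, which is the claim. The only genuine labour is the verification of the three asymptotic expansions with the stated (benign) error orders; this is routine but needs a little care near the interface, since $K_0$ and $S$ involve only a few lattice rows. (Alternatively one may diagonalise $K_\kappa$ by a tangential Fourier transform and follow the lowest eigenvalue of the resulting family of quadratic forms as the tangential frequency $\to0$; the scaling $\eps\sim\kappa^{-1}$ above is just the choice of frequency $\sim\kappa^{-1}$.)
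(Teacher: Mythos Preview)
Your proof is correct and follows essentially the same strategy as the paper's: construct a rescaled test function for which $\<K_0 u,u\>\approx -\eps^2$ and $\<S u,u\>\approx \eps^3$ (with $\eps=1/N$ in the paper's notation), then optimise over the scale to obtain $\lambda_\kappa\le -C_1\eps^2+C_2\kappa\eps^3\le -c/(\kappa+1)^2$. The paper writes the ansatz directly in lattice coordinates $\xi=ma_1+na_2$ as a product $\alpha_m\beta_n=A(m/N)B(n/N)$, whereas you work with $u_\eps(\xi)=U(\eps\xi)$ in Cartesian coordinates and only specialise to a product at the end; your identification $c_K(U)=\tfrac{\sqrt3}{4}\Phi'(0)$ is a slightly cleaner way of seeing why the leading $K_0$-term can be given either sign, but the content is the same.
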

\begin{proof}
  To obtain this bound, we make a separation of variables ansatz,
  \begin{displaymath}
    u(\xi) = u(m a_1 + n a_2) = \alpha_m \beta_n,
  \end{displaymath}
  and we define $\alpha_m' := \alpha_{m+1} - \alpha_m, \alpha_m'' :=
  \alpha_{m+1}-2\alpha_m + \alpha_{m-1}$, and analogous notation
  for~$\beta$.

  Next, let $A, B \in C^\infty(\R)$ be compactly supported with $B(0)
  = 1$, and $B'(0) = 1, B''(0) = 0$.

  Let $N \in \N$ and define $\alpha_m := A(m/N)$ and $\beta_n :=
  B(n/N)$, then simple scaling arguments show that, for $N \geq
  N_0$ (sufficiently large),
  \begin{align*}
    & \beta_0' \approx N^{-1}, \qquad |\beta''_0| \lesssim N^{-4}, \\
    & \| \alpha \|_{\ell^2}^2 \approx N \| A\|_{L^2}^2, \quad
    \| \alpha' \|_{\ell^2}^2 \approx N^{-1} \| A' \|_{L^2}^2, \quad
    \| \alpha'' \|_{\ell^2}^2 \approx N^{-3} \| A'' \|_{L^2}^2,
  \end{align*}
  and analogous bounds for $\beta$ in terms of $B$.  Here and for the
  remainder of the proof, ``$\approx$'' indicates upper and lower
  bounds up to constants that are independent of $\kappa, N$.

  With these definitions and derived properties we obtain (after some
  work) that
  \begin{align*}
    \< K_0 u, u \> &= - \beta'_0 \| \alpha' \|_{\ell^2}^2 \approx - N^{-2},  \\
    \< S u, u \> &\approx |\beta_0|^2 \| \alpha'' \|_{\ell^2}^2 + |\beta_0'|^2 \|
    \alpha' \|_{\ell^2}^2 \approx N^{-3}, \quad
    \text{and}
    \\
    \| D u \|_{\ell^2(\L)}^2 &\approx \| \alpha' \|_{\ell^2}^2 \| \beta
    \|_{\ell^2}^2 + \| \alpha \|_{\ell^2}^2 \| \beta' \|_{\ell^2}^2 \approx 1,
  \end{align*}
  that is,
  \begin{displaymath}
    \lambda_\kappa \leq \frac{\< K_\kappa u, u \>}{\|D u\|_{\ell^2}^2} \leq - C_1 N^{-2} +
    C_2 \kappa N^{-3},
  \end{displaymath}
  where $C_1, C_2 > 0$ depend on $A, B$ but are independent
  of $\kappa$ and of $N$ (provided $N \geq N_0$).

  If $\kappa = \frac{2 C_1}{3 C_2} N_0 =: \kappa_0$, choosing $N = N_0$,
  we obtain $\lambda(\kappa) \leq - \frac{C_1}{3} N_0^{-2}$. 

  For $\kappa > \kappa_0$, let $N = \frac{3 C_2}{2 C_1} \kappa$, then
  $N \geq N_0$ and this implies that $\lambda_\kappa \leq -
  \frac{4}{27} C_1^3 C_2^{-2} \kappa^{-2}$.  This completes the proof.
\end{proof}

We can deduce the following instability result.  Ignoring the
(non-trivial) technical conditions, the result can be read as follows:
{\it if the error in the coefficients $\tilde{c}^{(m)}_1$ does not
  cancel at a critical strain $\mG$ (where $\Ha_\mG$ becomes unstable)
  then the QNL method will necessarily predict a reduced critical
  strain with an $O(1)$ error. That is, the critical deformation $\mG$
  {\em cannot} be predicted with arbitrarily high accuracy by the QNL
  method. } See \S~\ref{sec:err_crit_strains} for further discussion
of this issue.

\begin{corollary}
  \label{th:2d:cor_instab}
  Consider the hexagonally symmetric case \eqref{eq:2d:hex_symm} with
  $\alpha_2 = \alpha_3 = 0$. Suppose, moreover, that
  \begin{itemize}
  \item[(i)] $\ca(\mO) = 0$, and that
  \item[(ii)] $\tilde{c}^{(1)}_1(\mO) - \tilde{c}^{(-1)}_1(\mO) \neq
    0$.
  \end{itemize}
  Then, $\cqnl(0) < 0$. 

  In particular, $\cqnl(\mG) < 0$ for sufficiently small $|\mG|$.
\end{corollary}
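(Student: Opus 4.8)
The plan is to evaluate the hessian representation of Lemma~\ref{th:2d:better_qnl_hessrep} at the undeformed state $\mF=\mO$, where the Cauchy--Born contribution drops out, and then to dominate the leftover strain-gradient correction by the operator $S$ so that the indefiniteness estimate of Lemma~\ref{th:2d:qnl_hess_instab} applies verbatim. First I would observe that, in the hexagonally symmetric case \eqref{eq:2d:hex_symm} with $\alpha_2=\alpha_3=0$, we are in simple case~1 of \S\ref{sec:2d:simple_cases}: the atomistic strain-gradient remainder vanishes identically, so $\Ha_\mO=\Hc_\mO$, and by Proposition~\ref{th:2d:cb_hess} together with \eqref{eq:Ha_coeffs_hexsym} this common operator is the pure strain form $\<\Hc_\mO u,u\>=c\,\|Du\|_{\ell^2}^2$ with $c=2(\alpha_0+\alpha_1)$. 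Since $\|Du\|_{\ell^2}$ and $\|\D u\|_{L^2}$ are equivalent norms on $\Usz$, hypothesis~(i) ($\ca(\mO)=\gamma(\Ha_\mO)=0$) forces $c=0$, hence $\<\Hc_\mO u,u\>\equiv 0$.

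Substituting this into Lemma~\ref{th:2d:better_qnl_hessrep} leaves
\[
  \<\Hqnl_\mO u,u\> = 2\delta\,\<K_0 u,u\> + \sum_{\xi\in\L}\hat X_\xi(D^2 u(\xi)),
  \qquad \delta := \tilde c_1^{(1)}(\mO)-\tilde c_1^{(-1)}(\mO),
\]
with $\delta\neq 0$ by hypothesis~(ii) and, because the bulk remainder also vanishes at $\mF=\mO$, with each $\hat X_\xi$ supported in a bounded strip of rows around $\L^{(0)}$ and satisfying $|\hat X_\xi(D^2 u)|\le C\,|D^2 u(\xi)|^2$. Bounding the remainder from above then gives
\[
  \<\Hqnl_\mO u,u\>\;\le\; 2|\delta|\,\b\<\b(\sign(\delta)\,K_0 + \tfrac{C}{2|\delta|}\,S\b)u,u\b\>,
\]
with $S$ as in \eqref{eq:2d:defn_S}, harmlessly enlarged to the finitely many rows carrying the $\hat X_\xi$. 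I would then apply Lemma~\ref{th:2d:qnl_hess_instab} with $\kappa=C/(2|\delta|)$: its proof is symmetric under $B'(0)\mapsto-B'(0)$, so it equally produces indefiniteness of $-K_0+\kappa S$, and its scaling estimates are unaffected if $S$ is summed over a bounded number of rows. This yields a fixed $u_0\in\Usz$ with $\|Du_0\|_{\ell^2}=1$ and $\<(\sign(\delta)K_0+\kappa S)u_0,u_0\> < 0$, whence $\<\Hqnl_\mO u_0,u_0\> < 0$ and therefore $\cqnl(\mO)<0$.

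For the ``in particular'' assertion I would keep this same compactly supported $u_0$. Because it has bounded support, $\mG\mapsto\<\Hqnl_\mG u_0,u_0\>$ is a finite sum of products of continuous functions of $\mG$ (through $\tilde V_{\xi,ij}(\mG\Rg)$ and $\ddW(\mG)$), hence continuous; since it is negative at $\mG=\mO$ it stays negative for $|\mG|$ sufficiently small, and dividing by the fixed positive number $\|\D u_0\|_{L^2}^2$ gives $\cqnl(\mG)\le\<\Hqnl_\mG u_0,u_0\>/\|\D u_0\|_{L^2}^2 < 0$ there.

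The main obstacle is the bookkeeping underlying the two displayed reductions at $\mF=\mO$: checking carefully that the remainders $\hat X_\xi$ are genuinely confined to a bounded strip around the interface and controlled by $|D^2 u(\xi)|^2$, and that replacing $S$ in Lemma~\ref{th:2d:qnl_hess_instab} by a sum over this bounded collection of rows does not disturb the $N^{-3}$-versus-$N^{-2}$ scale separation that makes $K_\kappa$ indefinite. Once this is in place, the statement follows by directly assembling Proposition~\ref{th:2d:cb_hess} and Lemmas~\ref{th:2d:better_qnl_hessrep} and~\ref{th:2d:qnl_hess_instab}.
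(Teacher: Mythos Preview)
Your proposal is correct and follows essentially the same route as the paper: vanish $\Hc_\mO$ via the simple case $\alpha_2=\alpha_3=0$ together with assumption~(i), feed the result into Lemma~\ref{th:2d:better_qnl_hessrep}, absorb the strain-gradient remainder into $\kappa S$, and invoke Lemma~\ref{th:2d:qnl_hess_instab}. You are in fact more careful than the paper in two places: you handle both signs of $\delta$ explicitly (the paper leaves this implicit, relying on the remark that $\gamma(K_0)=\gamma(-K_0)$), and you spell out the continuity argument for the ``in particular'' clause. Your concern about enlarging $S$ to several rows is unnecessary here---in the simple case $X\equiv 0$, so by Proposition~\ref{th:2d:qnl_hess_straingrad} the bulk $\tilde X_\xi$ vanish and the proof of Lemma~\ref{th:2d:better_qnl_hessrep} only produces additional $\hat X_\xi$ terms at $\L^{(0)}$---but it does no harm.
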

\begin{proof}
  The symmetry assumptions and (i) imply that $\Ha_\mO = \Hc_{\mO} =
  0$. Therefore, applying \eqref{eq:2d:better_qnl_hessrep} we obtain
  that
  \begin{displaymath}
    \< \Hqnl_{\mO} u, u \> \leq 2 (\tilde{c}_1^{(1)} -
    \tilde{c}^{(-1)}_1) \< K_0 u, u \> + \kappa \< S u, u \>
  \end{displaymath}
  for some $\kappa > 0$. Lemma \ref{th:2d:qnl_hess_instab} implies
  that $\cqnl(\mO) < 0$.
\end{proof}

  \begin{remark}
    1.  In the above corollary, (i) is an assumptions on $V$, whereas
    (ii) is the assumption on an a/c scheme.  We showed in \S
    \ref{prop:univ_stab_2d:gen_grac} that (ii) is generically
    satisfied.

    2. Our numerical investigations (\S\S \ref{sec:stab_regions} and
    \ref{sec:num2d}) indicate that similar results hold for more
    general $V$ and $\mG$, i.e., not necessarily satisfying $\mG =
    \mO$ and the simplifying condition $\alpha_2 = \alpha_3 = 0$. It
    does not, however, appear straightforward to extend our analysis.
  \end{remark}

\subsection{Stabilising the 2D QNL Method}
\label{sec:stab2d}
To conclude our analysis of the 2D case, we explore the issue of
stabilisation. Let $S$ be given by \eqref{eq:2d:defn_S} then we define
the stabilised QNL energy functional
\begin{equation}
  \label{eq:defn_Estab:1}
  \Estab(y) := \Eqnl(u) + \kappa \< S u, u \>,
\end{equation}
for some $\kappa \geq 0$.

A consequence of Corollary \ref{th:2d:cor_instab} is that (under its
technical conditions), for any fixed $\kappa$, if $\gamma(\Ha_\mG) =
0$ then $\gamma(H^{\rm stab}_\mG) < 0$, that is, the critical
deformation $\mG$ can still not predicted with arbitrarily high
accuracy. However, there is some hope that the error can be controlled
in terms of $\kappa$. To that end, we first show that Lemma
\ref{th:2d:qnl_hess_instab} is in fact sharp.

\begin{theorem}
  \label{th:sharp_stab_result}
  Let $K_\kappa$ and $\lambda_\kappa$ be defined by
  \eqref{eq:2d:defn_Kkappa}, then there exist constants $c_1, c_2 > 0$
  such that
  \begin{equation}
    \label{eq:2d:sharp_stab_result}
    - \frac{c_1}{(\kappa+1)^2} \leq \lambda_\kappa \leq -
    \frac{c_2}{(\kappa+1)^2} \qquad \forall \kappa \geq 0.
  \end{equation}
\end{theorem}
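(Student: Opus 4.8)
The upper bound $\lambda_\kappa\le -c_2/(\kappa+1)^2$ is exactly Lemma \ref{th:2d:qnl_hess_instab}, so the plan is to establish the matching lower bound $\lambda_\kappa\ge -c_1/(\kappa+1)^2$ for all $\kappa\ge0$. The starting point is an elementary Cauchy--Schwarz estimate. Since, by the definition of $K_\kappa$ in \eqref{eq:2d:defn_Kkappa} and of $K_0$ in Lemma \ref{th:2d:better_qnl_hessrep}, $\langle K_0 u,u\rangle=\sum_{\xi\in\L^{(0)}}D_2D_1u(\xi)\,D_1u(\xi)$, for any $t>0$ we have
\[
\langle K_0 u,u\rangle \;\ge\; -\tfrac t2\sum_{\xi\in\L^{(0)}}|D_2D_1u(\xi)|^2 \;-\; \tfrac1{2t}\sum_{\xi\in\L^{(0)}}|D_1u(\xi)|^2 \;\ge\; -\tfrac t2\,\langle Su,u\rangle \;-\; \tfrac1{2t}\,\|D_1u\|_{\ell^2(\L^{(0)})}^2,
\]
because $|D_2D_1u(\xi)|^2$ is one of the entries of $|D^2u(\xi)|^2$ (see \eqref{eq:2d:defn_S}). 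Choosing $t=\kappa$ (for $\kappa>0$) gives $\langle K_\kappa u,u\rangle\ge \tfrac\kappa2\langle Su,u\rangle-\tfrac1{2\kappa}\|D_1u\|_{\ell^2(\L^{(0)})}^2$.

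The key ingredient, and the step I expect to be the main obstacle, is a discrete trace--interpolation inequality
\[
\|D_1u\|_{\ell^2(\L^{(0)})}^2 \;\le\; C\,\|Du\|_{\ell^2(\L)}^{4/3}\,\langle Su,u\rangle^{1/3}\qquad\forall\,u\in\Usz.
\]
Morally this is the lattice analogue of the continuum estimate obtained by combining the trace theorem $\|\nabla u|_{\{x_2=0\}}\|_{\dot H^{-1/2}(\mathbb R)}\lesssim\|\nabla u\|_{L^2(\mathbb R^2)}$, the bound $\|\partial_1(\nabla u)|_{\{x_2=0\}}\|_{L^2(\mathbb R)}\lesssim\langle Su,u\rangle^{1/2}$ (the horizontal second differences at the interface are part of $S$), and the interpolation identity $[\dot H^{-1/2},\dot H^1]_{1/3}=L^2$. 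To make it rigorous on the lattice I would apply a discrete Fourier transform in the interface-tangential variable, reducing the claim to a one-parameter family, indexed by the tangential frequency $\theta$, of one-dimensional trace/interpolation estimates on the ``columns'' $\ell^2(\mathbb Z)$, and then check that these hold with a constant uniform in $\theta$. Keeping track of the correct powers of the symbol $\phi(\theta)=4\sin^2(\theta/2)$ and proving the uniformity is the delicate point; the extremal configurations turn out to coincide, up to constants, with the separated-variables profiles $u(\xi)=\alpha_m\beta_n$ used in the proof of Lemma \ref{th:2d:qnl_hess_instab}, which is why the exponents come out sharp.

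Granting the interpolation inequality, the lower bound follows by optimisation. Substituting it into the estimate above and minimising the right-hand side, regarded as a function of the nonnegative quantity $\langle Su,u\rangle$, i.e.\ minimising $s\mapsto \tfrac\kappa2 s-\tfrac{C}{2\kappa}\|Du\|_{\ell^2}^{4/3}s^{1/3}$ over $s\ge0$ (a one-line application of Young's inequality), yields $\langle K_\kappa u,u\rangle\ge -C_\ast\kappa^{-2}\|Du\|_{\ell^2}^2$ for $\kappa>0$. This is useless for small $\kappa$, but there a crude Cauchy--Schwarz bound $|\langle K_0u,u\rangle|\le C_0\|Du\|_{\ell^2}^2$ (using $|D_2D_1u(\xi)|\le|D_1u(\xi+a_2)|+|D_1u(\xi)|$) together with $\kappa\langle Su,u\rangle\ge0$ gives $\langle K_\kappa u,u\rangle\ge -C_0\|Du\|_{\ell^2}^2$ for all $\kappa\ge0$. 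Treating the two regimes $\kappa\le1$ and $\kappa\ge1$ separately and adjusting the constant gives $\lambda_\kappa\ge -c_1/(\kappa+1)^2$, which completes the proof.
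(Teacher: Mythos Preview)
Your proposal is correct and follows essentially the same route as the paper: Cauchy--Schwarz on $K_0$, a Fourier-in-$\xi_1$ trace/interpolation inequality linking $\|D_1u\|_{\ell^2(\L^{(0)})}$, $\|Du\|_{\ell^2(\L)}$ (via the discrete $\dot H^1(\R^2)\hookrightarrow\dot H^{1/2}(\R)$ embedding) and the interface second differences, and a separate crude bound for $\kappa\le 1$. The only cosmetic difference is that the paper writes the key inequality in additive (Young) form, $\|D_1u\|_{\ell^2(\L^{(0)})}^2\le C\big(\kappa^2\|D_1^2 u\|_{\ell^2(\L^{(0)})}^2+\kappa^{-1}\|Du\|_{\ell^2(\L)}^2\big)$, which on the Fourier side is just $k^2\le\max(\kappa^2k^4,\kappa^{-1}|k|)$, whereas you state the equivalent product form obtained by H\"older, $k^2=|k|^{2/3}(k^4)^{1/3}$, and then optimise.
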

\begin{proof}
  The upper bound has already been established in Lemma
  \ref{th:2d:qnl_hess_instab}, hence we only have to show that it is
  sharp. For $\kappa \leq 1$, the lower bound is obvious, hence we
  assume that $\kappa > 1$.

  We first (crudely) estimate
  \begin{align*}
    \< K_\kappa u, u \> &\geq \sum_{\xi \in \L^{(0)}} \B( D_2 D_1
    u(\xi) D_1 u(\xi) + \kappa \sum_{i = 1}^6 |D_i D_1 u(\xi)|^2
    \B)  \\
    &\geq \sum_{\xi \in \L^{(0)}} \B( - \smfrac{1}{4 \kappa} |D_1
    u(\xi)|^2 + \kappa |D_1^2 u(\xi)|^2 \B).
  \end{align*}
  
  If we can prove the trace inequality
  \begin{equation}
    \label{eq:2d:trace_ineq}
    \| D_1 u \|_{\ell^2(\L^{(0)})}^2 \leq C_1 \B( \kappa^2 \| D D_1 u
    \|_{\ell^2(\L^{(0)})}^2 + \kappa^{-1} \| Du \|_{\ell^2(\L)}^2 \B),
  \end{equation}
  for some constant $C_1$, which can equivalently be rewritten as
  \begin{displaymath}
    - \smfrac{1}{4\kappa} \b\|D_1 u \b\|_{\ell^2(\L^{(0)})}^2 + \kappa
    \b\| D D_1 u \b\|_{\ell^2(\L^{(0)})}^2 \geq - \smfrac{c_1}{\kappa^2}
    \| D u \|_{\ell^2(\L)}^2,
  \end{displaymath}
  then the stated result follows.

  {\it Proof of \eqref{eq:2d:trace_ineq}: } It turns out that
  \eqref{eq:2d:trace_ineq} is a consequence of the embedding
  $\dot{H}^1(\R^2) \to \dot{H}^{1/2}(\R)$. To make this precise we
  resort to Fourier analysis. Let
  \begin{displaymath}
    \hat{u}(k) := \sum_{\xi_1 \in \Z} u(\xi_1, 0) e^{i k \xi_1},
  \end{displaymath}
  then $\hat{u}$ is a periodic smooth function on $(-\pi, \pi)$ and
  the following bounds hold:
  \begin{align}
    \notag
    \| D_1 u \|_{\ell^2(\L^{(0)})}^2 &\approx \int_{-\pi}^\pi |k|^2
    |\hat{u}|^2 \,{\rm d}k, \\
    \notag
    \| D_1^2 u \|_{\ell^2(\L^{(0)})}^2 &\approx \int_{-\pi}^\pi |k|^4
    |\hat{u}|^2 \,{\rm d}k,  \qquad \text{ and } \\    
    \label{eq:trace_ineq_fourier}
    \| D u \|_{\ell^2(\L)}^2 &\gtrsim \int_{-\pi}^\pi |k| |\hat{u}|^2 \,{\rm d}k.
  \end{align}
  The first two bounds are completely standard. The bound
  \eqref{eq:trace_ineq_fourier} is a discrete variant of a standard
  trace inequality, and is established below.
	
  We thus deduce that, to prove \eqref{eq:2d:trace_ineq} it is
  sufficient to show that there exists $C_1'$ such that
  \begin{displaymath}
    k^2 \leq C_1' \b( \kappa^2 k^4 + \kappa^{-1} |k| \b) \qquad
    \forall k \in [-\pi, \pi].
  \end{displaymath}
  But, in fact, it is easy to see that $k^2 \leq \max(\kappa^2 k^4,
  \kappa^{-1} |k|)$, and hence \eqref{eq:2d:trace_ineq} follows.
  
\delfinal{  
  {\it Proof of \eqref{eq:trace_ineq_fourier}}.  It is sufficient to
  establish a similar bound for functions $u\in\Usz(\Z^2)$.  Introduce
  the full Fourier transform $\hhat{u}(k_1,k_2) = \hhat{u}(k) :=
  \sum_{\xi \in \Z^2} u(\xi) e^{i k\cdot \xi}$ and, using the
  Cauchy-Schwarz inequality, estimate
  \begin{align*}
  \int_{-\pi}^\pi |k_1| |\hat{u}(k_1)|^2 \,{\rm d}k_1
  =~&
  \int_{-\pi}^\pi |k_1| \bigg(\frac{1}{2\pi} \int_{-\pi}^\pi \hhat{u}(k_1,k_2) {\rm d}k_2\bigg)^2 \,{\rm d}k_1
  \\ \lesssim ~&
  \int_{-\pi}^\pi \bigg(\int_{-\pi}^\pi |k_1| (k_1^2+k_2^2)^{-1} {\rm d}k_2\bigg) \bigg(\int_{-\pi}^\pi (k_1^2+k_2^2) (\hhat{u}(k_1,k_2))^2 {\rm d}k_2\bigg) \,{\rm d}k_1
  \\ = ~&
  \int_{-\pi}^\pi 2 \arctan\b(\smfrac{\pi}{|k_1|}\b) \bigg(\int_{-\pi}^\pi (k_1^2+k_2^2) (\hhat{u}(k_1,k_2))^2 {\rm d}k_2\bigg) \,{\rm d}k_1
  \\ \lesssim ~&
  \|Du\|_{\ell^2(\Z^2)}^2.\qedhere
  \end{align*}
}
\end{proof}

We can now refine the discussion at the beginning of the section to
obtain the following result.

\begin{corollary}
  \label{th:2d:cor_optim_stab}
  Let $V$ have hexagonal symmetry \eqref{eq:2d:hex_symm}, $V_{i,i+2} =
  V_{i,i+3} \equiv 0$, and $\tilde{c}_1^{(1)}-\tilde{c}_1^{(-1)}\ne
  0$; then there exists constants $c_1, c_2 > 0$ such that
  \begin{displaymath}
    \gamma(\Ha_\mO) - \frac{c_1}{\kappa^2} \leq \gamma(\Hqnl_\mO +
    \kappa S) \leq
    \gamma(\Ha_\mO) - \frac{c_2}{\kappa^2}.
  \end{displaymath}
\end{corollary}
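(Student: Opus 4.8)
The plan is to insert the representation of $\Hqnl_\mO$ from Lemma~\ref{th:2d:better_qnl_hessrep} into the sharp two-sided eigenvalue estimate of Theorem~\ref{th:sharp_stab_result}, after rescaling the stabilisation parameter. First I record the simplifications available in this regime. Because $V_{i,i+2}=V_{i,i+3}\equiv0$ (i.e.\ $\alpha_2=\alpha_3=0$), the first simple case of \S\ref{sec:2d:simple_cases} gives $X\equiv0$ and hence $\Ha_\mO=\Hc_\mO$; moreover, in the hexagonally symmetric case $\ddW(\mO)$ is a scalar multiple of the identity (the coefficients $c_j$ of Proposition~\ref{th:2d:cb_hess} coincide), so $\<\Hc_\mO u,u\>=\gamma(\Ha_\mO)\,\|\D u\|_{L^2}^2$, and there is a fixed geometric constant $\mu>0$ with $\|Du\|_{\ell^2}^2=\mu\,\|\D u\|_{L^2}^2$. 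Set $\delta:=\tilde c_1^{(1)}-\tilde c_1^{(-1)}\ne0$. Both parts of Theorem~\ref{th:sharp_stab_result} — the test-function upper bound of Lemma~\ref{th:2d:qnl_hess_instab} and the trace-inequality lower bound — are insensitive to the sign of $K_0$, so they apply verbatim with $K_0$ replaced by $-K_0$; thus we may assume $\delta>0$.

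Next I would apply Lemma~\ref{th:2d:better_qnl_hessrep}. In the present regime $\tilde X_\xi=X=0$ for $\xi_2\ne0$, and the summation-by-parts step in the proof of that lemma rewrites the only non-sum-of-squares leftover as $\pm\sum_{\xi\in\L^{(0)}}D_5D_2u(\xi)\,D_4D_1u(\xi)$, whose two factors are both entries of $D^2u(\xi)$ with $\xi\in\L^{(0)}$. Consequently every contribution to $\<\Hqnl_\mO u,u\>$ other than $\<\Hc_\mO u,u\>+2\delta\<K_0u,u\>$ is a quadratic form in the array $D^2u(\xi)$ supported on $\L^{(0)}$, and is therefore two-sidedly dominated by $\<Su,u\>$: there is $C_0\ge0$ with
\begin{displaymath}
  \b| \<(\Hqnl_\mO+\kappa S)u,u\> - \gamma(\Ha_\mO)\|\D u\|_{L^2}^2 - 2\delta\<K_0u,u\> - \kappa\<Su,u\> \b| \le C_0\<Su,u\> .
\end{displaymath}
Normalising $\|\D u\|_{L^2}=1$ and taking the infimum over $u\in\Usz$, this places $\gamma(\Hqnl_\mO+\kappa S)$ between $\gamma(\Ha_\mO)+\inf_{\|\D u\|_{L^2}=1}\b[2\delta\<K_0u,u\>+(\kappa-C_0)\<Su,u\>\b]$ and the same quantity with $\kappa+C_0$ in place of $\kappa-C_0$.

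It then remains to estimate $\inf_{\|\D u\|_{L^2}=1}\b[2\delta\<K_0u,u\>+t\<Su,u\>\b]$ for $t\ge0$. Here $2\delta\<K_0u,u\>+t\<Su,u\>=2\delta\<K_{t/(2\delta)}u,u\>$, and converting the normalisation gives $\inf_{\|\D u\|_{L^2}=1}\<K_su,u\>=\mu\,\lambda_s$; by Theorem~\ref{th:sharp_stab_result} this infimum is $\asymp-(1+s)^{-2}$. Substituting $s=(\kappa\pm C_0)/(2\delta)$ and using $1+s=(2\delta+\kappa\pm C_0)/(2\delta)\asymp 1+\kappa$ uniformly for $\kappa\ge\max(0,C_0)$, one obtains
\begin{displaymath}
  \gamma(\Ha_\mO)-\frac{c_1}{(1+\kappa)^2}\le\gamma(\Hqnl_\mO+\kappa S)\le\gamma(\Ha_\mO)-\frac{c_2}{(1+\kappa)^2}
\end{displaymath}
for such $\kappa$; on the remaining bounded $\kappa$-range one invokes the monotonicity $\gamma(\Hqnl_\mO+\kappa S)\ge\gamma(\Hqnl_\mO)$, which by Corollary~\ref{th:2d:cor_instab} is a fixed strictly negative number, and enlarges $c_1,c_2$ accordingly. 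In particular the estimate holds in the form stated for $\kappa\ge1$.

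Once Theorem~\ref{th:sharp_stab_result} is in hand the argument is essentially bookkeeping, and the one genuinely delicate step is the localisation used above: that, when $\alpha_2=\alpha_3=0$, the summation-by-parts in Lemma~\ref{th:2d:better_qnl_hessrep} leaves the whole remainder beyond $2\delta K_0$ as a quadratic form in the $D^2u(\xi)$ with $\xi\in\L^{(0)}$ only, so that it is squeezed between $\pm C_0\<Su,u\>$. (Without that hypothesis the atomistic strain-gradient terms would spill into the bulk and one would have to compare carefully against $\Ha_\mO$ rather than $\Hc_\mO$; this is precisely why the corollary assumes $V_{i,i+2}=V_{i,i+3}\equiv0$.)
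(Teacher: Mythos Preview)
Your proof is correct and follows exactly the approach the paper intends: the paper's proof is the single sentence ``The result is an immediate consequence of Theorem~\ref{th:sharp_stab_result}'', and you have supplied the bookkeeping that makes this immediate --- namely that under the hypotheses $\Ha_\mO=\Hc_\mO=\gamma(\Ha_\mO)\|\D u\|_{L^2}^2$, the remainder in Lemma~\ref{th:2d:better_qnl_hessrep} is supported on $\L^{(0)}$ and hence squeezed between $\pm C_0\<Su,u\>$, after which a rescaling reduces everything to $\lambda_{(\kappa\pm C_0)/(2\delta)}$.

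One small point: your appeal to Corollary~\ref{th:2d:cor_instab} for the bounded-$\kappa$ range is not quite licit, since that corollary carries the extra hypothesis $\gamma(\Ha_\mO)=0$ which is not assumed here. You do not actually need it: for the lower bound on small $\kappa$ it suffices that $\gamma(\Hqnl_\mO)>-\infty$, which follows directly from the original representation \eqref{eq:2d:qnl_hessian_defn} (each site contributes at most $C|Du(\xi)|^2$); and the upper bound already works for all $\kappa\ge0$ via the $\kappa+C_0$ branch. With that replacement your argument is complete.
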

\begin{proof}
  The result is an immediate consequence of Theorem
  \ref{th:sharp_stab_result}.
\end{proof}

To explain the relevance of Corollary \ref{th:2d:cor_optim_stab}
consider the setting of \S~\ref{sec:err_crit_strains} and suppose, for
the sake of argument, that the result holds at the critical strain,
\begin{displaymath}
  \gamma(\Ha_{\mG(t_*)}) - \frac{c_1}{\kappa^2} \leq \gamma(\Hqnl_{\mG(t_*)} +
  \kappa S) \leq
  \gamma(\Ha_{\mG(t_*)}) - \frac{c_2}{\kappa^2}.
\end{displaymath}
It is then easy to see that the error in the critical strain will be
of the order
\begin{equation}
  \label{eq:2d:err_crit_strain}
  |t^\kappa_* - t_*| \approx \frac{1}{\kappa^2}.
\end{equation}

Therefore, if we wish to admit at most an $O(\eps)$ error in the
critical strain, then we must accordingly choose $\kappa =
O(\eps^{-1/2})$. Unfortunately, this causes a larger consistency error
of the stabilised QNL method, which may again cause a feedback to
cause a larger error in the critical strain. This effect requires
further investigation in future work that would also need to
incorporate inhomogeneous deformations.


\section{Numerical Tests} 

\subsection{Regions of stability}
\label{sec:stab_regions}
We have analytically established the instability and stabilization
results for the case when only nearest-neighbour bonds interact (i.e.,
assuming $\alpha_2=\alpha_3=0$).  In this subsection we will study
these issues in the general hexagonally symmetric case
\eqref{eq:2d:hex_symm}, admitting $\alpha_2, \alpha_3 \neq 0$.  The
above analytic results cannot be readily extended to this case since
$\Ha_\mF\ne \Hc_\mF$, hence will use a semi-numeric approach.

We start with a characterization of the stability of $\Ha_\mF$.

\begin{lemma}
$\Ha_\mF$ is stable if and only if
\begin{align*}
\beta_1 :=~& \alpha_0+\alpha_1-\alpha_2-\alpha_3 > 0
, \\
\beta_2 :=~& \alpha_0+\alpha_1+\alpha_2+\alpha_3 > 0
, \quad\text{and} \\
\beta_3 :=~& 2\alpha_0+2\alpha_1+4\alpha_2+\alpha_3 > 0
. \\
\end{align*}
\delfinal{\begin{proof}
 We first notice that if $\beta_1 \leq 0$ then $\gamma(\Ha_\mF) \leq
  \gamma(\Hc_\mF) = \frac{4}{\sqrt{3}} \beta_1 \leq 0$
  (cf.~\eqref{eq:Ha_coeffs_hexsym}).  The necessity of the other two
  conditions follows from the identities
\begin{align*}
\Ha_\mF u^{(2)} = 16 \beta_2 u^{(2)}
\quad\text{where}&\quad
u^{(2)}(\xi_1,\xi_2) := \exp\b(\smfrac{2\pi \i}{\sqrt{3}}\xi_2\b)
,\quad\text{and}
\\
\Ha_\mF u^{(3)} = 9 \beta_3 u^{(3)}
\quad\text{where}&\quad
u^{(3)}(\xi_1,\xi_2) := \exp\b(\smfrac{4\pi \i}{3}\xi_1\b),
\end{align*}
which can be verified by a direct calculation.

To prove sufficiency, we use the following representation:
\begingroup \allowdisplaybreaks
\begin{align*}
\Ha_\mF =~& \beta_1 B_1 + \beta_2 B_2 + \beta_3 B_3,
\quad\text{where} \\
\<B_1 u, u\> =~& \sum_{\xi \in \L} \sum_{i = 1}^6
	\b(
		|D_i u(\xi)|^2-
			\smfrac16 |D_{i+2}D_i u(\xi)|^2 - \smfrac16 |D_{i+3}D_i u(\xi)|
	\b)
	\\=~&
	\smfrac{1}{12} \sum_{\xi \in \L} \sum_{i = 1}^6
		|(D_{i+1}-D_{i-1}+D_{i+2}-D_{i-2}) u(\xi)|^2\\
\<B_2 u, u\> =~&
	\sum_{\xi \in \L} \sum_{i = 1}^6
	\b(
			\smfrac12 |D_{i+3}D_i u(\xi)|^2 - \smfrac12 |D_{i+2}D_i u(\xi)|^2
		\b) \\
	=~&
	\smfrac14 \sum_{\xi \in \L} \sum_{i = 1}^6
		|D_i (D_{i+2}-D_{i+4}) u(\xi)|^2
\\
\<B_3 u, u\> =~&
	\sum_{\xi \in \L} \sum_{i = 1}^6
	\b(
		\smfrac23 |D_{i+2}D_i u(\xi)|^2 - \smfrac13 |D_{i+3}D_i u(\xi)|^2
	\b) \\
	=~&
	\smfrac23 \sum_{\xi \in \L} \bigg|{\sum_{i=1}^6} (-1)^i D_i u(\xi)\bigg|^2
,
\end{align*}%
\endgroup
which can also be verified by a direct calculation.

Assume that $\beta_1$, $\beta_2$, $\beta_3$ are all positive.  Then
for a sufficiently small $\eps>0$,
\[
\Ha_\mF
=
2 \eps \|D\cdot\|_{\ell^2}^2 + (\beta_1-\eps) B_1 + (\beta_2-\eps) B_2 + (\beta_3-\eps) B_3
\]
is a sum of four positive semidefinite operators with $\gamma(\Ha_\mF)
\geq \frac{4}{\sqrt{3}} \eps >0$.
\end{proof}}
\end{lemma}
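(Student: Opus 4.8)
The plan is to prove the two implications separately: necessity of the three inequalities by exhibiting explicit (marginally) destabilising configurations, and sufficiency by decomposing $\Ha_\mF$ into three manifestly positive semidefinite pieces. For necessity of $\beta_1>0$ I would invoke the continuum comparison: by the 2D analogue of Lemma~\ref{th:cc_geq_ca} recorded in \S\ref{sec:2d:def_qnl}, $\gamma(\Ha_\mF)\le\gamma(\Hc_\mF)$, and combining Proposition~\ref{th:2d:cb_hess} with the explicit value $c=2\beta_1$ from \eqref{eq:Ha_coeffs_hexsym} shows that $\gamma(\Hc_\mF)$ is a fixed positive multiple of $\beta_1$; hence $\beta_1\le 0$ forces $\gamma(\Ha_\mF)\le 0$.

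For the necessity of $\beta_2>0$ and $\beta_3>0$ I would use the bounded lattice functions $u^{(2)}(\xi):=\exp\b(\smfrac{2\pi\i}{\sqrt3}\xi_2\b)$ and $u^{(3)}(\xi):=\exp\b(\smfrac{4\pi\i}{3}\xi_1\b)$ and check, by substituting into \eqref{eq:2d:HaF_defn} and using the hexagonal-symmetry identities \eqref{eq:2d:hex_symm}, that — viewing $\Ha_\mF$ as the translation-invariant (hence Fourier-diagonal) operator it is — one has $\Ha_\mF u^{(2)}=16\beta_2\,u^{(2)}$ and $\Ha_\mF u^{(3)}=9\beta_3\,u^{(3)}$. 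Since these functions are not in $\Usz$, I would pass to admissible test functions by the standard quasimode truncation $u_N:=\chi(\cdot/N)\,u^{(k)}$ with $\chi\in C_0^\infty$, $\chi\equiv1$ near the origin; as $N\to\infty$ the Rayleigh quotient $\<\Ha_\mF u_N,u_N\>/\|\D u_N\|_{L^2}^2$ converges to a positive multiple of $\beta_2$ (resp.\ $\beta_3$), so $\beta_2\le0$ or $\beta_3\le0$ again forces $\gamma(\Ha_\mF)\le0$. (Equivalently one may argue via the Bloch symbol of $\Ha_\mF$, which takes the values $16\beta_2$ and $9\beta_3$ at two high-symmetry wave vectors, so nonnegativity of the symbol — necessary for stability — requires $\beta_2,\beta_3\ge0$.)

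For sufficiency I would establish the representation $\Ha_\mF=\beta_1 B_1+\beta_2 B_2+\beta_3 B_3$, with $B_1,B_2,B_3$ the operators written in the statement, each visibly positive semidefinite because it is displayed as a sum of squares of first and second finite differences of $u$. This follows from a direct computation starting from \eqref{eq:2d:HaF_defn}: reduce everything to the four parameters $\alpha_0,\dots,\alpha_3$ via \eqref{eq:2d:hex_symm}, then convert the products $D_iu\,D_ju$ into squares using the identities \eqref{eq:RULE-1}--\eqref{eq:RULE-3} exactly as in the proof of Proposition~\ref{th:2d:atm_hessrep}, and match coefficients. The point of the chosen normalisation of the $B_i$ is the identity
\[
  B_1+B_2+B_3 = 2\,\|D\cdot\|_{\ell^2}^2,
\]
obtained by adding the three expressions and noting that all second-difference contributions cancel. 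Granting this, if $\beta_1,\beta_2,\beta_3>0$ then for any $\eps\in(0,\min_i\beta_i]$,
\[
  \Ha_\mF = 2\eps\,\|D\cdot\|_{\ell^2}^2 + (\beta_1-\eps)B_1+(\beta_2-\eps)B_2+(\beta_3-\eps)B_3 \ \ge\ 2\eps\,\|D\cdot\|_{\ell^2}^2,
\]
and since $\|\D u\|_{L^2}^2\le C\|Du\|_{\ell^2}^2$ on $\Usz$ (each triangle's gradient is controlled by two of its edge differences), this gives $\gamma(\Ha_\mF)\ge c\,\eps>0$ for an explicit $c>0$.

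The main obstacle is the sufficiency step — specifically, discovering the three positive semidefinite building blocks $B_1,B_2,B_3$ and verifying both that $\Ha_\mF$ decomposes along them and that they sum to $2\|D\cdot\|_{\ell^2}^2$. Once the correct $B_i$ are identified the verification is a finite, if lengthy, algebraic computation with \eqref{eq:RULE-1}--\eqref{eq:RULE-3}; the necessity direction is routine apart from the standard truncation needed to turn the plane waves into compactly supported test functions.
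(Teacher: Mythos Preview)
Your proposal is correct and follows essentially the same approach as the paper: necessity via the Cauchy--Born comparison for $\beta_1$ and the two explicit plane-wave eigenfunctions for $\beta_2,\beta_3$, sufficiency via the decomposition $\Ha_\mF=\beta_1 B_1+\beta_2 B_2+\beta_3 B_3$ together with the identity $B_1+B_2+B_3=2\|D\cdot\|_{\ell^2}^2$. You even make explicit two points the paper leaves implicit --- the truncation needed to turn the plane waves into admissible test functions in $\Usz$, and the summation identity for the $B_i$ that underlies the final $\eps$-splitting.
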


The above lemma states that the region of stability of $\Ha_\mF$ is
the first octant of the three-dimensional space of parameters
$(\beta_1,\beta_2,\beta_3)$.  We will thus study the extent to which
different a/c methods reproduce this exact stability region.  For the
ease of visualization, we restrict ourselves to a hyperplane
$\beta_1=\beta_3$ and map the stability region into a triangle
\[
\{(x,y) : x>0, y>0, x+2 y<1\}
\]
by letting $\beta_1 = \beta_3 = y/(1-x-2y)$ and $\beta_2 = x/(1-x-2y)$.

We compute the boundary of the stability region semi-analytically in the following way.
First, due to translational symmetry in $\xi_1$, it is sufficient to (formally) consider the test functions of the form $u(\xi_1,\xi_2) = e^{\i \xi_1 k_1} \bar{u}(\xi_2)$ where $k_1\in(-\pi,\pi)$ and $\bar{u}\in \Usz(\Z)$.\footnote{To rigorously justify this step, one would need to introduce a cut-off to these test functions to ensure that they belong to $\Usz(\L)$.}
This reduces the problem to testing for positive definiteness of five-diagonal symmetric operators depending on $k_1\in(-\pi,\pi)$.
Because the operator coefficients on different diagonals for $\xi_2<-1$ and for $\xi_2>1$ are constant, these operators can be inverted analytically.
Hence, we used {\it Mathematica} to analytically check whether there are negative eigenvalues of these operators and used a numerical procedure of minimizing the smallest eigenvalue over $k_1\in(-\pi,\pi)$.

\begin{figure}
  \begin{center}
    \hspace{-4mm} 
    \includegraphics{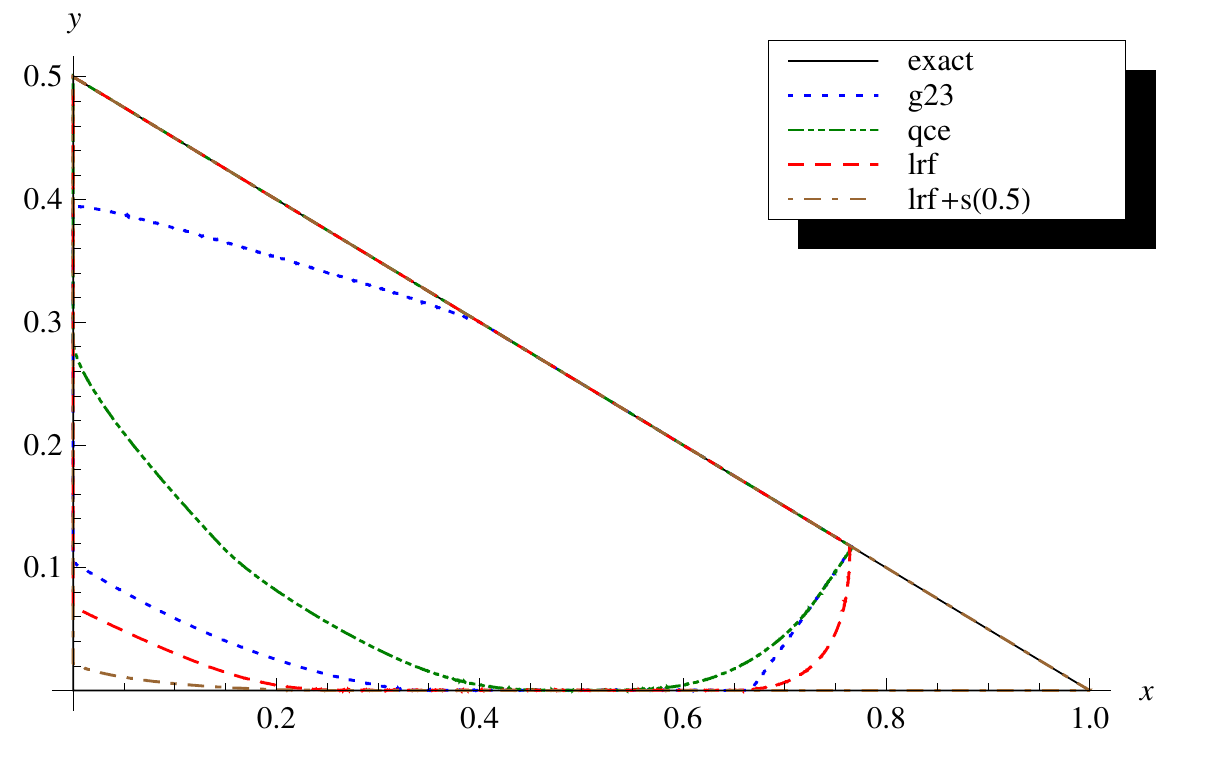}
    \hspace{-4mm}

    \caption{ \label{fig:stab_regions} Stability regions the hexagonally symmetric case, as described in \S~\ref{sec:stab_regions}.
    The exact (atomistic) stability region is the triangle and the stability regions of the a/c methods are proper subsets of it.
    The last method, lrf+s(0.5), is the stabilized coupling \eqref{eq:stab_reg:slrf} with $\kappa=0.5$.
    }
  \end{center}
\end{figure}

The regions of stability of different a/c methods are plotted in Figure \ref{fig:stab_regions}.
We observe that none of the methods reproduce the exact stability region, which is consistent with the results in the case $\alpha_2=\alpha_3=0$ (cf.~Corollary \ref{th:2d:cor_instab}).
Also, we see that the stabilized local reflection method
\begin{equation}\label{eq:stab_reg:slrf}
\<H_\mF^{\rm lrf+s(\kappa)} u, u\> := 
\<H_\mF^{\lref} u, u\> + \kappa (|\alpha_0|+|\alpha_1|+|\alpha_2|+|\alpha_3|) \,\smfrac16 \sum_{i=1}^6 |D_i D_{i+2} u|^2
\end{equation}
with $\kappa=0.5$ has an improved (but not exact) stability region.

\subsection{Critical eigenmodes}
\label{sec:num2d}
We conclude our investigations with some further numerical tests,
which aim to give a preliminary assessment of the effect of the
stability error on practical computations. Our experiments can only be
considered preliminary since we only consider a limited class of
interactions and, due to the significant computational cost involved,
we do not include extensive tests on domain size dependence.



\subsubsection{Stability gap}
In these experiments we admit vectorial deformations $y : \L \to
\R^2$, but otherwise use the same structure of atomistic and QNL
models. The potential used in our numerical experiments is a modified
EAM potential,
\begin{align*}
  V(g) &:= \sum_{\rho \in \Rg} \phi(|g_\rho|) + G\bg( \sum_{\rho\in
    \Rg} \psi(|g_\rho|) \bg) + D \sum_{j=1}^6 (r_j \cdot r_{j+1}-1/2)^2, \quad \text{where}  \\
  \phi(s) &:= e^{-2 A (s-1)} - 2 e^{-A (s - 1)},  \quad 
  \psi(s) := e^{- B s},  \quad \text{and} \quad 
  G(s) := C \b( (s - s_0)^2 + (s - s_0)^4 \b).
\end{align*}
Throughout, we fix the parameters $A = 3, B = 3, s_0 = 6 e^{-0.95 B}$,
but vary $C$ and $D$ between experiments.

Instead of a half-space, we perform our calculations in a hexagonal
domain with sidelength 18 atomic spacings and Dirichlet boundary
conditions. The atomistic region is a concentric hexagon with
sidelength 6 atomic spacings. We consider only the GRAC-2/3 method,
which is the only force-consistent method that we know of for this
setup.

Applying uniform expansion $\mF(t) = t \mI$ as load, we obtained the
results shown in Figure~\ref{fig:ex[1][05]} for parameters $C = 1, D =
-0.5$ and in Figure~\ref{fig:ex[1][0]} for parameters $C = 1, D = 0$.

In Figure~\ref{fig:ex[1][05]}(a) we observe a small but clear
gap in the stability constants where they cross zero. Realistically,
given the smallness of the gap, we must question whether it is
genuine or a numerical error such as a domain size effect. The plots
in Figure~\ref{fig:ex[1][05]}(b, c) suggest that the gap is genuine
since the unstable QNL eigenmode is concentrated on the interface, and
therefore of a different ``type'' than the unstable eigenmode of the
atomistic model.

Interestingly, in Figure~\ref{fig:ex[1][0]}, we still observe the same
characteristic difference in the eigenmodes, but the stability gap is
essentially absent. We can only conjecture that, analytically, a gap
must be present, but numerically it is too small to detect
reliably. And indeed, this means that it may be of little practical
relevance.

The two examples we have shown are prototypical for the entire
parameter range $C \in [-1, 1]$ and $D \in [-1, 1]$ that we
tested. Given how small the stability errors seem to be in practice
(at least in these experiments), this raises the question whether one
can quantify them, instead of trying to eradicate them completely.

\begin{figure}
  \begin{center}
    \hspace{-4mm} 
    \includegraphics[height=8.9cm]{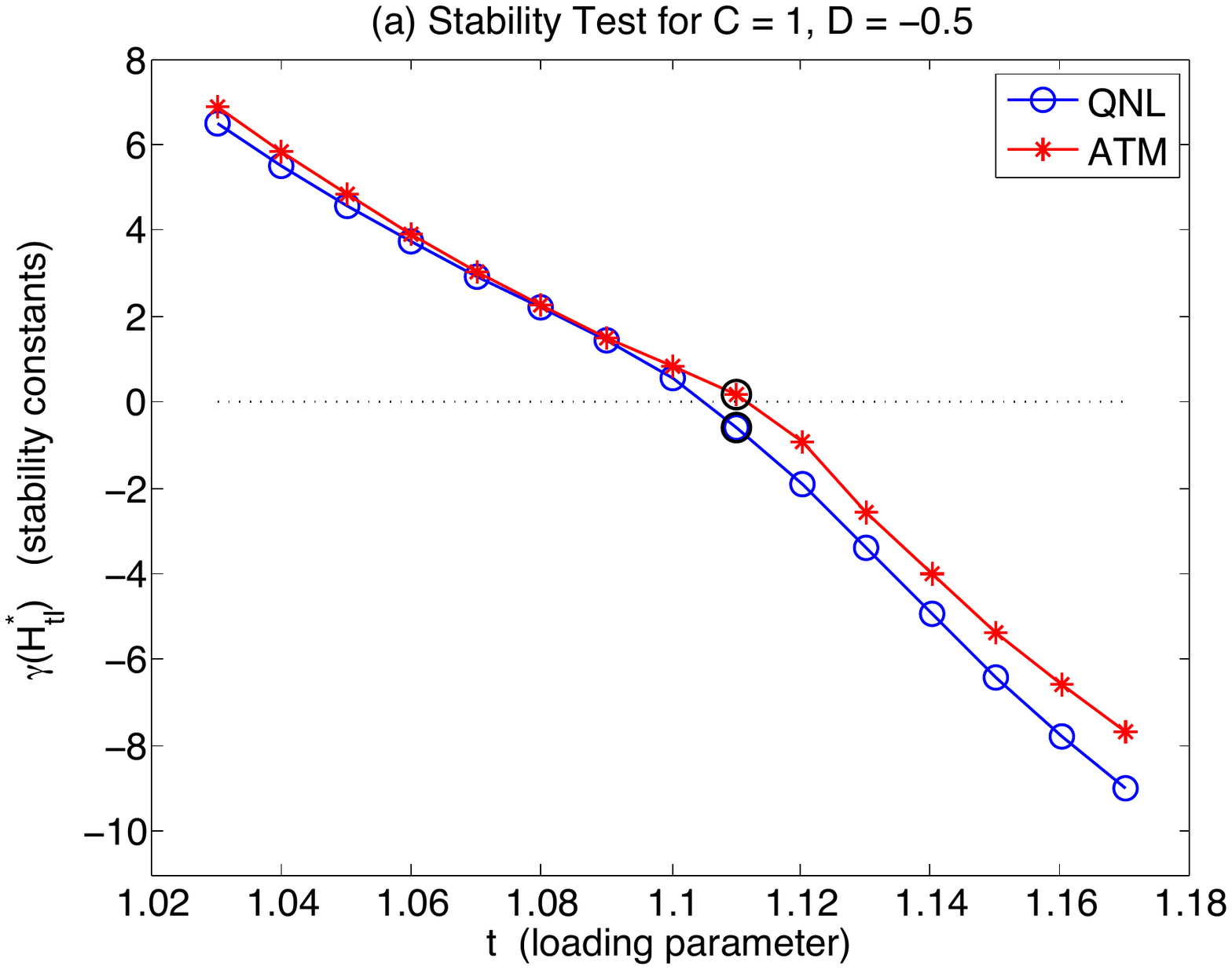}
    \hspace{-4mm}
    \includegraphics[height=8.9cm]{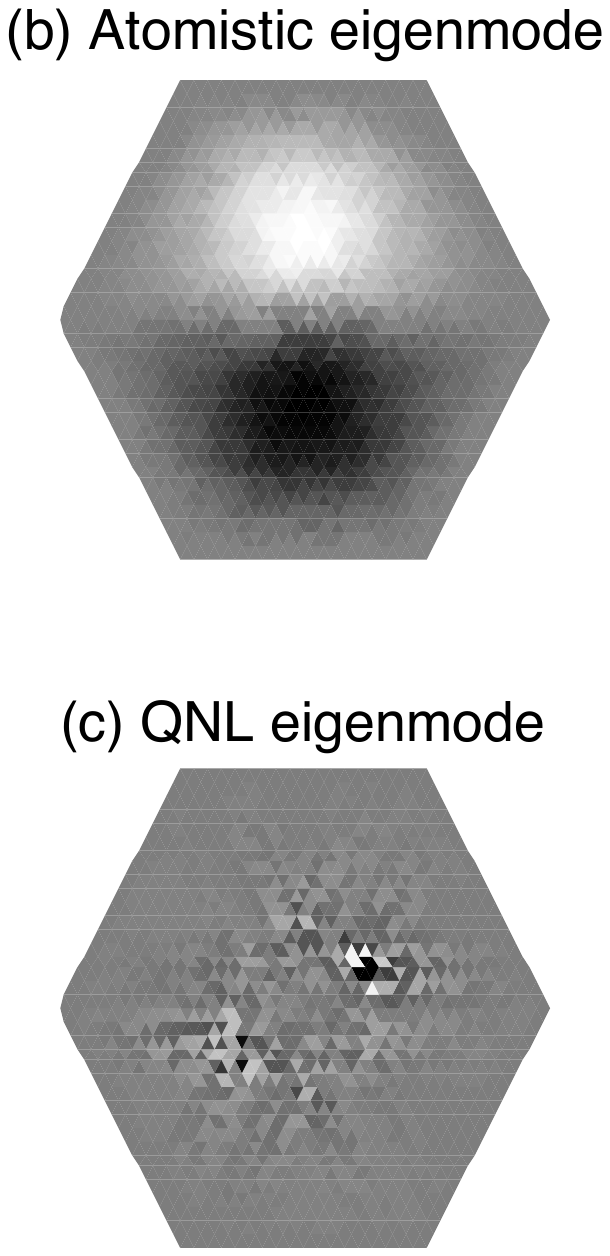}
    \hspace{-4mm} 
    \caption{ \label{fig:ex[1][05]} Stability test for $C = 1, D =
      -0.5$, as described in \S~\ref{sec:num2d}. The black circles
      indicate which eigenmodes ($u_1$-component) are plotted in (b,
      c).}
  \end{center}
\end{figure}

\begin{figure}
  \begin{center}
    \includegraphics[height=9cm]{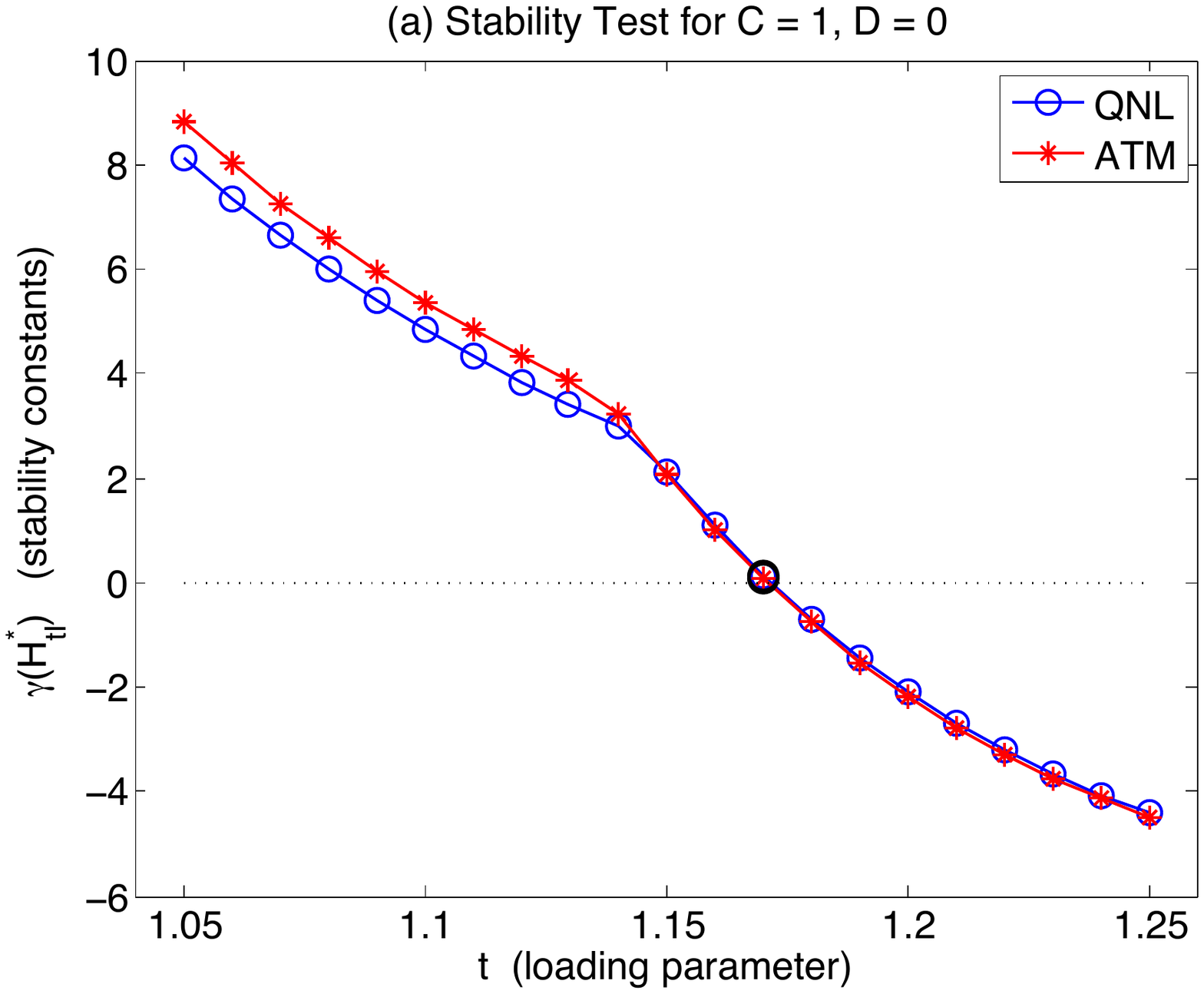}
    \includegraphics[height=9cm]{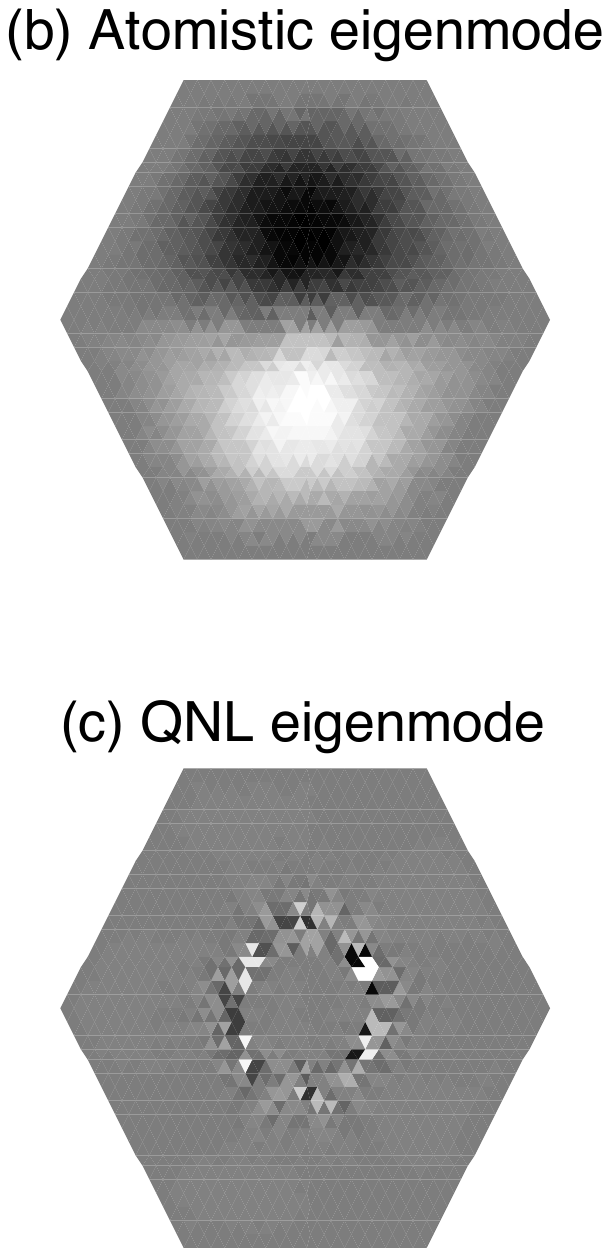}
    \caption{ \label{fig:ex[1][0]} Stability test for $C = 1, D = 0$,
      as described in \S~\ref{sec:num2d}. The black circles indicate
      which eigenmodes ($u_1$-copmponent) are plotted in (b, c).}
  \end{center}
\end{figure}

\subsubsection{Stabilisation}
For the parameters $C = 1, D = -0.5$, where we observed a visible
stability gap in Figure \ref{fig:ex[1][05]}, we now consider the
stabilised GRAC-2/3 scheme \eqref{eq:defn_Estab:1} with $S$ given by
\eqref{eq:2d:defn_S} and $\kappa \geq 0$. Repeating the numerical
experiment of the previous section we obtain the results shown in
Figure~\ref{fig:exstab[0.1]} for $\kappa = 0.1$ and in
Figure~\ref{fig:exstab[1]} for $\kappa = 1$.

In both experiments we observe a much smaller stability gap (for
$\kappa = 1$ no gap is visible with the plain eye), and this is
accompanied by a marked change in the qualitative behaviour of the
critical eigenmode. In both cases, the stabilisation has changed the
interface supported eigenmode into a bulk eigenmode, which one might
consider ``smooth''. This indicates that the stability gap has closed.

For the stronger stabilisation $\kappa = 1$, the critical QNL
eigenmode is now identical to the atomistic eigenmode, while for
$\kappa = 0.1$ the QNL eigenmode has a shorter wave length. The
existence of this ``weaker'' eigenmode explains the larger stability
gap for $\kappa = 0.1$ compared with $\kappa = 1.0$.

\begin{figure}
  \begin{center}
    \hspace{-4mm} 
    \includegraphics[height=8.9cm]{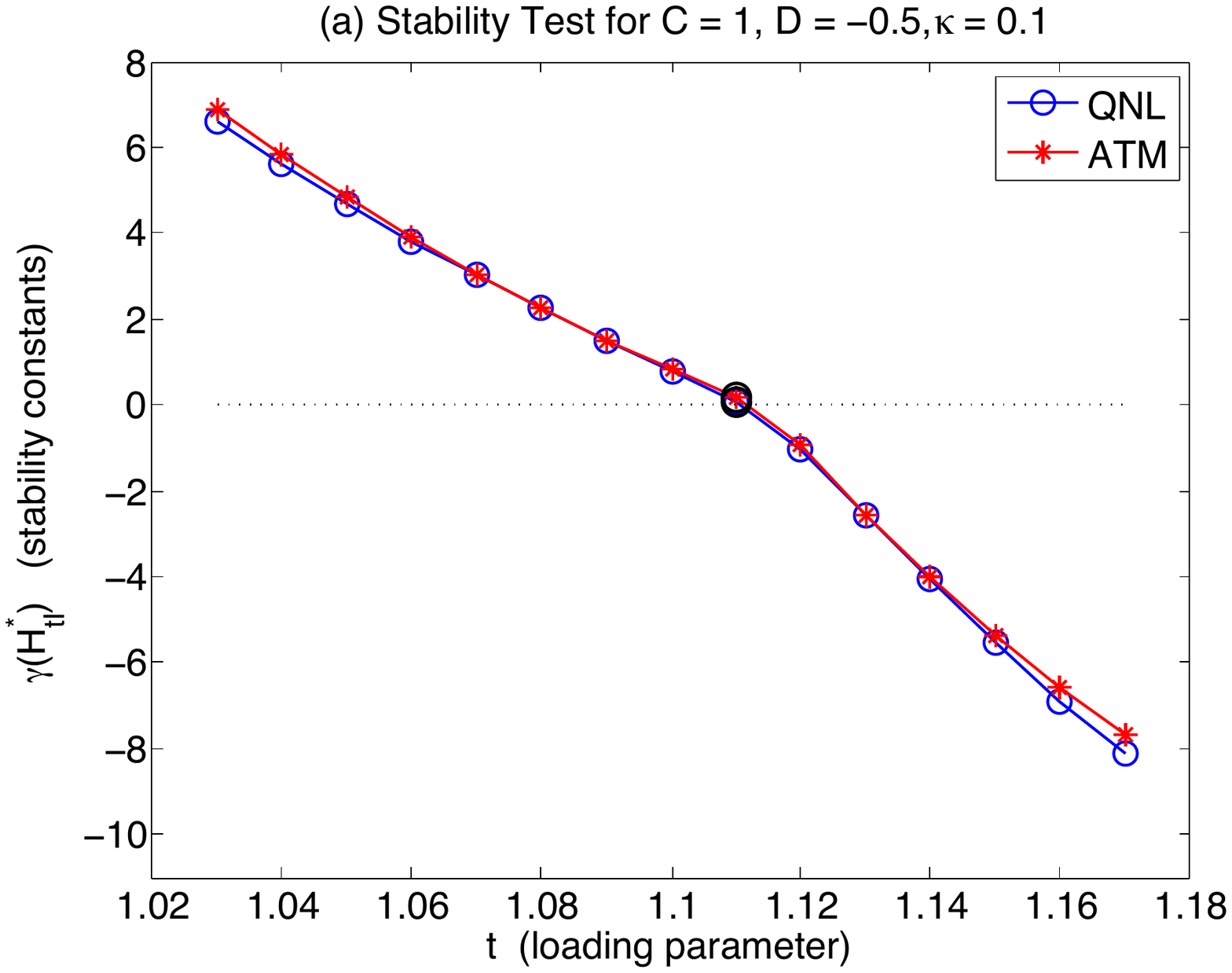}
    \hspace{-4mm}
    \includegraphics[height=8.9cm]{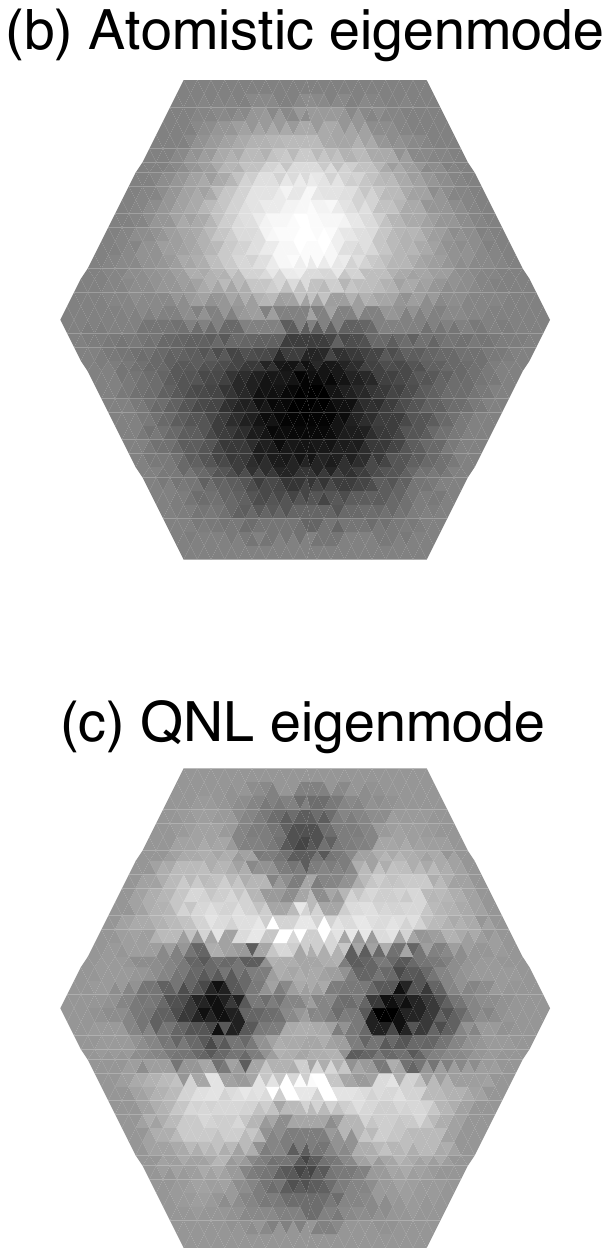}
    \hspace{-4mm} 
    \caption{ \label{fig:exstab[0.1]} Stability test for $C = 1, D =
      -0.5, \kappa = 0.1$, as described in \S~\ref{sec:num2d}. The
      black circles indicate which eigenmodes ($u_1$-component) are
      plotted in (b, c).}
  \end{center}
\end{figure}

\begin{figure}
  \begin{center}
    \hspace{-4mm} 
    \includegraphics[height=8.9cm]{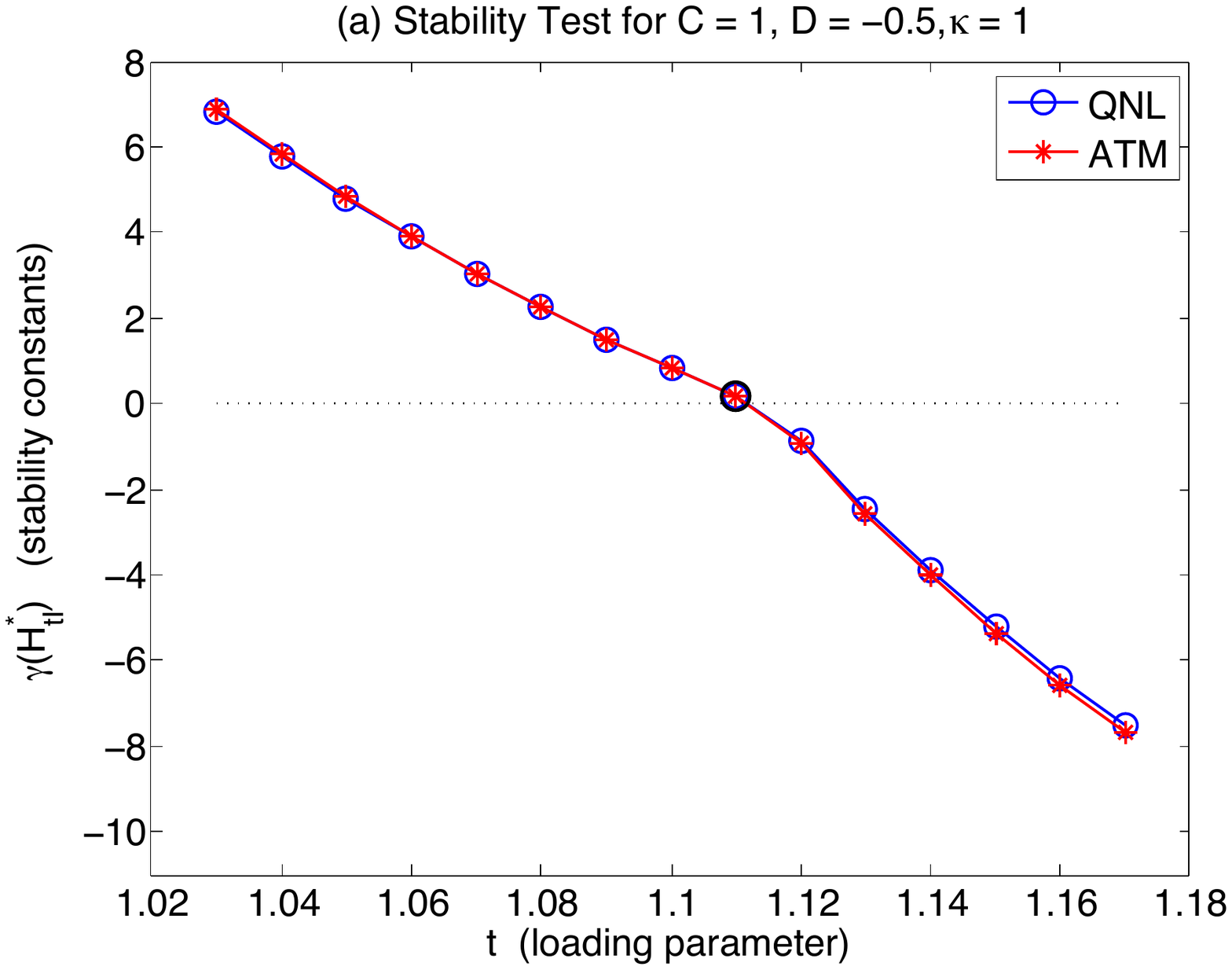}
    \hspace{-4mm}
    \includegraphics[height=8.9cm]{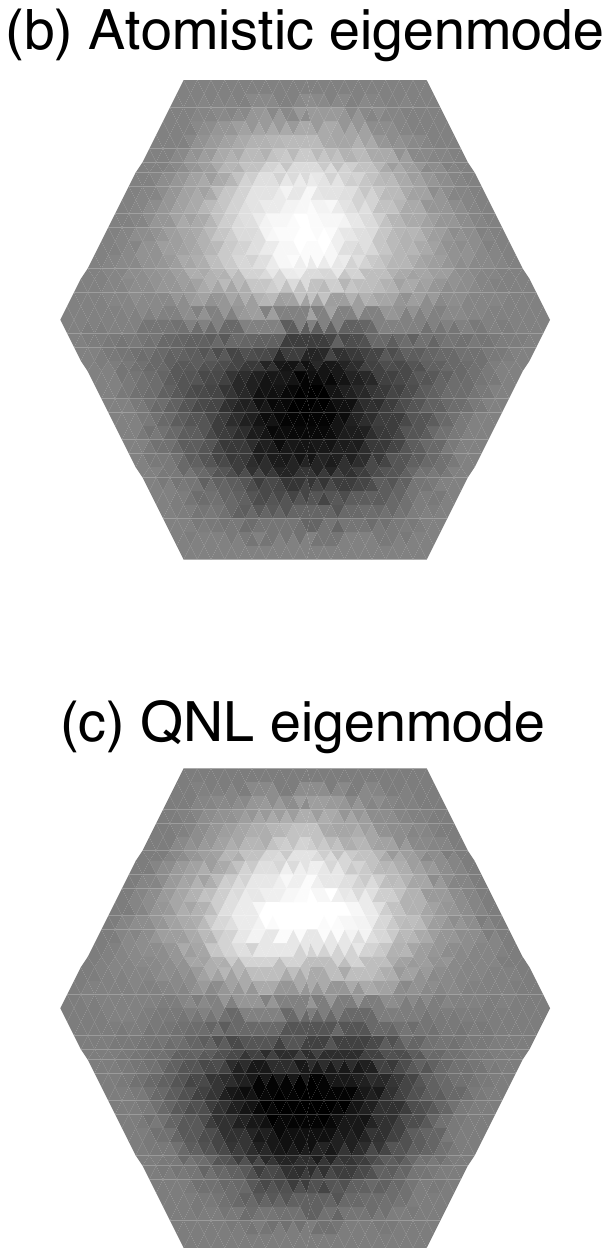}
    \hspace{-4mm} 
    \caption{ \label{fig:exstab[1]} Stability test for $C = 1, D =
      -0.5, \kappa = 1$, as described in \S~\ref{sec:num2d}. The black
      circles indicate which eigenmodes ($u_1$-component) are plotted
      in (b, c).}
  \end{center}
\end{figure}

\section{Conclusion}
The stability of QNL type a/c coupling mechanisms in dimension greater
than one remains an interesting issue. Our results in the present work
indicate that it is unlikely that there exists a {\em universally
  stable} coupling scheme (except in 1D), but that suitable
stabilisation mechanisms must be employed.

We have proposed and analysed a specific stabilisation mechanism in a
simplified setting. Our results indicate that this is a promising
avenue to explore further, but that much additional work is required
to establish this as a practical computational scheme.

We recall, however, that in \S~\ref{sec:num2d} we also raised the
question whether stabilisation is at all required in practice since
the stability errors, at least for the class of interactions we
considered there, appear to be fairly small. However, it is unclear to
us at this point how one might quantify such a statement.

\appendix

\section{Appendices}

\subsection{Details of the instability example in \S~\ref{sec:qnl1d:instab}}
\label{sec:app_1dqnl2_example}
This section describes the detail of the calculation in \S~\ref{sec:qnl1d:instab}. 

The second variation of the atomistic energy gives
\begin{align*}
 \<\Ha u, u\>  =& (2-2\alpha+8\beta-8\gamma+16\delta)\sum_{\xi \in \Z} |D_1 u(\xi)|^2 + (\alpha-2\beta+18\gamma-12\delta)\sum_{\xi \in \Z} |D^2_1 u(\xi)|^2 \\
		& + (-8\gamma+2\delta)\sum_{\xi \in \Z} |D^3_1 u(\xi)|^2+\gamma\sum_{\xi \in \Z} |D^4_1 u(\xi)|^2 
\end{align*}
which can be written in short form as
\begin{displaymath}
\<\Ha u, u\>  = A_1\sum_{\xi \in \Z} |D_1 u(\xi)|^2+  A_2\sum_{\xi \in \Z} |D^2_1 u(\xi)|^2 + A_3\sum_{\xi \in \Z} |D^3_1 u(\xi)|^2+A_4\sum_{\xi \in \Z} |D^4_1 u(\xi)|^2
\end{displaymath}

By \cite{Hudson:stab} (also Li \& Luskin paper), we have 
\begin{displaymath}\label{eqn:cc}
\ca(\mF) = \min_{0\leq s\leq 4} A_1 + A_2 s + A_3 s^2 + A_4 s^3
\end{displaymath}

With the parameters in \S~\ref{sec:qnl1d:instab}, $\alpha = -0.99, \beta = 0.1, \gamma = 0.15,
\delta = -0.2$, we obtain that $\ca(\mF) = 0.02$.

Similarly, the second variation of the QNL energy is
\begin{align*}
 & \<\Hqnl u, u\> \\
 = & (2-2\alpha+8\beta-8\gamma+16\delta)\sum_{\xi \in \Z} |D_1 u(\xi)|^2  + (\alpha-2\beta+18\gamma-12\delta)\sum_{\xi \leq -4} |D^2_1 u(\xi)|^2 \\ 
		  & + (\alpha-2\beta+17\gamma-12\delta)|D^2_1 u(-3)|^2  + (\alpha-2\beta+15\gamma-11\delta)|D^2_1 u(-2)|^2 \\
		  & + (\alpha+6\gamma-5\delta)|D^2_1 u(-1)|^2 +(-8\gamma+2\delta)\sum_{\xi \leq -4} |D^3_1 u(\xi)|^2 + (-6\gamma+2\delta)|D_1 u(-3)|^2 \\
		  & + (-2\gamma+\delta)|D_1 u(-2)|^2 +\gamma \sum_{\xi \leq -4} |D^4_1 u(\xi)|^2
\end{align*}
which gives the explicit expression for the coefficients $A$, $B_\xi$,
$C_\xi$ and $D$ in \eqref{eq:qnl1d:instab}. $\gamma^\qnl$ can be
estimated by numerical calculation.  For $u$ supported in $[-500,
500]$, we have $\gamma^\qnl<-0.005$. The unstable mode is plotted in
Figure \ref{fig:qnl2n_instab}.

\begin{figure}
  \begin{center}
    \includegraphics[height=5cm]{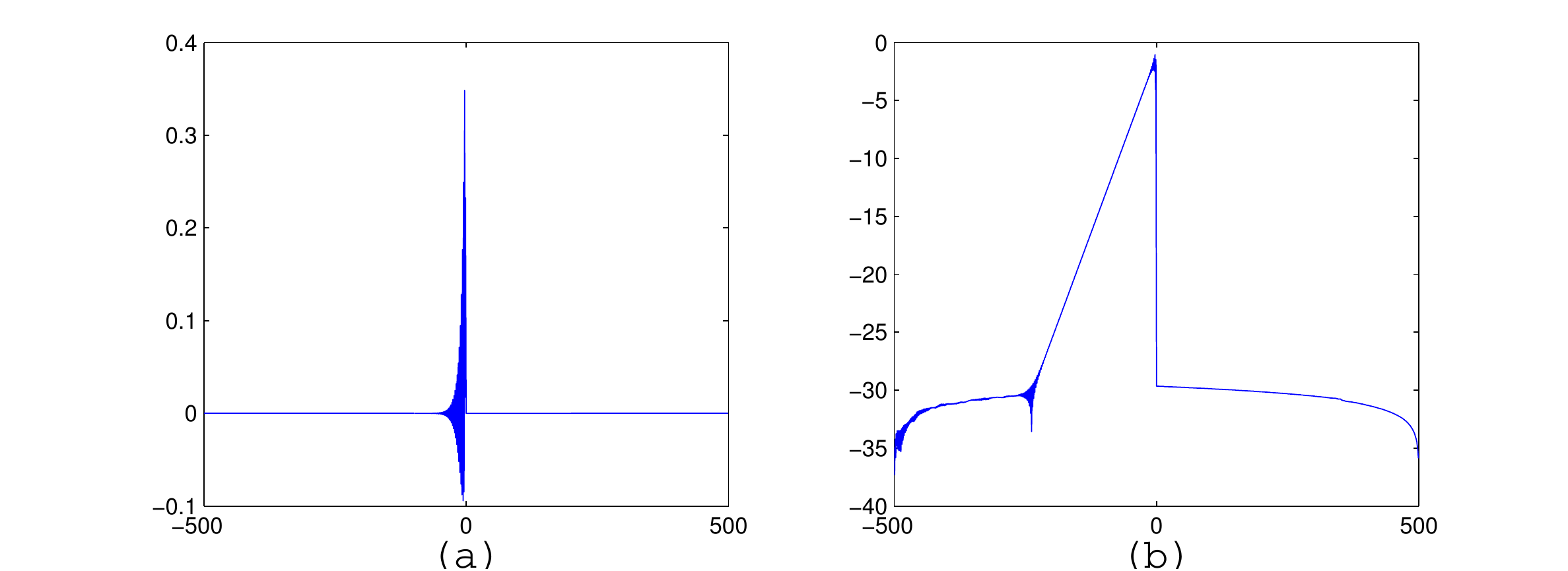}
    \caption{\label{fig:qnl2n_instab} (a) unstable mode of $u$
      supported in [-500, 500]; (b) $\log(|u|)$.}
  \end{center}
\end{figure}

\subsection{Details of the 1D QNL numerical test in \S~\ref{sec:instab_qnl_num}}
\label{sec:app_1dqnl2_num}
This section describes the details of the setup of the numerical test reported in \S~\ref{sec:instab_qnl_num}.

In these experiments we use $\Rg = \{\pm 1, \pm 2\}$, and the EAM type
interaction potential
\begin{align*}
  V(g) &:= \sum_{\rho \in \Rg} \phi(|g_\rho|) + G\bg( \sum_{\rho\in
    \Rg} \psi(|g_\rho|) \bg), \quad \text{where}  \\
  \phi(s) &:= e^{-2 A (s-1)} - 2 e^{-A (s - 1)}, \\
  \psi(s) &:= e^{- B s},  \quad \text{and} \\
  G(s) &:= C \b( (s - s_0)^2 + (s - s_0)^4 \b).
\end{align*}
Throughout, we use the parameters $A = 3, B = 3, C = 5$ and $s_0 = 2
e^{-0.95 B} + 2 e^{-1.9 B}$.

Next, we redefine $\Eqnl$ with finite atomistic and continuum regions. Fix $K, N \in \N$ and a macroscopic strain $\mF > 0$. Admissible deformations $y : \Z \to \R$ are those for which $D_1 y(\xi) > 0$ for all $\xi$ and $y(\xi) = \mF \xi$ for $|\xi| \geq N$. Let $\Us_N := \{ u \in \Us \sep u(\xi) = 0$ for $|\xi| \geq N \}$, then the admissible deformation space is $\mF x + \Us_N$.

For any admissible deformation, we then define 
\begin{align*}
  \Eqnl(y) &:= \sum_{\xi = -K+2}^{K-2} \b[ V(Dy(\xi))  - V (\mF \Rg) \b] \\
  & \quad + \sum_{\xi = -K}^{-K+1} \b[ V(\tilde{D}^- y(\xi))  - V (\mF \Rg) \b] 
  + \sum_{\xi = K-1}^{K} \b[ V(\tilde{D}^+ y(\xi))  - V (\mF \Rg) \b] \\
  & \quad + \int_{-N}^{-K-1/2} \b[ W(\D y) - W(\mF) \b] 
  + \int_{K+1/2}^N \b[ W(\D y) - W(\mF) \b],
\end{align*} 
where $\tilde{D}^+ = (D_{-2}, D_{-1}, D_1, 2 D_2)$ and $\tilde{D}^- =
(2 D_{-1}, D_{-1}, D_1 D_2)$.

We will also compare the results against an atomistic model restricted to a finite domain (by simply restricting the admissible deformations as above), and against the reflection method defined in \S~\ref{sec:refl}, which can be analogously formulated on the finite domain.

Moreover, given parameters $\alpha, \beta \in \R$, define an external force
\begin{displaymath}
  f(\xi) := \beta (1+\xi^2)^{-(\alpha+1)/2}.
\end{displaymath}

Finally, we discretise the continuum region using P1 finite elements. Motivated by the analysis in \cite{acta}, we choose a scaling for the atomistic region size and a scaling for the mesh size, according to the decay of the external force: $K = \lceil N^{(\alpha-1/2) / (\alpha+1/2)}\rceil$ and $h(x) \approx (|x|/K)^{\smfrac23 (\alpha+1)}$. We create the FE mesh using the algorithm described in \cite{acta}. Let $\Us_h$ denote the FE displacement space, of piecewise affine functions extended by zero outside $[-N, N]$.

Using Newton's method, we compute a continuous path of equilibria of the energy \begin{displaymath}
  \Eqnl(y_\mF) - \sum_{\xi \in \Z} f(\xi) \cdot y_\mF(\xi), \quad y_\mF \in \mF x + \Us_h,
\end{displaymath}
starting with $\mF = 1$ and incrementing $\mF$ in small steps, using the previous step as starting guess. Using a bisection type approach, we can define the critical strain $\mF^\qnl$ to be the smallest value of $\mF$ for which $\ddel\Eqnl(y_\mF)$ ceases to be positive definite on $\Us_h$. Analogously, we define the critical strains for the reflection method, $\mF^\refl$, and for the atomistic model restricted to $\Us_N$, $\mF^\a$.

The exact critical strain, $\mF^*$, is defined to be the critical
strain for the unrestricted atomistic model. Since we have shown that
the reflection method is universally stable, which is extended to a
nonlinear deformation in \cite{acta}, we compute $\mF^*$ by
extrapolating the computed critical $\mF^\refl$ for increasing domain
sizes. The results for increasing domain sizes $N$, with corresponding
choices of $K$ and the FE mesh, are displayed in
\S~\ref{sec:instab_qnl_num}.

\subsection{Proof of Proposition \ref{th:2d:cb_hess}}
\label{sec:proof_2d:cb_hess}
%
  Recall that $W(\mG) = V(\mG \Rg) = V(\mG a_1, \dots, \mG
  a_6)$. Therefore,
  \begin{align*}
    \mG^\top \ddW(\mF) \mG = \sum_{i, j = 1}^6 V_{i,j}(\mF\Rg)
    (\mG\cdot a_i) (\mG \cdot a_j).
  \end{align*}

  Fix an element $T$ and let $\xi_j \in \L \cap T$ such that $\xi_j +
  a_j \in T$, then 
  \begin{align}
    \notag
    \D y_T^\top \ddW \D y_T &= \sum_{i,j = 1}^6 V_{i,j} (\D y_T \cdot
    a_i) (\D y_T \cdot a_j) \\
    \label{eq:2d:hess_rep_cb:10}
    &= \sum_{i,j = 1}^6 V_{i,j} \, D_i y(\xi_i) \, D_j y(\xi_j).
  \end{align}
  We now observe that all products $D_i y(\xi_i) D_j y(\xi_j)$ can be
  rewritten in the form $\pm D_i y(\xi_i) D_{i+1} y(\xi_i)$ or $\pm
  D_i y(\xi_i) D_{i-1} y(\xi_i)$, and hence as a sum of squares
  \begin{displaymath}
    D_i y(\xi_i) \, D_j y(\xi_j) = \sum_{k = 1}^3 b_{T,i,j,k} |D_k y(\xi_k)|^2.
  \end{displaymath}
  Inserting this back into \eqref{eq:2d:hess_rep_cb:10}, and then
  summing over $T \in \T$, we conclude that there exist coefficients
  $c_i(\xi, \mF), i = 1, 2, 3$ such that
  \begin{displaymath}
    \< \Hc_\mF u, u \> = \sum_{\xi \in \L} \sum_{i = 1}^3 c_i(\xi,\mF)
    |D_i u(\xi)|^2.
  \end{displaymath}

  Now define $v(\xi) := u(\xi+a_k)$, then clearly $\Ec(v) = \Ec(u)$
  and hence $\< \Hc u, u \> = \< \Hc v, v \>$, which can
  equivalently written as
  \begin{displaymath}
    \sum_{\xi \in \L} \sum_{i = 1}^3 c_i(\xi,\mF)
    |D_i u(\xi)|^2   =  \sum_{\xi \in \L} \sum_{i = 1}^3 c_i(\xi-a_k,\mF)
    |D_i u(\xi)|^2.
  \end{displaymath}
  Lemma \ref{th:equiv_hessians} implies that $c_j(\xi-a_k,\mF) =
  c_j(\xi,\mF)$, that is, the coefficients are independent of $\xi$.

  If we have the full hexagonal symmetry \eqref{eq:2d:hex_symm} then
  an analogous argument, employing again Lemma
  \ref{th:equiv_hessians}, implies that $c_j$ does not depend on the
  directions $j$ either.

  The stated formula for $\ddW$ follows from 
  $\D y_T^\top \ddW(\mF) \D y_T = \frac12 \sum_{j = 1}^3 c_j |D_j
    u(\xi_j)|^2.$

This completes the proof of Proposition \ref{th:2d:cb_hess}.

\subsection{Explicit examples}
\label{sec:2d:qnl_hess_explicit}
Here we explicitly compute the QNL hessian representation that we
established in Proposition \ref{th:2d:qnl_hess_straingrad} and in
Lemma \ref{th:2dqnl:consequence_of_forcecons}, for the three QNL
methods introduced in \S~\ref{sec:2d:def_qnl}. We will consider the
case of full hexagonal symmetry with $V_{i,i+2} = V_{i,i+3} = 0$ since
the atomistic hessian has no strain gradient terms in this case;
\eqref{eq:Ha_coeffs_hexsym} then becomes
\begin{equation}
  \label{eq:2d:hex_sym_simple_Ha_Hc}
  \< \Ha_\mF u, u \> = \< \Hc_\mF u, u \> = 2(\alpha_0 + \alpha_1)
  \| Du \|_{\ell^2}^2.
\end{equation}

\begin{proposition}
  \label{th:2d:Hqnl_simple}
  Suppose that we have hexagonal symmetry \eqref{eq:2d:hex_symm} and
  that $\alpha_j = 0$ for $j = 2, 3$, then there exist sums of squares
  $X^\lref, X^\grac$ such that
  \begin{align}
    \label{eq:2d:Hqce_simple}
    \< H^\qce u, u \> &= \< \Ha u, u \> + {\frac{\alpha_0+4\alpha_1}{3}}
    \! \sum_{\xi \in \L^{(0)}} \! \Big( |D_1 u(\xi)|^2 - |D_1 u(\xi+a_2)|^2 \Big), \\
    \label{eq:2d:Hlref_simple}
    \< H^\lref u, u \> &= \< \Ha u, u \> {- \alpha_1} \! \sum_{\xi \in
      \L^{(0)}} \! \B( |D_1u(\xi)|^2 - |D_1 u(\xi+a_5)|^2 \B) \\
    \notag
    & \hspace{2.2cm} + \! \sum_{\xi \in
      \L^{(0)}} \! X^\lref(D^2 u(\xi)),  \qquad \text{and} \\
    \label{eq:2d:Hg23_simple}
    \< H^{\rm g23} u, u \> &= \< \Ha u, u \> + {(\alpha_0 + 2\alpha_1)} \!\sum_{\xi \in \L^{(0)}}\! \B( |D_1u(\xi)|^2 -
    |D_1 u(\xi+a_2)|^2 \B) \\
    \notag
    & \hspace{2.2cm} + \!\sum_{\xi \in \L^{(0)}}\! X^{\rm g23}(D^2
    u(\xi)).
  \end{align} 
\end{proposition}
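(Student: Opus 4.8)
The plan is to compute, separately for each of the three methods, the interface-site Hessian together with the Cauchy--Born contributions of the triangles meeting the interface row $\L^{(0)}$, and then to match the resulting quadratic form against $\<\Ha_\mF u,u\>$ by means of the rules \eqref{eq:RULE-1}--\eqref{eq:RULE-3}. The conceptual skeleton is supplied by three earlier results. First, Proposition \ref{th:2d:qnl_hess_straingrad} together with the observation (cf.\ the first case in \S\ref{sec:2d:simple_cases}) that $\alpha_2=\alpha_3=0$ forces the atomistic strain-gradient form $X$ to vanish identically: in the present situation every $\tilde{X}_\xi$ in \eqref{eq:2d:qnl_hessian_straingrad} vanishes for $\xi_2\neq 0$, and $\tilde{c}_j(\xi)=c_j$ for every bond not contained in the interface strip. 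Hence $\<(\Hqnl_\mF-\Ha_\mF)u,u\>$ is a localised quadratic form in the differences $D_i u(\xi)$ with $\xi$ on the rows $\L^{(-1)},\L^{(0)},\L^{(1)}$, which by tangential translation invariance of $\tilde{V}$ has exactly the translation-invariant structure claimed in the statement. Second, Lemma \ref{th:2dqnl:consequence_of_forcecons} gives $\tilde{c}_2(\xi)=c_2$, $\tilde{c}_3(\xi)=c_3$ everywhere and $\sum_m\tilde{c}_1^{(m)}=3c_1$, so the only admissible ``strain'' discrepancy between $\Hqnl_\mF$ and $\Ha_\mF$ is a multiple of $\sum_{\xi\in\L^{(0)}}( |D_1 u(\xi)|^2 - |D_1 u(\xi+a_2)|^2 )$ (equivalently, with $a_5=-a_2$ in place of $a_2$). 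Third, Proposition \ref{th:2d:cb_hess} together with \eqref{eq:2d:hex_sym_simple_Ha_Hc} fixes the bulk coefficient $c=2(\alpha_0+\alpha_1)$, against which the interface terms are normalised. (One could instead try to specialise Lemma \ref{th:2d:better_qnl_hessrep}, but since the $\hat{X}_\xi$ there need not be sums of squares, a direct computation is cleaner for the present statement.)

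For the QCE method $\tilde{V}_\xi=V$, so $\<\Hqnl_\mF u,u\>$ and $\<\Ha_\mF u,u\>$ differ only through (i) the interface site in $\L^{(0)}$ carrying the full $V$-Hessian while the continuum part supplies, via the triangle weights $w_T$, only a fraction of the bonds $D_2,D_3$, and (ii) the Cauchy--Born Hessian replacing the absent atomistic contributions of the sites in $\L^{(1)}$. Writing out the $\alpha_0|D_i u(\xi)|^2$ and $\alpha_1 D_i u(\xi) D_{i+1} u(\xi)$ terms at the interface, applying \eqref{eq:RULE-1}, and substituting $c=2(\alpha_0+\alpha_1)$, the second-difference terms cancel identically and \eqref{eq:2d:Hqce_simple} remains; the coefficient $\smfrac{1}{3}(\alpha_0+4\alpha_1)$ equals the value of $\tilde{c}_1^{(1)}-\tilde{c}_1^{(-1)}$ obtained in the proof of Proposition \ref{prop:univ_stab_2d:gen_grac}, which provides an independent check.

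For GRAC-2/3 and the local reflection method the interface potential has the reconstruction form $\tilde{V}(Dy)=V(\tilde{D} y)$ with $\tilde{D}_i=\sum_k M_{ik} D_k$ for a fixed matrix $M$ (for the local reflection method one adds the $\smfrac{1}{6}\sum_{T\in\T^\c,\,\xi\in T}$ Cauchy--Born term), whence $\tilde{V}_{\xi,ij}=\sum_{k,l} M_{ki} V_{kl} M_{lj}$. Substituting $V_{kl}=\alpha_0\delta_{kl}+\alpha_1(\delta_{k,l+1}+\delta_{k,l-1})$ yields an explicit quadratic form in $D_1 u(\xi),\dots,D_6 u(\xi)$; applying \eqref{eq:RULE-1}--\eqref{eq:RULE-3} separates it into a ``strain'' part involving $\sum_i|D_i u(\xi)|^2$-type terms plus second differences, and subtracting $\<\Ha_\mF u,u\>$ leaves the localised correction. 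By the first paragraph the strain part must collapse to the stated telescoping difference, and reading off its coefficient gives $\alpha_0+2\alpha_1$ for GRAC-2/3 (consistent with $p_0\alpha_0+p_1\alpha_1$, $p_0-p_1=1$, from Proposition \ref{prop:univ_stab_2d:gen_grac}) and $-\alpha_1$ for the local reflection method; the remaining second-difference contributions are assembled into the sums of squares $X^\grac$ and $X^\lref$.

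The main obstacle is bookkeeping rather than conceptual content: one must get the triangle weights $w_T$ right for the Cauchy--Born part adjacent to $\L^{(0)}$ (these enter the QCE and local reflection computations directly), carry out the chain-rule expansion of the reconstructed Hessians for GRAC-2/3 and the local reflection method without error, and --- most delicately --- verify that the leftover second-difference terms really do assemble into genuine sums of squares, in particular that they cancel completely for QCE so that no correction term is needed there. Each step is routine but lengthy; the structural constraints from Propositions \ref{th:2d:qnl_hess_straingrad} and \ref{th:2d:cb_hess} and Lemma \ref{th:2dqnl:consequence_of_forcecons} pin down the answer up to a few numerical coefficients, which the explicit computation then supplies.
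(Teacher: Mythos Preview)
Your approach is correct and follows essentially the same structural skeleton as the paper: invoke Proposition~\ref{th:2d:qnl_hess_straingrad} and Lemma~\ref{th:2dqnl:consequence_of_forcecons} to constrain the form of the discrepancy, then compute the remaining coefficients explicitly. The paper makes one organisational simplification you do not: rather than computing the reconstructed Hessians for GRAC-2/3 and the local reflection method from scratch via the matrix $M$, it first establishes the QCE identity and then works with the \emph{differences} $\<(H^\grac-H^\qce)u,u\>$ and $\<(H^\lref-H^\qce)u,u\>$, which localise entirely at $\L^{(0)}$ and eliminate the need to track the triangle weights $w_T$ a second time. Your direct route is equally valid but involves slightly more bookkeeping.

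One minor slip: your cross-check for QCE says the coefficient $\smfrac{1}{3}(\alpha_0+4\alpha_1)$ ``equals the value of $\tilde{c}_1^{(1)}-\tilde{c}_1^{(-1)}$''. In fact the paper's computation gives $\tilde{c}_1^{(-1)}=c_1$, $\tilde{c}_1^{(0)}=c_1+\smfrac{1}{3}(\alpha_0+4\alpha_1)$, $\tilde{c}_1^{(1)}=c_1-\smfrac{1}{3}(\alpha_0+4\alpha_1)$, so $\tilde{c}_1^{(1)}-\tilde{c}_1^{(-1)}=-\smfrac{1}{3}(\alpha_0+4\alpha_1)$, consistent with the formula in the proof of Proposition~\ref{prop:univ_stab_2d:gen_grac}. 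This sign does not affect your main argument.
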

\begin{proof}
  The proofs of these three identities are purely algebraic, but
  fairly tedious. One simply applies the three ``rules''
  \eqref{eq:RULE-1}--\eqref{eq:RULE-3} and then collects coefficients.
  We will give an outline of the proof of \eqref{eq:2d:Hqce_simple}
  and then briefly remark on how to obtain \eqref{eq:2d:Hlref_simple}
  and~\eqref{eq:2d:Hg23_simple}.

  Since $\E^\qce$ is force consistent \eqref{eq:2d:qnl_nogf} Lemma
  \ref{th:2dqnl:consequence_of_forcecons} implies that
  $\tilde{c}_j(\xi) = c_j$ except possibly for $\xi \in \L^{(m)}, m =
  -1, 0, 1$ and for $j = 1$. That is, we only have to compute
  $\tilde{c}_1^{(m)}, m = -1, 0, 1$. Since the bonds $(\xi,\xi+a_1),
  \xi \in \L^{(-1)}$ experience a purely atomistic environment, it
  follows that $\tilde{c}^{(-1)}_1 = c_1$.

  Next, we compute $\tilde{c}^{(0)}_1$. Consider the representative
  bond $(0, a_1)$. The coefficient for this bond receives
  contributions from $\xi \in \{0, a_1, a_6\}$ and from the element $T
  = {\rm conv}\{ 0, a_1, a_2\}$ weighted with a factor
  $1/3$. Computing all of these contributions, we obtain that
  $\tilde{c}_1^{(0)} = c_1 + {(\alpha_0 + 4 \alpha_1)/3}$.

  The coefficient $\tilde{c}^{(1)}_1$ is obtained from the identity
  \eqref{eq:2dqnl:sum_tilc1_eq_3c1}, which implies $\tilde{c}^{(1)}_1
  = c_1 - {(\alpha_0 + 4 \alpha_1)/3}$.

  Finally, the QCE hessian has no strain gradient correction at the
  interface due to the fact that writing either $\D u^\top \ddW(\mF)
  \D u$ or $D_i u D_{i+1} u$ as sums of squares does not produce any
  such terms. This establishes \eqref{eq:2d:Hqce_simple}.

  To obtain \eqref{eq:2d:Hg23_simple} it is easiest to write out $\<
  (H^\grac - H^\qce) u, u \>$ and covert it into strain gradient form,
  by applying the ``rules'' \eqref{eq:RULE-1}--\eqref{eq:RULE-3}. Note
  that, since  for the GRAC-2/3 method,
  \begin{displaymath}
    \tilde{V}(Dy) = V\b(D_1 y, \smfrac12 D_1 y + \smfrac23 D_2 y +
    \smfrac13 D_3 y, \dots\b),
  \end{displaymath}
  computing its hessian we obtain mixed terms of the form 
  \begin{displaymath}
    V_{1,2} D_1 u \b(\smfrac12 D_1 u + \smfrac23 D_2 u +
    \smfrac13 D_3 u \b),
  \end{displaymath}
  and therefore products $D_1 u D_3 u$, and similarly also $D_2 u D_4
  u$ and $D_1 u D_4 u$ occur. These give rise to strain gradient terms
  at the interface.

  To obtain \eqref{eq:2d:Hlref_simple} one can first shift the
  interface by $a_5$ (or $a_6$), again write the difference $\<
  (H^\lref - H^\qce) u, u \>$, and then proceed analogously as
  above. Again, due to the reorganisation of the bond directions,
  strain gradient terms occur.
\end{proof}

\subsection{Generalisation of the 2D strain gradient representation}
\label{sec:app_general_hessrep}

The strain gradient representation in section \ref{sec:stab1d} and
\ref{sec:2d:qnl_hessian_rep} can be generalized to 2D general many-body
potentials, in particular, the a/c Hessian $\Hac_\mF$ of a QNL-type
method can be represented as a sum of squares of the higher order
derivatives of strains. $\Hac_\mF$ is defined as,
\begin{equation}
  \label{eq:app:qnl_hessian_defn}
  \< \Hac_\mF u, u \> = \sum_{\xi \in \La\cup\L^{\i}} \sum_{\rho,\vsig \in \Rg}
  \tilde{V}_{\xi,\rho\vsig} \cdot D_\rho u(\xi) D_\vsig u(\xi) + \sum_{T \in \T^{\c}} w_T
  (\D u_T)^\top \ddW(\mF) \D u_T,
\end{equation}
where $\La$ is the set of atomistic nodes, $\L^{\i}$ is the set of interface nodes. $\tilde{V}$ is energy-consistent and force-consistent, $\tilde{V}=V$ for $\xi\in\La$.  

For 2D lattice $\L$ with 6 nearest neighbor directions $a_i$, $i=1,\dots,6$, suppose that $\xi, \eta \in\L$, a path $\Gamma$ connecting $\xi$ and $\eta$ is a sequence of 
lattice nodes $\mu^i$, $j=0,\dots, N$, such that $\mu^0 = \xi$, $\mu^N = \eta$, and $\mu^{j+1}-\mu^j = e^j$ with $e^j\in\{a_i\}_{i=1}^6$. $N$ is the length of the path $\Gamma$. The shortest path between $\xi$ and $\eta$ is the path connecting $\xi$ and $\eta$ which attains the hopping distance between $\xi$ and $\eta$, such path exists and may not be unique. 
 
Define $R:=\max_{\rho\in\Rg}|\rho|_h$, where $|\cdot|_h$ is the hopping distance. The following lemma is a genalization of Lemma \ref{th:prod_D_lemma} in 2D.

\begin{lemma}
  \label{th:prod_2D_lemma}
  For $\xi \in \L, \rho, \vsig \in \Rg$, we have
  \begin{displaymath}
    D_\rho u(\xi) D_\vsig u(\xi) = \sum_{e_1\in \Ed\cap S(\xi, \rho, \vsig), e_1=(\eta,\eta+\mathbf{a}_1)}c(e_1)|D_{\mathbf{a}_1}u(\eta)|^2 + \sum_{2 \leq r \leq R, \eta\in S(\xi, \rho, \vsig)} X(D^r u(\eta)).
  \end{displaymath}
	where $S(\xi,\rho, \vsig)$ is a union of some $T\in \T$, $\xi,\xi+\rho, \xi+\vsig \in S(\xi, \rho, \vsig)$, such that
$S(\xi,\rho, \vsig)$ is convex and attains minimum area. X is a sum of squares, and the coefficient is nonzero only when the support of higher order strain gradient $D^r u(\eta)$ in the above sum is contained in $S(\xi, \rho, \vsig)$.
\end{lemma}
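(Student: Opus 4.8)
The plan is to imitate the one-dimensional proof of Lemma \ref{th:prod_D_lemma}: express each of $D_\rho u(\xi)$ and $D_\vsig u(\xi)$ as a telescoping sum of nearest-neighbour differences along a lattice path, expand the product, and convert each resulting pairwise product of nearest-neighbour differences into squares. The genuinely new point in 2D is that the six lattice directions are not parallel, so a pairwise product $D_e u(\mu)\,D_f u(\nu)$ is in general neither a "parallel product" nor a product at a common base point; the three identities of Lemma \ref{th:elemprod0_2D_lemma} (the rules \eqref{eq:RULE-1}--\eqref{eq:RULE-3}) are exactly what handles the common-base-point case, and the argument will be an induction having those identities as its base case.

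Concretely, I would first take $S=S(\xi,\rho,\vsig)$ to be the minimal convex union of lattice triangles containing $\xi,\xi+\rho,\xi+\vsig$, and fix shortest lattice paths from $\xi$ to $\xi+\rho$ and from $\xi$ to $\xi+\vsig$ lying in $S$ (a convex union of lattice triangles contains a geodesic between any two of its lattice nodes). Telescoping gives $D_\rho u(\xi)=\sum_a \epsilon_a D_{e_a}u(\mu_a)$ and $D_\vsig u(\xi)=\sum_b \delta_b D_{f_b}u(\nu_b)$ with signs $\epsilon_a,\delta_b\in\{\pm 1\}$, with at most $|\rho|_h\le R$, resp. $|\vsig|_h\le R$, terms, and with all the edges inside $S$. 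Multiplying out, the claim reduces to the auxiliary statement: for any two nearest-neighbour differences $D_e u(\mu)$, $D_f u(\nu)$ whose edges lie in a common convex union of lattice triangles $S$, the product $D_e u(\mu)\,D_f u(\nu)$ equals $\sum_{e'} c(e')\,|D_{a(e')}u(\eta_{e'})|^2$ plus a sum of squares of nearest-neighbour differences of orders $2,\dots,R$, all supported in $S$.

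I would prove this auxiliary statement by induction on $|\mu-\nu|_h$. If $\mu=\nu$: the product is either a square (when $e=f$) or, after relabelling and reflecting the six directions, one of \eqref{eq:RULE-1}--\eqref{eq:RULE-3}, whose right-hand sides are already of the required form and supported in the (at most two) triangles spanned by $\mu$ and the two directions. If $\mu\ne\nu$: telescope the second factor back to base point $\mu$ keeping the direction $f$ fixed, $D_f u(\nu)=D_f u(\mu)+\sum_\ell \pm D_{g_\ell}D_f u(\lambda_\ell)$, which is legitimate since a change of base point of a fixed-direction first difference produces a sum of second differences. The term $D_e u(\mu)D_f u(\mu)$ is handled by the case $\mu=\nu$; each remaining term $D_e u(\mu)\,D_{g_\ell}D_f u(\lambda_\ell)$ is a product of a first and a second difference, which I reduce by moving the second difference to base point $\mu$ (generating third differences, to be kept and recursed on) and then using that a first difference times a second difference at a common base point can, after commuting the operators in the second difference and writing one factor as a difference of two \emph{parallel} first differences, be reduced by the polarisation identity $pq=\tfrac12(p^2+q^2-(p-q)^2)$ together with the fact that a difference of parallel first differences is itself a second difference. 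Convexity of $S$ lets one choose every auxiliary point inside $S$, so the support condition is automatic; a separate check shows the orders generated never exceed $R$. Finally, evaluating the target identity on affine functions forces $\sum_{e'}c(e')\,a(e')\otimes a(e')=\tfrac12(\rho\otimes\vsig+\vsig\otimes\rho)$, which is consistent because $a_1\otimes a_1,a_2\otimes a_2,a_3\otimes a_3$ are linearly independent; in practice one just reads off $c(e')$ from the reductions above.

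The step I expect to be the main obstacle is precisely the treatment of "first difference times higher difference at a common base point" and the verification that the resulting cascade of reductions terminates with only pure squares of differences of orders $\le R$: in 1D this difficulty is invisible because all nearest-neighbour differences are parallel, and even the 2D nearest-neighbour rules of Lemma \ref{th:elemprod0_2D_lemma} never leave the common-base-point regime, so the bookkeeping of orders and of supports for general interaction range is where the elementary but lengthy work concentrates. Should the direct bookkeeping become unwieldy, I would instead run the whole proof as a single induction on $|\rho|_h+|\vsig|_h$, peeling one nearest-neighbour step $e$ off $\rho$ via $D_\rho u(\xi)=D_e u(\xi)+D_{\rho-e}u(\xi+e)$ with $e$ chosen to point into $S$, and absorbing the base-point mismatch through $D_\vsig u(\xi)-D_\vsig u(\xi+e)=-D_eD_\vsig u(\xi)$, the extra higher-order term being handled by the inductive hypothesis.
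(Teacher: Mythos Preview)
Your proposal is correct and follows essentially the same route as the paper: telescope $D_\rho u(\xi)$ and $D_\vsig u(\xi)$ along shortest lattice paths into sums of nearest-neighbour differences, multiply out, and then reduce each pairwise product $D_{\mathbf a}u(\mu)\,D_{\mathbf a'}u(\nu)$ by an induction whose base case is Lemma~\ref{th:elemprod0_2D_lemma}. The paper packages your auxiliary statement as a separate lemma (Lemma~\ref{th:elemprod_2D_lemma}, induction on the level $n$ with $e'\in T_n(e)$) and is considerably terser about the inductive step than you are; your more explicit account of the ``first difference times higher difference'' bookkeeping is exactly the content the paper leaves implicit.
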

\begin{proof}
	Let $\Gamma_\rho$ be a shortest path between $\xi$ and $\xi+\rho$ with nodes $\mu^j_\rho$, $j=0,\dots,N_\rho$, and 
	directions $e^j_\rho$, $j = 0, \dots, N_\rho-1$. It is clear from the definitions that
  \begin{displaymath}
    D_\rho u(\xi) = \sum_{j = 0}^{N_\rho -1} D_{e_\rho^j}u(\mu^\rho_j) ,
  \end{displaymath}
  and therefore,
  \begin{displaymath}
    D_\rho u(\xi) D_\vsig u(\xi) = \sum_{j = 0}^{N_\rho -1}
    \sum_{j' =0}^{N_\vsig-1}  D_{e_\rho^j} u(\mu^\rho_j) D_{e_\vsig^{j'}} u(\mu^\vsig_{j'}).
	\end{displaymath}
	To finish the proof, we just need the following Lemma \ref{th:elemprod_2D_lemma}.

\end{proof}

Let $\Ed$ denote all the edges in the triangulation $\T$. If $e\in\Ed$, then there exist $\xi\in\L$ and $\mathbf{a}\in \{a_i\}_{i=1}^6$, such that $e=(\xi, \xi+\mathbf{a})$.
Define $S_1(e)$ as the union of two triangles in $\T$ which share $e$ as a common edge, $S_1(e) := \cup_{e\in T} T$, $T_1(e) = S_1(e)$, for $n>1$, $S_n(e) := \cup_{T\in\T, T\cap S_{n-1}(\e)\neq \emptyset}T$, and $T_n(e) := \overline{S_n(e)\setminus S_{n-1}(e)}$.


For $e, e'\in\Ed$, let $S(e,e')$ be a union of some $T\in \T$, $e,e'\in S(e,e')$, such that
$S(e,e')$ is convex and attains minimum area. If $e'\in T_n(e)$, since $S_n(e)$ is convex, we have $S(e,e')\subset S_n(e)$.

\begin{lemma}
	\label{th:elemprod_2D_lemma}
	For $e, e' \in \Ed$, $e=(\xi, \xi+\mathbf{a})$, $e'=(\xi',\xi'+\mathbf{a}')$. If $e'\in T_n(e)$, then 
	\begin{displaymath}
		D_{\mathbf{a}}u(\xi)  D_{\mathbf{a}'}u(\xi') = \sum_{e_1\in \Ed\cap S(e, e'), e_1=(\eta,\eta+\mathbf{a}_1)} c(e_1)|D_{\mathbf{a}_1}u(\eta)|^2 + \sum_{2 \leq r \leq n, \eta\in S(e,e')} X(D^r u(\eta)).
	\end{displaymath}
	where $X$ is a sum of squares, and the coefficients of the sum is nonzero only when the support of higher order strain gradient $D^r u(\eta)$ is contained in $S(e, e')$.
\end{lemma}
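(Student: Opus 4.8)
The proof will go by induction on $n$, with the only algebraic inputs being the three elementary identities \eqref{eq:RULE-1}--\eqref{eq:RULE-3} of Lemma~\ref{th:elemprod0_2D_lemma} and the polarisation identity $2FG = F^2 + G^2 - (F-G)^2$. For the base case $n=1$ I will use that every edge of $S_1(e)$ is incident to one of the two endpoints of $e$: consequently $e$ and $e'$ share a vertex, and a short inspection of the triangular lattice shows that the two edges are then necessarily at adjacent nearest-neighbour directions, so that $D_{\mathbf a}u(\xi)\,D_{\mathbf a'}u(\xi')$ is, up to sign and relabelling of indices, exactly a product of the type treated in \eqref{eq:RULE-1}. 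Its right-hand side is a linear combination of squares of nearest-neighbour differences, all supported in $S_1(e) = S(e,e')$, which is the claimed decomposition (with the $r$-sum empty).

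\textbf{Inductive step.} Assume $e'\in T_n(e)$ with $n\ge 2$. I will pick an edge $\tilde e = (\tilde\xi,\tilde\xi+\mathbf{a}')$ lying in $T_{n-1}(e)\cap S(e,e')$, parallel to $e'$ and adjacent to it (so $\xi'=\tilde\xi+g$ for a nearest-neighbour direction $g$ pointing ``towards $e$''), so that $D_{\mathbf a'}u(\xi') = D_{\mathbf a'}u(\tilde\xi) + D_g D_{\mathbf a'}u(\tilde\xi)$, with all involved sites lying in $S(e,e')$ by its convexity; the existence of such a $\tilde e$ is a geometric fact about the layering that I would verify directly. Multiplying by $D_{\mathbf a}u(\xi)$ splits the product into $D_{\mathbf a}u(\xi)\,D_{\mathbf a'}u(\tilde\xi)$, which is covered by the induction hypothesis applied to the pair $(e,\tilde e)$ (since $S(e,\tilde e)\subseteq S(e,e')$ and $\tilde e\in T_{n-1}(e)$), plus a term $D_{\mathbf a}u(\xi)\,D_g D_{\mathbf a'}u(\tilde\xi)$, i.e.\ a product of a first and a second difference. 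To absorb this last term I will prove, as an auxiliary statement by a nested induction on $p+q$, that the product of any difference of order $p$ and any difference of order $q$, both supported in $S(e,e')$, equals a sum of squares of differences of order at most $p+q$ with supports in $S(e,e')$; the mechanism is again polarisation, with $(F-G)^2$ handled by telescoping one factor by a single nearest-neighbour step until the two factors carry the same outermost difference operator (so that $F-G$ genuinely gains an order), and by the halving identity
\[
 |\ell(\eta)-\ell(\eta')|^2 = 2|\ell(\eta)-\ell(\zeta)|^2 + 2|\ell(\zeta)-\ell(\eta')|^2 - |\ell(\eta)-2\ell(\zeta)+\ell(\eta')|^2
\]
(with $\zeta$ on a geodesic between $\eta$ and $\eta'$ inside $S(e,e')$) to break long-range differences of differences into short ones.

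\textbf{Main obstacle.} The algebra at each step is routine; the real work, and the step I expect to be delicate, is keeping the two invariants --- \emph{every finite difference that is produced has its full support inside $S(e,e')$}, and \emph{its order never exceeds $n$} --- alive through the nested inductions. The support invariant rests on the convexity of $S(e,e')$ and on choosing all telescoping paths within it; the order invariant rests on the observation that $S(e,e')\subseteq S_n(e)$ has ``width'' at most $n$ triangle layers, so that both the telescoping paths and the recursion on $p+q$ terminate before order $n$ is exceeded. Formulating the induction hypothesis so that it carries both invariants simultaneously --- together with the correct bound on $p+q$ in the auxiliary statement --- is the crux of the argument.
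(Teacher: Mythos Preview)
Your proposal takes essentially the same approach as the paper: induction on $n$, with the elementary identities \eqref{eq:RULE-1}--\eqref{eq:RULE-3} of Lemma~\ref{th:elemprod0_2D_lemma} serving as the base. The paper's own proof is in fact far terser than yours---it reads in full: ``Notice that Lemma~\ref{th:elemprod0_2D_lemma} gives special case for $e'\in T_1(e)$ and $e'\in T_2(e)$. The general situation can be proved by induction.'' So you have already written more than the authors did, and your identification of the two invariants (support contained in $S(e,e')$, order bounded by $n$) as the crux is exactly right.

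One small difference worth noting: the paper takes \emph{both} $n=1$ and $n=2$ as base cases, since all three rules \eqref{eq:RULE-1}--\eqref{eq:RULE-3} are available and together cover every configuration of two nearest-neighbour edges sharing a vertex (adjacent, once-removed, and opposite directions). Starting the induction at $n=2$ rather than $n=1$ may let you avoid the heavier nested induction on $p+q$ that you propose for the cross-term $D_{\mathbf a}u(\xi)\cdot D_gD_{\mathbf a'}u(\tilde\xi)$, because with the richer base you can more directly rewrite such a product using polarisation and the three rules. That said, your nested-induction route is not wrong---just more machinery than the paper's sketch seems to have in mind.
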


\begin{proof}
  Notice that Lemma \ref{th:elemprod0_2D_lemma} gives special case for
  $e'\in T_1(e)$ and $e'\in T_2(e)$. The general situation can be
  proved by induction.
\end{proof}


Now we have the following generalization of Proposition \ref{th:2d:qnl_hess_straingrad}

\begin{proposition}
  \label{th:app:sgrad_c0_eq_ddW}
  Let $\Hac_\mF$ be of the general form \eqref{eq:app:qnl_hessian_defn}, then
  \begin{equation}
    \label{eq:app:qnl_hessian_straingrad}
    \< \Hac_\mF u, u \> = \sum_{\rho\in\Rg}\sum_{\xi\in\L}\tilde{c}_\rho(\xi) |D_\rho u(\xi)|^2 + \sum_{\xi \in \L, 2\leq r \leq R} \tilde{X}_\xi(D^ru(\xi)),
  \end{equation}
	Furthermore, let $\Ha$ be the atomistic hessian, we obtain that 
  \begin{equation}
    \label{eq:app:atm_hessian_straingrad}
    \< \Ha_\mF u, u \> = \sum_{\rho\in\Rg}\sum_{\xi\in\L}c_\rho |D_\rho u(\xi)|^2 + \sum_{\xi \in \L, 2\leq r \leq R} X(D^ru(\xi)),
  \end{equation}
	Let $\Hc$ be the Cauchy-Born hessian, we obtain that
	\begin{equation}
    \label{eq:app:cb_hessian_straingrad}
    \< \Hc_\mF u, u \> = \sum_{\rho\in\Rg}\sum_{\xi\in\L}c_\rho |D_\rho u(\xi)|^2,
  \end{equation}
	
	Here we use consistent paths to compute all the hessians, namely, for each pair $\rho, \vsig$ in \eqref{eq:app:qnl_hessian_defn}, the shortest paths for different 
	$\xi$ are invariant with respect to translation. Furthermore, we have 	
  \begin{align}
    \label{eq:app:qnlhess_strgrad_cj}
    \tilde{c}_\rho(\xi) = c_\rho \quad &\text{except if both } 
    \xi, \xi+\rho \in \L^{\i}+\Rg,  \\
    \label{eq:app:qnlhess_strgrad_Xc}
     \tilde{X}_\xi = 0 \quad &\text{for } (\xi+\Rg)\cap(\La\cup \L^{\i}) = \emptyset, \quad \text{and} \\
    \label{eq:app:qnlhess_strgrad_Xa}
     \tilde{X}_\xi = X \quad &\text{for } (\xi+\Rg)\cap(\L^{\i}\cup \T^{\c}) = \emptyset.
  \end{align}
\end{proposition}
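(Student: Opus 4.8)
The plan is to run the one- and two-dimensional arguments (Lemma \ref{th:qnl:sgrad_c0_eq_ddW}, Proposition \ref{th:2d:atm_hessrep}, Proposition \ref{th:2d:qnl_hess_straingrad}) in parallel, using the general reduction Lemma \ref{th:prod_2D_lemma} in place of the three elementary rules \eqref{eq:RULE-1}--\eqref{eq:RULE-3}. \textbf{Step 1 (reduction of each hessian).} First I would fix, once and for all, a \emph{translation-equivariant} family of shortest paths — the ``consistent paths'' of the statement — so that $S(\xi+a_k,\rho,\vsig) = S(\xi,\rho,\vsig)+a_k$, and then apply Lemma \ref{th:prod_2D_lemma} to every product $D_\rho u(\xi) D_\vsig u(\xi)$ occurring in the canonical representation \eqref{eq:app:qnl_hessian_defn} of $\Hac_\mF$. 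Each application rewrites such a product as a sum of squares of nearest-neighbour differences $|D_{a_1}u(\eta)|^2$ plus sums of squares $X(D^r u(\eta))$, $2\le r\le R$, with all supports contained in the bounded convex region $S(\xi,\rho,\vsig)$. For the continuum term $w_T(\D u_T)^\top\ddW(\mF)\D u_T$ I would argue as in the proof of Proposition \ref{th:2d:cb_hess}: $\D u_T\cdot\rho$ is a fixed linear combination of the three edge-differences of $T$, so every product reduces to a sum of squares of nearest-neighbour differences at vertices of $T$ and produces no higher-order strain-gradient term. Collecting coefficients and using translation equivariance yields \eqref{eq:app:qnl_hessian_straingrad}; the same computation with $\tilde V_\xi\equiv V$ gives \eqref{eq:app:atm_hessian_straingrad} with \emph{constant} coefficients $c_\rho$ (the general-range analogue of Proposition \ref{th:2d:atm_hessrep}), and applied to $\Ec$ it gives \eqref{eq:app:cb_hessian_straingrad}, again with no higher-order terms.

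\textbf{Step 2 (matching of bulk coefficients).} Next I would show that the first-order coefficients of $\Ha$ and $\Hc$ agree, by the scaling argument of Proposition \ref{th:2d:atm_hessrep}: evaluating both quadratic forms on $u^{(\eps)}(\xi):=\eps u(\eps\xi)$ for $u\in C_0^\infty$ and letting $\eps\to 0$ identifies each limit with the Cauchy--Born continuum form, so the coefficient vectors coincide. For $\Hac_\mF$ I would use the ghost-force condition to obtain $\<\ddel\Eac(\mF x)\,\mG x, v\>=0$ for all $v\in\Usz$, $\mG\in\R^2$ (cf.\ \eqref{eq:2dqnl:no_gf_hess}); since $D^2(\mG x)=0$ this annihilates every higher-order strain-gradient square and, exactly as in Lemma \ref{th:qnl:sgrad_c0_eq_ddW}, leaves a discrete divergence identity on the $\tilde c_\rho(\xi)$ which, in the atomistic and continuum bulk where no modified potential interferes, pins them to the common value $c_\rho$.

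\textbf{Step 3 (locality).} The support statements \eqref{eq:app:qnlhess_strgrad_cj}--\eqref{eq:app:qnlhess_strgrad_Xa} then follow by bookkeeping. A bond $(\xi,\xi+\rho)$ receives a contribution from a site $\eta\in\La\cup\L^\i$ (resp.\ a triangle $T\in\T^\c$) only if the reduction of $\eta$'s (resp.\ $T$'s) contribution produces a square supported at that bond, which forces $\xi,\xi+\rho$ into the $R$-hopping ball around $\eta$ (resp.\ into $T$); hence $\tilde c_\rho(\xi)$ can differ from the bulk value $c_\rho$ only when some modified site $\eta\in\L^\i$ lies within that range of the bond, i.e.\ exactly when $\xi,\xi+\rho\in\L^\i+\Rg$. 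The same accounting applied to the higher-order squares — together with the fact, from Step 1, that the continuum term produces none of them — gives $\tilde X_\xi=0$ when $(\xi+\Rg)\cap(\La\cup\L^\i)=\emptyset$, and $\tilde X_\xi=X$ when $(\xi+\Rg)\cap(\L^\i\cup\T^\c)=\emptyset$.

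\textbf{Main obstacle.} The algebra of Step 1 is routine; the delicate point is Step 2 combined with the equivariance requirement. In the general-range setting $\mG x\notin\Usz$, so (as in the proof of Lemma \ref{th:2dqnl:consequence_of_forcecons}) the patch-test identity must be extracted with compactly supported test functions, and one must check that with the ``consistent paths'' choice the combinatorial coefficients generated in the atomistic part of $\Hac_\mF$ are \emph{literally} those generated for $\Ha$, so that the difference $\Hac_\mF-\Ha_\mF$ is genuinely localised near the interface. Verifying this translation-equivariant bookkeeping, and that the atomistic and Cauchy--Born first-order coefficients coincide even in the presence of long-range interactions, is where the real care is needed.
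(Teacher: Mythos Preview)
Your proposal is correct and follows essentially the same route as the paper: the paper's proof is a one-line reference to Lemma~\ref{th:prod_2D_lemma} together with ``argue as in Propositions~\ref{th:2d:cb_hess}, \ref{th:2d:atm_hessrep}, \ref{th:2d:qnl_hess_straingrad}'', and your Steps~1--3 spell out exactly that programme.

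One remark: the force-consistency detour in your Step~2 (and the ``main obstacle'' you flag) is not needed for this proposition. None of the referenced results use \eqref{eq:2d:qnl_nogf}; the identity $\tilde c_\rho(\xi)=c_\rho$ away from the interface follows purely from the locality bookkeeping you already do in Step~3, combined with the scaling argument that matches the atomistic and Cauchy--Born first-order coefficients. The patch-test refinement involving $\mG x\notin\Usz$ belongs to the analogue of Lemma~\ref{th:2dqnl:consequence_of_forcecons}, not to the present statement, so you can drop that concern here.
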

\begin{proof}
  Applying Lemma \ref{th:prod_2D_lemma} to the hessian representation
  \eqref{eq:app:qnl_hessian_defn} we obtain
  \eqref{eq:app:qnl_hessian_straingrad}. Other conclusions can be
  proved similiarly as in Proposition \ref{th:2d:cb_hess},
  \ref{th:2d:atm_hessrep} and \ref{th:2d:qnl_hess_straingrad}
\end{proof}

\subsection{Stability of inhomogeneous configurations}
\label{sec:stab_inhom}
Throughout this paper we only considered the stability of the QNL
method at homogeneous crystalline states. Here, we present a simple
argument, that can be used to extend stability results to
inhomogeneous states. We consider a point defect and show that, if the
a/c interface is sufficiently far from the defect core, then stability
of the defect in the atomistic model and stability of the a/c method
in the reference state imply stability of the defect in the a/c
model. The following is a 2D, scalar, flat interface variant of the 1D
result \cite[Theorem 7.8]{acta}. It is weaker in that it is not
quantitative, but has the advantage that it is readily extended to a
variety of situations, including 3D and vectorial problems.

We employ the notation and definitions of \S~\ref{sec:qnl2d_model}. In
addition, let $\ddel\Ea(y), \ddel\Eqnl(y)$ denote, respectively, the
hessians of the atomistic and QNL methods at a configuration $y : \L
\to \R$. That is, for $v \in \Usz$,
\begin{align*}
  \< \ddel\Ea(y) v, v \> &= \sum_{\xi \in \L} \sum_{i,j = 1}^6
  V_{i,j}(Dy(\xi)) D_i v(\xi) D_j v(\xi), \qquad \text{and} \\
  \< \ddel\Eqnl(y) v, v \> &= \!\!\!\!\! \sum_{\xi \in \L^\a \cup \L^{(0)}}
  \sum_{i,j = 1}^6 \tilde{V}_{\xi,ij}(Dy(\xi)) D_iv(\xi) D_j v(\xi) + \sum_{T \in \T} w_T
  (\D v_T)^\top W''(\D y_T) \D v_T;
\end{align*}
cf. \eqref{eq:2d:HaF_defn} and \eqref{eq:2d:qnl_hessian_defn}.

We model a point defect by adding a perturbation $P(y) = P( \{ y(\xi);
|\xi| \leq R^{\rm def} \} )$, for some $R^{\rm def} > 0$. We assume
that $P$ is twice Gateux differentiable. See \cite[\S
2.3.2]{EhrOrtSha:2013} for a justification that this model includes
essentially all point defects.

Finally, we define the shifted deformation $y_N(\xi) := y(\xi + N
a_2)$, which effectively moves the interface away from the defect. The
shifted defect potential is defined by $P_N(y) := P(y_{-N})$, so that
$P_N(y_N) = P(y)$.

\begin{proposition}
  Let $\mF \in \R^{d}, u : \L \to \R$ such that $\D u \in L^2$. Assume
  that the configuration $y := \mF x + u$ is stable in the atomistic
  model: there exists $\gamma^{\rm def} > 0$ such that
  \begin{equation}
    \label{eq:app_stab_a}
    \< [\ddel\Ea(y) + \ddel P(y)] v, v \> \geq \gamma^{\rm def} \| \D
    v \|_{L^2}^2 \qquad \forall v \in \Usz;
  \end{equation}
  and that the QNL energy is stable ``at infinity'': $\gamma^{\rm qnl}
  := \gamma(\ddel\Eqnl(\mF x)) > 0$.

  Finally, suppose that $V_{i,j}$ and $\tilde{V}_{i,j}$ are locally
  Liptschitz continuous at $\mF\Rg$: there exist $L, \epsilon > 0$
  such that
  \begin{equation}
    \label{eq:inhom_Lip}
    |\hat{V}_{i,j}(\mF\Rg+\bfg) - \hat{V}_{i,j}(\mF\Rg)| \leq L |\bfg| \qquad
    \text{for } |\bfg| \leq \epsilon, \quad \hat{V} \in \{V, \tilde{V}\}.
  \end{equation}
  
  Then,
  \begin{equation}
    \label{eq:1}
    \liminf_{N \to \infty} \inf_{\substack{v \in \Usz \\ \|\D v
        \|_{L^2} = 1}} \b\< [\ddel \Eqnl(y_N) + P_N(y_N)] v, v \b\> = \min
    \{ \gamma^{\rm def}, \gamma^{\rm qnl} \}.
  \end{equation}
\end{proposition}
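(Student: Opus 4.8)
Write $\lambda_N:=\inf_{v\in\Usz,\ \|\D v\|_{L^2}=1}\<[\ddel\Eqnl(y_N)+\ddel P_N(y_N)]v,v\>$, so the claim is $\liminf_{N\to\infty}\lambda_N=\min\{\gamma^{\rm def},\gamma^{\rm qnl}\}$. The plan is to prove $\limsup_N\lambda_N\le\min\{\gamma^{\rm def},\gamma^{\rm qnl}\}$ and $\liminf_N\lambda_N\ge\min\{\gamma^{\rm def},\gamma^{\rm qnl}\}$, taking $\gamma^{\rm def}$ to be the sharp constant $\gamma\big(\ddel\Ea(y)+\ddel P(y)\big)$ (positive by \eqref{eq:app_stab_a}); the lower bound below works for any admissible $\gamma^{\rm def}$, and the upper bound produces exactly this constant. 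Two elementary observations are used throughout. (i) Since $\D u\in L^2$, i.e.\ $\|Du\|_{\ell^2(\L)}<\infty$, one has $|Du(\eta)|\to0$ as $|\eta|\to\infty$; hence $Dy_N(\xi)=\mF\Rg+Du(\xi+Na_2)\to\mF\Rg$ uniformly on any set at bounded distance from the interface, and the Hessian perturbation $R_N:=\ddel\Eqnl(y_N)-\ddel\Eqnl(\mF x)$ has coefficient norm at $\xi$ bounded by a constant times $|Du(\xi+Na_2)|$ (by the local Lipschitz bounds \eqref{eq:inhom_Lip} on $V_{ij},\tilde V_{ij}$ and on $W''$), hence uniformly small for $\xi$ far below the defect core. (ii) Because $\ddel\Ea$, the atomistic part of $\ddel\Eqnl$, and $\ddel P$ all transform covariantly under the lattice translation $\xi\mapsto\xi+Na_2$, the operator $\ddel P_N(y_N)$ and the part of $\ddel\Eqnl(y_N)$ supported near $-Na_2$ are exact translates of $\ddel P(y)$ and $\ddel\Ea(y)$ near the origin.

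\emph{Upper bound.} We use two families of localized trial functions. First, pick $v\in\Usz$, $\|\D v\|_{L^2}=1$, with $\<\ddel\Eqnl(\mF x)v,v\>\le\gamma^{\rm qnl}+\eps$; its support is a fixed finite set, hence disjoint from the defect core $B(-Na_2,R^{\rm def})$ for $N$ large, so $\<\ddel P_N(y_N)v,v\>=0$, while (i) and \eqref{eq:inhom_Lip} give $\<\ddel\Eqnl(y_N)v,v\>\to\<\ddel\Eqnl(\mF x)v,v\>$; thus $\limsup_N\lambda_N\le\gamma^{\rm qnl}$. Second, pick $v\in\Usz$, $\|\D v\|_{L^2}=1$, with $\<[\ddel\Ea(y)+\ddel P(y)]v,v\>\le\gamma^{\rm def}+\eps$, and set $v_N(\xi):=v(\xi+Na_2)$, so $\|\D v_N\|_{L^2}=1$ and, for $N$ large, $\supp v_N$ lies deep in the atomistic region; then (ii) gives $\<[\ddel\Eqnl(y_N)+\ddel P_N(y_N)]v_N,v_N\>=\<[\ddel\Ea(y)+\ddel P(y)]v,v\>\le\gamma^{\rm def}+\eps$, so $\limsup_N\lambda_N\le\gamma^{\rm def}$. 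Hence $\limsup_N\lambda_N\le\min\{\gamma^{\rm def},\gamma^{\rm qnl}\}$.

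\emph{Lower bound.} This is an IMS-type localization decoupling the defect core from the receding interface. Fix $v\in\Usz$ with $\|\D v\|_{L^2}=1$. Among $K=K(N)$ concentric dyadic annuli $A_1,\dots,A_K$ about $-Na_2$, with inner radius of $A_1$ exceeding $R^{\rm def}$ and outer radius of $A_K$ a fixed small fraction of $N$ (so the whole construction stays strictly below the interface), pigeonhole gives $k^\ast$ with $\|\D v\|_{L^2(A_{k^\ast}')}^2\lesssim1/K$, $A'$ denoting a bounded dilation of $A$; we arrange $K\to\infty$ and $\diam(A_{k^\ast})\to\infty$, e.g.\ $K\sim\log N$ and annuli starting at radius $N^{1/4}$. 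Take $\chi_1,\chi_2\in C^\infty$ with $\chi_1^2+\chi_2^2\equiv1$, $\chi_1\equiv1$ inside the inner ball of $A_{k^\ast}$, $\chi_1\equiv0$ beyond its outer boundary, and $|\D\chi_i|\lesssim\diam(A_{k^\ast})^{-1}$. Since the operators are blind to additive constants, replace $v$ by $v-c$ with $c$ the mean of $v$ over $A_{k^\ast}$; this changes neither $\<Hv,v\>$ ($H:=\ddel\Eqnl(y_N)+\ddel P_N(y_N)$) nor any finite difference, and the discrete Poincaré inequality gives $\|v-c\|_{\ell^2(A_{k^\ast})}\lesssim\diam(A_{k^\ast})\,\|\D v\|_{L^2(A_{k^\ast}')}\lesssim\diam(A_{k^\ast})/\sqrt K$. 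The IMS identity reads
\begin{displaymath}
  \<Hv,v\>=\<H(\chi_1v),\chi_1v\>+\<H(\chi_2v),\chi_2v\>-E_N(v),
\end{displaymath}
with $E_N(v)$ a combination of terms bounded by $\|\D\chi\|_\infty^2\|\D v\|_{L^2(A_{k^\ast})}^2$, $\|\D\chi\|_\infty\|v-c\|_{\ell^2(A_{k^\ast})}$ and $\|\D\chi\|_\infty^2\|v-c\|_{\ell^2(A_{k^\ast})}^2$, hence $\lesssim\diam(A_{k^\ast})^{-2}+1/\sqrt K\to0$ uniformly in $v$. On the first piece $\chi_1\equiv1$ on the defect core, so $\ddel P_N(y_N)(\chi_1v)$ sees only $v$ there, $\ddel\Eqnl(y_N)$ agrees with $\ddel\Ea(y_N)$ near $\supp(\chi_1v)$, and (ii) identifies $\<H(\chi_1v),\chi_1v\>$ with $\<[\ddel\Ea(y)+\ddel P(y)]w,w\>\ge\gamma^{\rm def}\|\D(\chi_1v)\|_{L^2}^2$, $w$ being the un-shifted $\chi_1v$. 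On the second piece $\chi_2\equiv0$ on the defect core, so $\ddel P_N(y_N)(\chi_2v)=0$ and $\<H(\chi_2v),\chi_2v\>=\<\ddel\Eqnl(\mF x)(\chi_2v),\chi_2v\>+\<R_N(\chi_2v),\chi_2v\>\ge\big(\gamma^{\rm qnl}-o(1)\big)\|\D(\chi_2v)\|_{L^2}^2$, the $o(1)$ coming from (i) since $\supp\chi_2\subset\{|\xi+Na_2|\ge\diam(A_{k^\ast})\}$ with $\diam(A_{k^\ast})\to\infty$. Using $\gamma^{\rm def},\gamma^{\rm qnl}>0$ and $\|\D(\chi_1v)\|_{L^2}^2+\|\D(\chi_2v)\|_{L^2}^2=1+O(1/\sqrt K+\diam(A_{k^\ast})^{-2})$ (the same IMS identity applied to the Dirichlet form $\|\D\cdot\|_{L^2}^2$), adding yields $\<Hv,v\>\ge\min\{\gamma^{\rm def},\gamma^{\rm qnl}\}-o(1)$ uniformly over $\|\D v\|_{L^2}=1$, so $\liminf_N\lambda_N\ge\min\{\gamma^{\rm def},\gamma^{\rm qnl}\}$; with the upper bound this is the claimed equality.

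\emph{Main obstacle.} The delicate point is the IMS commutator error: localizing across a transition of width $w$ costs $\lesssim\|\D v\|_{L^2(\text{transition})}^2$, which is only $O(1)$ for a generic normalized $v$. The remedy, and the one genuinely multiscale ingredient, is to exploit the separation of scales between the fixed-size defect core and the interface at distance $\sim N$: the transition must simultaneously sit where $\|\D v\|$ is small (pigeonhole over $K\to\infty$ annuli), enclose a region large enough that $R_N$ is uniformly small outside it (so $\diam(A_{k^\ast})\to\infty$), and remain strictly between core and interface. Once this is arranged — together with the constant-subtraction step converting $\|v\|_{\ell^2}$ on the transition into $\diam(A_{k^\ast})\|\D v\|_{L^2}$ via discrete Poincaré — the rest is routine bookkeeping of finitely many uniformly bounded coefficients.
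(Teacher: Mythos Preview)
Your argument is correct and takes a genuinely different route from the paper's. The paper employs a concentration--compactness decomposition: along a subsequence of near-minimizers $v^{(N)}$ it extracts a weak limit (after recentering at the defect) and splits $v^{(N)}=w^{(N)}+z^{(N)}$ with $\nabla w^{(N)}$ strongly convergent and $\nabla z^{(N)}\rightharpoonup 0$; the diagonal terms are then controlled by $\gamma^{\rm def}$ and $\gamma^{\rm qnl}$ respectively, and the cross term vanishes by weak convergence. Your IMS localization with a pigeonholed transition annulus achieves the same decoupling quantitatively and without passing to subsequences; it even yields an explicit rate in $N$, which the soft compactness argument does not. You also supply the upper bound $\limsup_N\lambda_N\le\min\{\gamma^{\rm def},\gamma^{\rm qnl}\}$, which the paper's proof leaves implicit.

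One presentational point deserves care: the step ``replace $v$ by $v-c$'' literally requires $H$ to annihilate constants, which is not assumed for $\ddel P$. This is harmless, however, because the IMS commutator $E_N$ involves only the $\ddel\Eqnl$-part of $H$ (the cutoffs satisfy $\nabla\chi_k\equiv 0$ on the defect core where $\ddel P$ acts), and $\ddel\Eqnl$ \emph{does} depend only on finite differences. Hence one may invoke the Poincar\'e estimate on the annulus to bound $E_N(v)$ directly, without globally shifting $v$; the main pieces $\<H(\chi_k v),\chi_k v\>$ are then evaluated at $\chi_k v\in\Usz$, to which \eqref{eq:app_stab_a} and the definition of $\gamma^{\rm qnl}$ apply verbatim. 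With this minor rephrasing the argument goes through as written.
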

\begin{proof}
  The proof employs a concentration compactness argument, and is
  similar to the proof of \cite[Theorem 4.8]{EhrOrtSha:2013}.

  Let $H_N := \ddel\Eqnl(y_N) + \ddel P_N(y_N)$ and $\gamma_N :=
  \inf_{\| \D v \|_{L^2} = 1} \< H_N v, v \>$. Let $v^{(N)} \in \Usz$
  such that $\|\D v^{(N)}\|_{L^2} = 1$ and $\< H_N v^{(N)}, v^{(N)} \>
  \leq \gamma_N + 1/N$.

  {\it 1. Decomposition: } We shift $v^{(N)}$ to recentre the
  defect. Since $\| \D v^{(N)}_{-N}\|_{L^2} = \| \D v^{(N)} \|_{L^2} =
  1$, there exists $v : \L \to \R, \D v \in L^2$ such that $\D
  v^{(N_j)}_{-N_j} \rightharpoonup \D v$, weakly in $L^2$, for some
  subsequence $N_j \uparrow \infty$. For the sake of simplicity of
  notation we drop the subscript of $N_j$.

  Lemma 4.9 in \cite{EhrOrtSha:2013} asserts that we may decompose
  $v^{(N)} = w^{(N)} + z^{(N)}$ such that the following properties
  hold: 
  \begin{align}
    \label{eq:inhom_split_1}
    & \D w^{(N)}_{-N} \to \D v \quad \text{strongly in } L^2 \quad
    \text{and} \quad \D z^{(N)}_{-N} \rightharpoonup 0 \quad \text{weakly
      in } L^2; \\
    \label{eq:inhom_split_2}
    & D w^{(N)}_{-N}(\xi) = \cases{D v^{(N)}_{-N}(\xi), &  |\xi| \leq
      R_N, \\
    0, & |\xi| \geq 2 R_N, } 
  \end{align}
  where $R_N \uparrow \infty$, and the speed of convergence of $R_N
  \uparrow \infty$ may be chosen arbitrarily slowly. Note also that
  \eqref{eq:inhom_split_1}, \eqref{eq:inhom_split_2} imply that $\D
  w^{(N)}$ and $\D z^{(N)}$ are bounded in $L^2$.

  Thus, we have
  \begin{align*}
    \gamma_N + 1/N &\geq \< H_N v^{(N)}, v^{(N)} \> \\
    &= \< H_N w^{(N)}, w^{(N)} \> + 2 \< H_N w^{(N)}, z^{(N)} \> + \<
    H_N z^{(N)}, z^{(N)} \> \\
    &=: a_N + b_N + c_N.
  \end{align*}

  {\it 2. Estimate of $a_N$: } Choosing $R_N \ll N$, property
  \eqref{eq:inhom_split_2} implies that
  \begin{displaymath}
    a_N = \< H_N w^{(N)}, w^{(N)} \> = \< [\ddel\Ea(y_N)+\ddel P_N(y_N)] w^{(N)},
    w^{(N)} \> \geq \gamma^{\rm def} \| \D w^{(N)} \|_{L^2}^2,
  \end{displaymath}
  where we have also applied \eqref{eq:app_stab_a} in the last inequality.

  {\it 3. Estimate of $c_N$: } Using the fact that
  $Dz^{(N)}_{-N}(\xi) = 0$
  for $|\xi| \leq R_N$, and the Lipschitz condition
  \eqref{eq:inhom_Lip}, a straightforward calculation yields
  \begin{align*}
    c_N &= \< \ddel \Eqnl(\mF x) z^{(N)}, z^{(N)} \> + \< [\ddel
    \Eqnl(y_N) - \ddel \Eqnl(\mF x)] z^{(N)}, z^{(N)} \> \\
    &\geq \gamma^\qnl \| \D z^{(N)} \|_{L^2}^2
    - C L \| D y - \mF\Rg \|_{\ell^\infty(\L
      \setminus B_{R_N})} \| D z^{(N)} \|_{\ell^2}^2\\
    &\geq \gamma^\qnl \| \D z^{(N)} \|_{L^2}^2 - s_N,
  \end{align*}
  where $s_N \to 0$ as $N \to \infty$ due to the fact that $D y -
  \mF\Rg \in \ell^2(\L)$.

  {\it 4. Estimate of $b_N$: } Recall that we chose $R_N \ll N$;
    hence, the support overlap of $Dw^{(N)}$ and $Dz^{(N)}$ is fully
  contained in the atomistic region.
  Since the coefficients $V_{ij}(Dy(\xi))$ are bounded, it follows
  that
  \begin{displaymath}
    b_N = \sum_{i,j = 1}^6  \sum_{\xi \in \L} V_{i,j}(Dy_{-N}(\xi)) D_i
    w^{(N)}_{-N}(\xi) D_j z^{(N)}_{-N}(\xi) \to 0 \quad \text{as } N
    \to \infty,
  \end{displaymath}
  due to the strong convergence of $D_i w^{(N)}_{-N}$ and hence of $
  V_{i,j}(Dy) D_i w^{(N)}_{-N}$ in $\ell^2$ and the weak convergence
  of $z^{(N)}_{-N}$ to zero.

  {\it 5. Conclusion of proof: } Combining steps 1--4 we obtain that
  \begin{displaymath}
    \liminf_{N \to \infty} \gamma_N \geq \min(\gamma^{\rm def}, \gamma^\qnl)
    \liminf_{N \to \infty} \b( \| \D w^{(N)} \|_{L^2}^2 + \| \D
    z^{(N)} \|_{L^2}^2 \b). 
  \end{displaymath}
  Arguing as in step 4, we obtain $\int \D w^{(N)} \D z^{(N)} \dx \to
  0$ as $N \to \infty$, from which we conclude that
  \begin{align*}
    \liminf_{N \to \infty} \gamma_N &\geq  \min(\gamma^{\rm def}, \gamma^\qnl)
    \liminf_{N \to \infty} \B( \| \D w^{(N)} \|_{L^2}^2 + 2
    {\textstyle \int \D w^{(N)} \D z^{(N)} \dx } +  \| \D
    z^{(N)} \|_{L^2}^2 \B) \\
    & = \min(\gamma^{\rm def}, \gamma^\qnl) \liminf_{N \to \infty} 
    \| \D w^{(N)} + \D z^{(N)} \|_{L^2}^2 \\
    &=  \min(\gamma^{\rm def}, \gamma^\qnl). \qedhere
  \end{align*}
\end{proof}

\bibliographystyle{plain}
\bibliography{qc}
\end{document}